\documentclass[a4paper,11pt]{article}

\usepackage{cmap}

\usepackage[usenames,dvipsnames]{xcolor}
\definecolor{myblue}{rgb}{0.21, 0.34, 0.74}
\definecolor{mygrey}{rgb}{0.55, 0.57, 0.67}
\definecolor{myred}{rgb}{0.79, 0.0, 0.09}

\definecolor{lavender}{HTML}{CEB6EB}
\definecolor{softblue}{HTML}{95C1FF}
\definecolor{softgreen}{HTML}{BBE2B1}

\usepackage[pdftex, colorlinks=true, bookmarksopen=true, linkcolor=blue, citecolor=blue]{hyperref}

\usepackage[T1]{fontenc}
\usepackage[utf8]{inputenc}

\usepackage{lmodern}
\usepackage{libertine}

\usepackage{tikz-cd}

\usepackage{enumitem}
\usepackage{bbm}
\usepackage{mathrsfs}
\usepackage{amssymb}
\usepackage{amsmath}
\usepackage{upgreek}
\usepackage{mathtools}

\usepackage{amsthm}
\usepackage[nottoc,notlot,notlof]{tocbibind}

\usepackage[noabbrev,capitalize,nameinlink]{cleveref}

\usepackage{comment}
\usepackage[font=small,labelfont=bf]{caption}

\theoremstyle{plain}
\newtheorem{theorem}{Theorem}[section]
\newtheorem{definition}[theorem]{Definition}
\newtheorem{proposition}[theorem]{Proposition}
\newtheorem{lemma}[theorem]{Lemma}

\newtheorem{claim}{Claim}
\newtheorem{remark}[theorem]{Remark}

\usepackage{authblk}

\usepackage{subcaption}

\usepackage{tikz}
\usetikzlibrary{calc,intersections}

\usepackage[svgnames,pdf]{pstricks}

\usepackage{wrapfig}
\usepackage{upgreek}
\usepackage{bm}
\usepackage[scr=boondoxo]{mathalfa}
\usepackage{comment}
\usepackage[font=small,labelfont=bf]{caption}

\newcommand{\supp}{\mathrm{supp}}

\newcommand{\R}{\mathbb{R}}

\newcommand{\diff}{\, \mathrm{d}}
\newcommand{\dist}{\mathrm{dist}}

\newcommand{\N}{\mathbb{N}}

\usepackage{esint}  % in preamble

\newcommand{\Id}{\mathsf{id}}

\mathchardef\mhyphen="2D % Define a "math hyphen"

\renewcommand{\leq}{\leqslant}
\renewcommand{\geq}{\geqslant}
\renewcommand{\le}{\leqslant}
\renewcommand{\ge}{\geqslant}

\renewcommand{\rho}{\uprho}

\renewcommand{\mu}{\upmu}

\renewcommand{\nu}{\upnu}

\renewcommand{\beta}{\upbeta}

\renewcommand{\psi}{\uppsi}
\renewcommand{\phi}{\upphi}
\renewcommand{\varphi}{\upvarphi}
\renewcommand{\omega}{\upomega}
\renewcommand{\eta}{\upeta}
\renewcommand{\delta}{\updelta}

\numberwithin{equation}{section}

\title{ Constructive conditional normalizing flows  }

\author{Borjan Geshkovski}
\affil{Laboratoire Jacques-Louis Lions\\ Inria \& Sorbonne Université}
\author{Domènec Ruiz-Balet}
\affil{Universitat de Barcelona}

\date{ \today }

\begin{document}

%------
% Insert the title of your paper and (if necessary)
% a short title for the running head.
%------
\title{ Constructive conditional normalizing flows }

\setlist[itemize,enumerate]{left=0pt}

%
% Title
%

\maketitle

%
% Abstract
%

\begin{abstract}

Motivated by applications in conditional sampling, given a probability measure $\mu$ and a diffeomorphism $\phi$, we consider the problem of simultaneously approximating $\phi$  and the pushforward $\phi_\#\mu$ by means of the flow of a continuity equation whose velocity field is a perceptron neural network with piecewise constant weights. We provide an explicit construction based on a polar-like decomposition of the Lagrange interpolant of $\phi$. The latter involves a compressible component, given by the gradient of a particular convex function, which can be realized exactly, and an incompressible component, which—by after approximating via permutations—can be implemented through shear flows intrinsic to the continuity equation.
For more regular maps $\phi$---such as the Kn\"othe-Rosenblatt rearrangement---, we provide an alternative, probabilistic construction inspired by the Maurey empirical method, in which the number of discontinuities in the weights doesn't scale inversely with the ambient dimension.
			
%\bigskip
%\noindent \textbf{Keywords.}\quad neural ODEs; continuous normalizing flows; optimal transport.

%\medskip
%\noindent \textbf{\textsc{ams} classification.}\quad \textsc{49Q22, 35Q49, 37A05, 37C10, 28D05, 68T07}.
\end{abstract}
	
\thispagestyle{empty}

\setcounter{tocdepth}{3}
\tableofcontents

%------
% INSERT THE BODY OF THE PAPER HERE (except
% acknowledgments, funding info and bibliography)
%------

%
% Intro
%

\section{Introduction}

Consider the controlled ODE
\begin{equation} \label{eq: neural.ode}
\begin{cases}
\dot{x}(t) = v(x(t), \theta(t)), & t\in[0,T],\\
x(0) = x\in\R^d
\end{cases}
\end{equation}
with velocity field given by the \emph{perceptron}\footnote{For simplicity we focus on \emph{width~$1$}. See \Cref{rem: width}.}
\begin{equation} \label{eq: two.layer.mlp}
v(x,\theta) \coloneqq w(a\cdot x+b)_+,
 \hspace{1cm}\theta=(w,a,b)\in\mathbb{R}^{2d+1}.
\end{equation}
Given a control $\theta:[0,T]\to\mathbb{R}^{2d+1}$, we denote by $\phi_\theta^t:\mathbb{R}^d\to\mathbb{R}^d$ the time-$t$ flow map. 
We call a jump discontinuity of~$\theta$ a \emph{switch}. 

One can view \eqref{eq: neural.ode} as the equation of characteristic curves of 
\begin{equation} \label{eq: cont.eq}
\begin{cases}
\partial_t \uprho(t, x) + \nabla_x \cdot (v(x,\theta(t)) \uprho(t,x))=0, & (t,x)\in [0,T]\times\mathbb{R}^d,\\[2pt]
\uprho(0, x) = \uprho_{\mathrm{B}}(x),
\end{cases}
\end{equation}
for a given probability density $\uprho_{\mathrm{B}}$ on $\R^d$.

The continuity equation \eqref{eq: cont.eq}, with the particular choice of velocity field \eqref{eq: two.layer.mlp}, is of central interest in recent applications in machine learning, and goes under the umbrella term of \emph{continuous normalizing flows} \cite{papamakarios2021normalizing, chen2018neural, grathwohl2018ffjord}, whereas \eqref{eq: neural.ode} is called \emph{neural ODE}. In this framework, one typically has access to samples from an unknown density \(\uprho_*\) and chooses a simple \(\uprho_{\mathrm{B}}\) (often a centered Gaussian), which is transported by \eqref{eq: cont.eq} to match \(\uprho_*\) by tuning the controls \(\theta\) so as to minimize a discrepancy (often the relative entropy, due to connections with maximum likelihood estimation). We refer the reader to the introduction in \cite{alvarez2025constructive} for a detailed account. 

In this work, we present a constructive---though not necessarily optimal---approach to solving tasks of this nature. 
Along the way, we look to elucidate some working mechanisms of deep neural networks.

For a probability density $\rho$ on  $\mathbb{R}^d$ we denote the measure by $\mu(A)\coloneqq\int_A \rho(x)\diff x$ for Borel measurable $A$. 
For a Borel measurable map $\psi:\R^d\to\R^d$ we write $\psi_\#\mu$ for the pushforward. For finite measures $\nu$ we use the total variation $|\nu|_{\mathsf{TV}} \coloneqq \sup_{\|f\|_{L^\infty}\le 1}\int f \diff\nu,$ so that $|\mu-\nu|_{\mathsf{TV}}=2\sup_{A}|\mu(A)-\nu(A)|$. Finally, we say that a domain $\Omega\subset\R^d$ is rectangular if it is the union of finitely many bounded orthotopes.

Our first result is the following.

\begin{theorem} \label{thm: gen.thm}
Fix $p\geq1$,  a $C^1$ diffeomorphism $\phi:\R^d\to\R^d$ and a probability density $\rho\in L^\infty\cap C^{0,1}(\R^d)$. 
Then for every rectangular domain $\Omega\subset\R^d$ and  $\varepsilon>0$ there exists a piecewise constant $\theta:[0,T]\to\mathbb{R}^{2d+1}$ with finitely many switches such that 
\begin{equation}\label{eq: Lp.small}
\left\|\phi-\phi_\theta^T\right\|_{L^p(\Omega)}\le \varepsilon
\end{equation}
and
\begin{equation}\label{eq: TV.small}
\left|\phi_\#\mu-\phi_{\theta\#}^T\mu\right|_{\mathsf{TV}(\Omega)}\le \varepsilon.
\end{equation}
\end{theorem}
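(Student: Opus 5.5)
We follow the strategy announced in the abstract: discretize $\phi$ on $\Omega$, factor the resulting piecewise map into a compressible part and an incompressible part, and realize each part by flows of \eqref{eq: neural.ode}. The two estimates are then glued together by the triangle inequality.

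\textbf{Step 1 (discretization).} Partition $\Omega$ into a fine grid of cubes $\{Q_i\}$ of side $h$. Let $\phi_h$ be the piecewise affine (Lagrange) interpolant of $\phi$ on a simplicial refinement. Since $\phi\in C^1$, we have $\|\phi-\phi_h\|_{L^\infty(\Omega)}\to0$ and $\nabla\phi_h\to\nabla\phi$ uniformly. Because $\det\nabla\phi\neq0$ on $\overline\Omega$, the map $\phi_h$ is a bi-Lipschitz homeomorphism onto its image for $h$ small. The pushforward densities $\rho\circ\phi^{-1}|\det\nabla\phi^{-1}|$ and their $\phi_h$ analogues are then $L^1$-close, using that $\rho$ is Lipschitz and bounded. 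This reduces both \eqref{eq: Lp.small} and \eqref{eq: TV.small} to approximating $\phi_h$.

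\textbf{Step 2 (polar-like decomposition).} Write $\phi_h=\sigma\circ\nabla u$. Here $\nabla u$ is the gradient of a convex piecewise affine function, chosen as the Brenier map sending $\mu|_\Omega$ to a measure with the same cube masses as $\phi_{h\#}\mu$, up to rearrangement. The map $\sigma$ is measure preserving and, after a further discretization, is approximated by a permutation of small cubes of equal mass.

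\emph{Compressible part.} A convex piecewise affine $u=\max_j(a_j\cdot x+b_j)$ can be built by composing elementary ReLU flows. Each flow $\dot x=w(a\cdot x+b)_+$ with $w\parallel a$ on a half-space acts as an explicit affine stretch along $a$. Composing finitely many such flows gives $\nabla u$ up to an error in $L^p$ and $\mathsf{TV}$ that is controlled by $h$.

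\emph{Incompressible part.} Any permutation of cubes decomposes into transpositions of adjacent cubes. Each adjacent swap is implemented, approximately, by a finite sequence of shear flows. Taking $w\perp a$ makes the field divergence free, so it preserves Lebesgue measure and moves only a thin slab. This realizes a swap in measure, with exceptional sets of arbitrarily small volume. Since $\rho\in L^\infty$, these exceptional sets carry small $\mu$-mass, which yields the $\mathsf{TV}$ bound. The $L^p$ bound follows because all points remain in a bounded set, so small-measure errors are small in $L^p(\Omega)$.

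\textbf{Step 3 (concatenation).} Concatenating the controls in time gives a piecewise constant $\theta$ with finitely many switches. Errors accumulate additively, with two tools:
\begin{itemize}
\item for $\mathsf{TV}$, the bound $|\psi_\#\alpha-\psi_\#\beta|_{\mathsf{TV}}\le|\alpha-\beta|_{\mathsf{TV}}$;
\item for $L^p$, the Lipschitz constants of the later flows, which are finite for fixed $\theta$.
\end{itemize}
Because the Lipschitz constants of later flows multiply earlier $L^p$ errors, the tolerances must be fixed in reverse order.

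\textbf{Main obstacle.} The hardest step is the incompressible part. Shears must swap cubes exactly in mass while leaving all other cubes essentially undisturbed. Moreover, the decomposition $\phi_h=\sigma\circ\nabla u$ must be exact in the pointwise ($L^p$) sense, not only in distribution. To handle this, I would first try choosing the target cube masses so that $\sigma$ maps each cube onto $\phi_h$ of a cube. Then $\sigma\circ\nabla u$ differs from $\phi_h$ only by an affine map inside each cube, which is $O(h)$ in $L^\infty$.
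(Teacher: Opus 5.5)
\textbf{There is a genuine gap: the compressible factor, which is exactly where \eqref{eq: TV.small} is hard.}

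Taken literally, $u=\max_j(a_j\cdot x+b_j)$ has a piecewise \emph{constant} gradient. It is the semi-discrete Brenier map onto an atomic measure with prescribed cube masses, and it collapses cells to points. So $\sigma\circ\nabla u$ cannot equal the homeomorphism $\phi_h$. Nor can $\nabla u$ be the time-$T$ map of \eqref{eq: neural.ode}, which for piecewise constant $\theta$ is always a bi-Lipschitz homeomorphism.

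Suppose instead you want $\nabla u$ bijective. An exact factorization $\phi_h=\sigma\circ\nabla u$ with $\sigma$ Lebesgue-preserving forces $\det D^2u=|\det\nabla\phi_h|$ a.e. That is a Monge--Amp\`ere equation with piecewise constant right-hand side, and its solution is a genuinely $d$-dimensional compressible map, in general not piecewise affine. Nothing in your sketch shows that compositions of stretches $\dot x=w(a\cdot x+b)_+$ with $w\parallel a$ reproduce it.

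Your sentence claiming an error ``in $L^p$ and $\mathsf{TV}$ controlled by $h$'' restates the original problem. $\mathsf{TV}$ of pushforwards is governed by Jacobians, and $L^p$ (even $L^\infty$) closeness does not control them; see the example $\psi_h$ in \Cref{sec: one.other.ineq}. Your fallback in the last paragraph fails for the same reason. If $\sigma\circ\nabla u$ and $\phi_h$ merely map each cell onto the same image, then $\eta=\phi_h^{-1}\circ\sigma\circ\nabla u$ is $O(h)$-close to the identity in $L^\infty$. Still, $\int|\rho(\eta)|\det\nabla\eta|-\rho|$ can be of order one unless $|\det\nabla\eta|$ is $L^1$-close to $1$, i.e.\ unless $\nabla u$ has the prescribed Jacobian on each cell. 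You give no mechanism that produces this.

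\textbf{The missing idea is to separate the cells \emph{before} compressing them, so that the compressible factor becomes one-dimensional and can be realized exactly.} In \Cref{prop: lagrange.decomposition} this is done in three steps.
\begin{enumerate}
\item A piecewise affine, determinant-one map $m_1$ sends each simplex $\triangle_j$ of the Lagrange triangulation onto an equal-volume simplex $\triangle_j'$ in a far-away tower. The $x_1$-projections $I_j$ of these simplices are pairwise disjoint.
\item The map $g(x)=(\psi'(x_1),x_2,\dots,x_d)$, with $\psi''=\lambda_j\coloneqq|\phi_{\mathscr L}(\triangle_j)|/|\triangle_j|$ on $I_j$, gives each simplex exactly its target volume. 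It is realized \emph{exactly} by finitely many ReLU slope-change flows with $a,w$ along $e_1$ (\Cref{lem: exact.linear.nonuniform}). These are your stretches, but along a single fixed direction.
\item A second determinant-one piecewise affine map $m_2$ sends each $g(\triangle_j')$ onto $\phi_{\mathscr L}(\triangle_j)$ vertex by vertex.
\end{enumerate}
Then $\phi_{\mathscr L}=m_2\circ g\circ m_1$ holds exactly. All approximation error lives in the two Lebesgue-preserving factors, and there $L^p$-closeness is converted into $\mathsf{TV}$ control by a stability estimate (\Cref{thm: stability.measure.preserving}).

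You need such an estimate in your incompressible step as well. Showing that the shear flow agrees with the cube permutation off a set of small measure only compares the flow with the permutation. The $\mathsf{TV}$ comparison between the permutation and $\sigma$ itself is not a small-measure argument, and your proposal does not address it.
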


Ensuring both \eqref{eq: Lp.small} and \eqref{eq: TV.small} has its own intrinsic significance, but it is also motivated by  \emph{conditional sampling}. In many  applications stemming from the physical sciences, one has samples from a probability density $\rho_*:\R^{d_1+d_2}\to\R_{\geq0}$ and wishes to sample from the density $\R^{d_1}\ni x\mapsto\rho^{x|y}_*(x)$ of the conditional distribution $\mu_*(x\,|\,y)$ \cite{ingraham2023illuminating, stuart2010inverse}. In \cite[Theorem 2.4]{baptista2024conditional}, the authors establish that a transportation-of-measure-perspective of this task can be formulated by imposing structure on the transport map. Take $\rho_{\mathrm{B}}=\rho_{\mathrm{B}}^1\otimes\rho_{\mathrm{B}}^2$ with $\rho_{\mathrm{B}}^1$ and $\rho_{\mathrm{B}}^2$ being probability densities on $\R^{d_1}$ and $\R^{d_2}$ respectively.
If $\phi$ is of the form 
\begin{equation} \label{eq: triangular.map}
\phi(x, y) = \begin{bmatrix}
    \phi_1(y)\\
    \phi_2(x,\phi_1(y))
\end{bmatrix}   
\end{equation}
and $\phi_{\#}\mu_{\mathrm{B}}=\mu_*$, then for $\phi_{1\#}\mu_{B}^1$-almost every $y$, it holds $\phi_{2\#}\mu_{B}^2=\mu_*(\cdot\,|y)$. The map $\phi_2$ is referred to as the conditional sampler. 
Triangular transport maps as \eqref{eq: triangular.map} can be shown to exist under minimal assumptions---the canonical example is the Kn\"othe-Rosenblatt rearrangement \cite[Section 1]{villani2009optimal}. This perspective has spurred significant activity in the approximation and construction of such maps \cite{
baptista2024conditional2,%
BaptistaMorrisonZahmMarzouk2024JMLR,%
ParnoMarzouk2018TMAMCMC,%
SpantiniBigoniMarzouk2018JMLR,%
ZechMarzouk2022CAI,%
ZechMarzouk2022CAII,%
BrennanBigoniZahmSpantiniMarzouk2020NeurIPS,%
MarzoukMoselhyParnoSpantini2016HOUQ,%
BaptistaHosseiniNguyenZhang2025KRSoftOT,%
BaptistaHosseiniKovachkiMarzoukSagiv2023ApproxTheory,%
RamgraberSharpLeProvostMarzouk2025FriendlyIntro%
}. 
\Cref{thm: gen.thm} implies that any such transport map $\phi$ can be realized, to arbitrary accuracy, as the time-$T$ flow of \eqref{eq: cont.eq} while retaining control of the joint pushforward. 

The number of switches in \Cref{thm: gen.thm}, while finite, scales at best as $1/\varepsilon^d$ for an arbitrary $\phi$---this is due to the "worst-case" nature of the result. We discuss this further in \Cref{sec: switches}.
In contrast, our second result provides a construction that scales as $1/\varepsilon^2$ for specific diffeomorphisms $\phi$. 

Let $\mathrm{Diff}^1(\R^d)$ be the space of $C^1$ diffeomorphisms, and for $s>d/2+1$ define
\begin{equation*}
    \mathcal{D}^s(\R^d)\coloneqq\left\{\phi\in\mathrm{Diff}^1(\R^d)\colon\phi-\mathsf{id}\in H^s(\R^d;\mathbb{R}^d)\right\}.
\end{equation*}
Denote by $\mathcal D^s_0$ the \emph{connected component of the identity}
\begin{equation*}
\mathcal D^s_0(\R^d)
\coloneqq\left\{\phi^1_u\colon  \partial_t\phi^t=u(t)\circ \phi^t_u,\, \phi^0_u=\mathsf{id},\, u\in L^1([0,1];H^s(\R^d;\R^d))\right\},  
\end{equation*}
which is well-defined by virtue of \cite[Theorem 4.4]{bruveris2017completeness}. All these definitions extend to the setting of bounded Lipschitz domains \cite{bruveris2017completeness}. 

Our second result is the following.  

\begin{theorem} \label{thm:sobolev_to_switches_TV} \label{thm:sobolev_to_switches}
Fix $s>d/2+2$, a diffeomorphism $\phi\in \mathcal{D}^s_0(\R^d)$ with $\mathsf{A}_s(\phi)<\infty$, and a probability density
$\rho\in L^\infty\cap C^{0,1}(\R^d)$ with bounded support. Then for every $\varepsilon>0$ and every bounded domain $\Omega\subset\R^d$ such that $\supp\,\rho\subset\Omega$, there exists a piecewise constant 
$\theta:[0,1]\to\R^{2d+1}$ with at most $N$ switches such that
\begin{equation*}
\|\phi-\phi_\theta^1\|_{L^2(\Omega)}\le \varepsilon,
\end{equation*}
and
\begin{equation*}
\left|\phi_\#\mu-\phi_{\theta\#}^1\mu\right|_{\mathsf{TV}(\Omega)}\le \varepsilon.
\end{equation*}
Moreover one can choose $N$ so that
\[
N\le
\frac{C e^{C\sqrt{\mathsf{A}_s(\phi)}} \mathsf{A}_s(\phi)}{\varepsilon^2},
\]
where $C=C(d,s,\Omega,\rho)>0$ and
\[
\mathsf{A}_s(\phi)
\coloneqq \inf\left\{
\int_0^1 \|u(t,\cdot)\|_{H^s}^2\diff t \colon
u\in L^2([0,1];H^s(\mathbb{R}^d;\mathbb{R}^d)),\ \phi_u^1=\phi
\right\},
\]
with $\phi_u^1$ denoting the time--$1$ flow of the vector field $u(t,\cdot)$.
\end{theorem}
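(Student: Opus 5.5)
We pick a near-optimal velocity field for $\mathsf{A}_s(\phi)$ and replace it, on short time slices, by a Monte Carlo (Maurey-type) average of perceptrons, each realized sequentially by switching. Fix $u\in L^2([0,1];H^s)$ with $\phi_u^1=\phi$ and $\int_0^1\|u\|_{H^s}^2\le 2\mathsf{A}_s(\phi)$.

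\textbf{Step 1: Barron representation of $u(t,\cdot)$.} Since $s>d/2+2$, Fourier inversion together with a Cauchy--Schwarz bound on $\int|\xi|^2|\hat u|\diff\xi$ shows that $u(t,\cdot)$ has finite spectral Barron norm of order two, controlled by $C\|u(t)\|_{H^s}$. The classical ReLU integral representation (Barron / Klusowski--Barron, second-order Taylor remainder) then gives, on the bounded set $\Omega'$ containing all trajectories from $\Omega$,
\[
u(t,x)=\int w\,(a\cdot x+b)_+\diff\pi_t(w,a,b)+\text{affine part},
\]
with $\pi_t$ a probability measure on $|a|=1$, $|b|\le R$ and $|w|\lesssim\|u(t)\|_{H^s}$. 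The affine part is itself a sum of $O(d)$ ReLUs on $\Omega'$, since $(z)_+-(-z)_+=z$.

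\textbf{Step 2: sampling and splitting.} Sample $\theta_1,\dots,\theta_N$ i.i.d. in space-time, with time weighted by $\|u(t)\|_{H^s}$. Replace the flow of $u$ by the concatenation, in order, of flows of the single perceptrons $N w_k(a_k\cdot x+b_k)_+$, each run for time $1/N$ after renormalizing by a Maurey importance weight. This is piecewise constant with $N$ switches. We then compare:
\begin{enumerate}
\item the exact flow of $u$;
\item the flow of the averaged field on each slice;
\item the Lie-splitting product.
\end{enumerate}
Stability of the $x$-flows costs $\exp(\int\|\nabla u\|_\infty)\le e^{C\sqrt{\mathsf{A}_s}}$ by Gr\"onwall and Sobolev embedding. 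The random error is a martingale in the Lagrangian variable, so its $L^2(\Omega\times\mathbb{P})$ size is $\lesssim e^{C\sqrt{\mathsf A_s}}\sqrt{\mathsf{A}_s/N}$. Choosing $N\sim e^{C\sqrt{\mathsf A_s}}\mathsf A_s/\varepsilon^2$ and picking a realization below the mean gives the $L^2$ bound.

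\textbf{Step 3: TV bound.} This is the main obstacle, since a small $L^2$ error between maps does not control TV of the pushforwards. We plan to push the densities forward through the continuity equation and estimate $\|\rho_u(t)-\rho_\theta(t)\|_{L^1}$ by a Duhamel/stability argument that uses the Lipschitz regularity of $\rho$, $\nabla\cdot u\in L^\infty$, and ReLU divergences being bounded measures. Concretely, the pushforward densities are $\rho\circ\psi^{-1}\,|\det D\psi^{-1}|$, so we must also control $\|D\phi-D\phi_\theta\|_{L^1}$.

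For the ReLU flows, $D\phi_\theta$ is piecewise smooth, with jumps across hyperplanes. We would estimate the Jacobian error by a second Maurey sampling argument at the level of the linearized equation $\dot J=\nabla v\,J$. The sampled fields $\nabla v=w\otimes a\,\mathbf{1}_{a\cdot x+b>0}$ are bounded, so the same martingale variance estimate applies. This step is where we need $s>d/2+2$ rather than $d/2+1$: it makes $\nabla u$ Barron and keeps the Gr\"onwall constant at $e^{C\sqrt{\mathsf A_s}}$. If this route fails, the fallback is to write $\rho=\sum$ (a smoothed version of $\rho$) and to bound TV by $L^1$ via the Lipschitz continuity of $\rho$ and the change of variables, accepting only constant losses absorbed into $C(d,s,\Omega,\rho)$. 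A union of the two failure events then yields a single realization satisfying both bounds.
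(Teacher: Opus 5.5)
Your architecture matches the paper's. You pick a near-optimal $u$, represent $u(t,\cdot)$ as a ReLU integral on a ball containing the trajectories, and use one sampled neuron per time slice. A martingale plus discrete Gr\"onwall recursion then gives $N^{-1/2}$ for the maps. A second recursion on the same samples gives Jacobian information, which is converted into $\mathsf{TV}$ via $\eta=\phi^{-1}\circ\psi$.

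You differ in two places. Obtaining the representation from Klusowski--Barron (unit $a$, bounded $b$, affine part written with ReLUs) instead of $H^s\hookrightarrow\mathscr S_2\hookrightarrow\mathscr B_1$ is fine. More substantively, you track the full matrix $J=\nabla_x Y$ through $\dot J=\nabla v\,J$. The paper tracks only $q=\log\det\nabla_x Y$, which obeys $\partial_t q=\nabla\cdot v$ along trajectories. Since $\mathsf{TV}$ only sees the determinant (\Cref{lem:TV_stability_diffeo}), the paper's choice is leaner:
\begin{itemize}
\item the recursion is scalar and additive;
\item the sampled increment $r_kh_{\theta_k}(Y_{k-1})$ has conditional mean exactly $\int_{I_k}\nabla\cdot u(t,\cdot)\diff t$;
\item only $\nabla\cdot u$ needs to be Lipschitz.
\end{itemize}
Your matrix version can also be closed. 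The sampled factors are deterministically bounded, and replacing $\exp(r_kH)$ by $I+r_kH$ costs a bias $O(r_k^2)$, which sums to $O(N^{-1})$. It yields the stronger conclusion $D\phi_\theta\approx D\phi$, at the price of non-commutative bookkeeping.

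What is missing is the step that makes either Jacobian recursion close on the sampled side. The sampled gradients $\nabla v=(w'\otimes a)\mathbf 1_{\{a\cdot x+b>0\}}$ are bounded but discontinuous. So the within-slice remainder $\int_{I_k}\big(\nabla u_N(t,Y(t,x))J_Y(t,x)-\nabla u_N(t,Y_{k-1}(x))J_Y(t_{k-1},x)\big)\diff t$ cannot be bounded by a Lipschitz estimate, as $R_k^Y$ is in the map recursion. Boundedness only handles the variance term.

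The needed observation is that $z(t)=a_k\cdot Y(t,x)+b_k$ solves $z'=(a_k\cdot w_k')z_+$, so no trajectory crosses the activation hyperplane during $I_k$. Hence $\nabla u_N(t,Y(t,x))$ is constant on $I_k$ and the increments are exact: $J_Y(t_k)=\exp(r_kH_{\theta_k}(Y_{k-1}))J_Y(t_{k-1})$ with $H_\theta\coloneqq(w\otimes a)\mathbf 1_{\{a\cdot x+b>0\}}/c(\theta)$, and $q_Y(t_k)=q_Y(t_{k-1})+r_kh_{\theta_k}(Y_{k-1})$. This is the ReLU-specific point the paper's argument rests on.

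You should also discard your two alternatives, since neither works:
\begin{itemize}
\item An Eulerian Duhamel bound in $L^1$ only gives $O(\|r\|_{L^1})$. The difference $u_N-u$ has size $Nr_k$ pointwise, and its smallness comes only from cancellations that are visible in Lagrangian variables.
\item Bounding $\mathsf{TV}$ through displacement and the Lipschitz continuity of $\rho$ alone is exactly what the examples in \Cref{sec: one.other.ineq} rule out.
\end{itemize}
Finally, draw one independent $\theta_k$ per slice from $M_k/r_k$, rather than sampling i.i.d.\ in space-time and then sorting. Otherwise the adapted martingale structure of the error recursion is lost.
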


All $\phi\in\mathcal{D}_0^s(\R^d)$ satisfy $\mathsf{A}_s(\phi)<\infty$.
Indeed, by definition
$\phi=\phi_u^1$ for some $u\in L^1([0,1];H^s(\R^d;\R^d))$. After a
reparameterization of time by the $H^s$-arc length of the path, one may assume
that $\|u(t)\|_{H^s}$ is constant for a.e.\ $t\in[0,1]$. In particular,
$u\in L^\infty([0,1];H^s)\subset L^2([0,1];H^s)$.
On the other hand, $\mathcal D_0^s(\R^d)$ is a proper subclass of the
orientation-preserving $C^1$ diffeomorphisms of $\R^d$. The obstruction is
already visible in the definition: the condition
$\phi-\mathsf{id}\in H^s(\R^d;\R^d)$ forces the displacement to decay at
infinity in a Sobolev sense. In particular, a nontrivial translation
$\phi(x)=x+\tau$, with $\tau\in\R^d\setminus\{0\}$, satisfies
$\phi-\mathsf{id}\equiv \tau\notin H^s(\R^d;\R^d)$, so
$\phi\notin \mathcal D_0^s(\R^d)$.

One map that does fit in the setting of the previous theorem is the \emph{Knöthe-Rosenblatt rearrangement}.

\begin{definition} \label{def:KR}
Let $\rho_0,\rho_1$ be probability densities on $(0,1)^d$.
For $k=1,\dots,d$ write $x_{1:k}\coloneqq (x_1,\dots,x_k)$ and define the marginal densities
\[
\rho_i^{1:k}(x_{1:k})
\coloneqq
\int_{(0,1)^{d-k}}\rho_i(x_{1:k},x_{k+1},\dots,x_d)\,\diff x_{k+1}\cdots \diff x_d,
\qquad
\rho_i^{1:0}\equiv 1.
\]
For $k\ge 1$, define the conditional density of $x_k$ given $x_{1:k-1}$ and the associated
conditional distribution function by
\begin{align*}
&\rho_i^{k\,|\,1:k-1}(x_k\,|\,x_{1:k-1})
\coloneqq
\frac{\rho_i^{1:k}(x_{1:k})}{\rho_i^{1:k-1}(x_{1:k-1})},
\\
&F_i^k(t\,|\,x_{1:k-1})
\coloneqq
\int_0^t \rho_i^{k\,|\,1:k-1}(s\,|\,x_{1:k-1})\diff s .
\end{align*}
Assume that for every $k$ and every $x_{1:k-1}\in(0,1)^{k-1}$ the map
$t\mapsto F_i^k(t\,|\,x_{1:k-1})$ is strictly increasing from $(0,1)$ onto $(0,1)$, and denote by
$(F_i^k(\cdot\,|\,x_{1:k-1}))^{-1}$ its inverse in the first variable.
The Kn\"othe--Rosenblatt rearrangement $\phi_{\mathrm{KR}}:(0,1)^d\to(0,1)^d$ pushing $\mu_0$ to $\mu_1$
is the map $\phi_{\mathrm{KR}}=(\phi_1,\dots,\phi_d)$ defined recursively by
\[
\phi_1(x_1)\coloneqq (F_1^1)^{-1}\left(F_0^1(x_1)\right),
\]
and, for $k=2,\dots,d$,
\[
\phi_k(x_{1:k})
\coloneqq
\left(F_1^k(\,\cdot\,|\,\phi_{1:k-1}(x_{1:k-1}))\right)^{-1}
\left(F_0^k(x_k\,|\,x_{1:k-1})\right),
\]
where $\phi_{1:k-1}(x_{1:k-1})\coloneqq(\phi_1(x_1),\dots,\phi_{k-1}(x_{1:k-1}))$.
Then $\phi_{\mathrm{KR}}$ is increasing in $x_k$ for each fixed $x_{1:k-1}$ and satisfies
$(\phi_{\mathrm{KR}})_\#\mu_0=\mu_1$.
\end{definition}

In \Cref{sec: proof.prop.kr} we show the following.

\begin{proposition}\label{prop:KR.in.D}
Let $d\ge 1$ and $s>d/2+1$.

\begin{enumerate}
\item  
Suppose $\rho_0,\rho_1\in C^{s}([0,1]^d)$ are positive on $[0,1]^d$.
Let $\phi_{\mathrm{KR}}:(0,1)^d\to (0,1)^d$ be the
Knöthe--Rosenblatt rearrangement pushing $\mu_0$ to $\mu_1$.
Then $\phi_{\mathrm{KR}}$ is a $C^{s}$ diffeomorphism satisfying $\phi_{\mathrm{KR}}\in \mathcal D_0^s((0, 1)^d)$ and $\mathsf{A}_s(\phi_{\mathrm{KR}})<\infty$.

\item Let $\phi: (0,1)^d\to (0,1)^d$ be a triangular map of class $C^{s}$:
\[
\phi(x_1,\dots,x_d)=\begin{bmatrix}
 \phi_1(x_1)\\
 \phi_2(x_1,x_2)\\
 \vdots\\
 \phi_d(x_1,\dots,x_d)
\end{bmatrix}
\]
such that for every $k$ and every $x_{1:k-1}\in [0,1]^{k-1} $, the function
$x_k\mapsto \phi_k(x_{1:k-1},x_k)$ is strictly increasing and maps $(0,1)$ onto $(0,1)$.
Then $\phi$ is a $C^{s}$ diffeomorphism satisfying $\phi\in \mathcal D_0^s((0,1)^d)$ and $\mathsf{A}_s(\phi)<\infty$.
\end{enumerate}
\end{proposition}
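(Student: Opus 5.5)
The plan is to reduce part 1 to part 2, and to prove part 2 by an explicit isotopy to the identity built from straight-line (convex) interpolation, which is compatible with the triangular structure.

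\emph{Reduction of part 1 to part 2.} If $\rho_0,\rho_1\in C^s$ are positive on the compact cube, then every marginal $\rho_i^{1:k}$ is $C^s$ and bounded below. Hence each conditional distribution function $F_i^k(t\,|\,x_{1:k-1})$ is $C^s$ in all variables, and $\partial_t F_i^k=\rho_i^{k|1:k-1}\ge c>0$. By the implicit function theorem, $(F_1^k(\cdot\,|\,y))^{-1}$ is $C^s$ jointly in its argument and in $y$. Composing along the recursion in \Cref{def:KR} shows inductively that each $\phi_k$ is $C^s$. Each $\phi_k$ is also strictly increasing in $x_k$ and maps $(0,1)$ onto $(0,1)$, since $F_0^k$ and $F_1^k$ do. So $\phi_{\mathrm{KR}}$ satisfies the hypotheses of part 2, up to continuity of all objects on the closed cube.

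\emph{Isotopy for part 2.} I will set
\[
\phi^\tau\coloneqq(1-\tau)\,\mathsf{id}+\tau\,\phi,\qquad \tau\in[0,1].
\]
Componentwise, $\phi^\tau_k(x_{1:k})=(1-\tau)x_k+\tau\phi_k(x_{1:k})$ is triangular. It is strictly increasing in $x_k$ with derivative at least $(1-\tau)+\tau\,\partial_k\phi_k>0$. It also fixes the endpoints, since $\phi_k(x_{1:k-1},0)=0$ and $\phi_k(x_{1:k-1},1)=1$ by monotonicity and surjectivity. The Jacobian is triangular with positive diagonal, so each $\phi^\tau$ is a $C^s$ diffeomorphism of the cube onto itself. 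By compactness its inverse is $C^s$ with bounds uniform in $\tau$, because $\partial_k\phi_k\ge c>0$ on $[0,1]^d$. I will then define the Eulerian velocity
\[
u(\tau)\coloneqq\partial_\tau\phi^\tau\circ(\phi^\tau)^{-1}=(\phi-\mathsf{id})\circ(\phi^\tau)^{-1}.
\]
This lies in $C^{s}$, and therefore in $H^s((0,1)^d)$, on the bounded domain, uniformly in $\tau$. Hence $u\in L^\infty([0,1];H^s)\subset L^2\cap L^1$ and $\phi^1_u=\phi$. This gives $\phi\in\mathcal D^s_0$, and $\mathsf{A}_s(\phi)\le\sup_\tau\|u(\tau)\|_{H^s}^2<\infty$. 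The same computation gives $\phi-\mathsf{id}\in H^s$.

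\emph{Main obstacle.} The delicate point is the boundary. The flows in \cite{bruveris2017completeness} on a bounded Lipschitz domain require the velocity to be tangent to $\partial(0,1)^d$, or to vanish there, so that the domain is preserved. Here $u(\tau)$ vanishes on each face $\{x_k\in\{0,1\}\}$ in its $k$th component, because $\phi^\tau_k$ fixes that face. The other components need not vanish there, but they are tangential, so the velocity is only tangent to the boundary, not zero on it. I would first check that tangency suffices in the framework of \cite{bruveris2017completeness}. If it does not, I would compose with a cutoff near the boundary or use an extension argument. A secondary technical point is the meaning of $C^s$ for non-integer $s$ and the embedding $C^s\hookrightarrow H^s$ on the cube: I will read $C^s$ as $C^{\lceil s\rceil}$, or as a H\"older space $C^{\lfloor s\rfloor,\alpha}$ with $\alpha>s-\lfloor s\rfloor$, so that Sobolev regularity of the compositions follows from the chain rule and uniform bounds on $(\phi^\tau)^{-1}$.
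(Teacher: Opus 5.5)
Your plan follows the paper's proof. Both use the straight-line interpolation $\phi^\tau=(1-\tau)\mathsf{id}+\tau\phi$ inside the class of triangular maps, the Eulerian velocity $u(\tau)=(\phi-\mathsf{id})\circ(\phi^\tau)^{-1}$ bounded in $L^\infty([0,1];H^s)$, and implicit-function regularity for the KR map. The paper does part 1 directly and then says part 2 follows by the same argument, which is equivalent to your reduction. Two points need attention.

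\textbf{The boundary.} The question you leave open is settled in the paper by the extension route you list as a fallback, and with it tangency plays no role.
\begin{itemize}
\item Extend $u(\tau)$ by a bounded linear (Calder\'on--Stein) extension operator $E$ to $\widetilde u(\tau)\in H^s(\R^d)$.
\item Since $s>d/2+1$, each $\widetilde u(\tau)$ is $C^1$ and globally Lipschitz, with constant bounded in $\tau$.
\item The curve $\tau\mapsto\phi^\tau(x)$ stays in $(0,1)^d$, where $\widetilde u=u$, so it solves $\dot X=\widetilde u(\tau,X)$.
\item By uniqueness for this ODE, the $\R^d$-flow of $\widetilde u$ restricts to $\phi^\tau$ on the cube.
\end{itemize}
This extension is also what the definition of $\mathsf{A}_s$ requires, since $\mathsf{A}_s$ is an infimum over fields in $L^2([0,1];H^s(\R^d;\R^d))$. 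Your bound should therefore read $\mathsf{A}_s(\phi)\le\|E\|^2\sup_\tau\|u(\tau)\|_{H^s((0,1)^d)}^2$.

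\textbf{The lower bound on the diagonal derivatives.} You use $\partial_k\phi_k\ge c>0$ on $[0,1]^d$ in two places: to make $\phi^1=\phi$ a diffeomorphism, and to get $C^s$ bounds on $(\phi^\tau)^{-1}$ that are uniform in $\tau$. This does not follow from the hypotheses of part 2. The map $x\mapsto\frac12+4(x-\frac12)^3$ is smooth, strictly increasing, and maps $(0,1)$ onto $(0,1)$. Its inverse is not Lipschitz, so it is neither a diffeomorphism nor in $\mathcal D^s_0$.

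So part 2 only holds with positive diagonal derivatives on the closed cube as an extra hypothesis. The paper's proof makes the same tacit assumption when it says ``triangular strict monotonicity gives that $\phi$ is a $C^s$ diffeomorphism''.

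For part 1 the bound is genuinely available. It comes from the diagonal formula
$\partial_{x_k}(\phi_{\mathrm{KR}})_k=\rho_0^{k\,|\,1:k-1}(x_k\,|\,x_{1:k-1})/\rho_1^{k\,|\,1:k-1}\bigl((\phi_{\mathrm{KR}})_k\,|\,(\phi_{\mathrm{KR}})_{1:k-1}\bigr)$,
a ratio of continuous conditional densities bounded above and below on $[0,1]^d$. Your reduction should state this explicitly. It is this bound, not strict monotonicity, that places $\phi_{\mathrm{KR}}$ inside the corrected part 2.
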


The assumption that $x_k\mapsto \phi_k(x_{1:k-1},x_k)$ is strictly increasing ensures that the map is orientation-preserving.

\subsection{Overview of the proofs and outline}

The proofs of \Cref{thm: gen.thm} and \Cref{thm:sobolev_to_switches} rely on different ideas. 

\Cref{thm: gen.thm} is proved by an explicit geometric construction.
We first approximate the target diffeomorphism $\phi$ (along with $\phi^{-1}$, $\nabla\phi$ and $\nabla\phi^{-1}$) on $\Omega$ by a  Lagrange interpolant $\phi_\varepsilon$ (\Cref{thm: lagrange.approx}).
A key step is then a polar-like factorization of $\phi_\varepsilon$ (\Cref{prop: lagrange.decomposition}) into three elementary components:
two measure-preserving maps $m_{1\varepsilon},m_{2\varepsilon}$ and a simple compressible map $g_\varepsilon=\nabla\varphi$ that acts only on one coordinate and is piecewise affine and monotone.
The compressible factor $g_\varepsilon$ can be implemented exactly by a flow of \eqref{eq: neural.ode} with piecewise-constant parameters (\Cref{lem: exact.linear.nonuniform}).
The measure-preserving factors are handled by approximating an arbitrary measure-preserving map by a permutation of small cubes, and then realizing that permutation as a finite composition of explicit divergence-free ``swap'' flows (\Cref{lem: univ.approx.flow} and \Cref{lem: mp.neural.ode.swap}), very much inspired by the work of Brenier and Gangbo \cite{brenier2003approximation}.
Composing these three realizations yields a controlled flow $\phi_\theta^T$ that approximates $\phi$ in $L^p(\Omega)$, and the stability estimates in \Cref{sec: one.other.ineq}--\Cref{thm: stability.measure.preserving} propagate this approximation to the pushforward measures.
This proof is thus fully constructive. See \Cref{sec: proof.th1} (and specifically \Cref{sec: proof.1}).

\Cref{thm:sobolev_to_switches} is of probabilistic nature.
We write $\phi$ as the flow map of some $u\in L^2([0,1];H^s)$, and Sobolev regularity implies that $u(t,\cdot)$ is in the Barron class \cite{barron2002universal, wiener1932tauberian} for a.e. $t$.
We then employ an avatar of Maurey's empirical method \cite{pisier1981remarques}---here essentially a Monte--Carlo argument---we partition time into $N$ intervals and, on each interval, sample a single neuron from the representing Barron measure and freeze it on that interval, producing a piecewise-constant control with exactly $N$ switches. This  can also be interpreted as an avatar of  \emph{chattering control}: the Barron representation of $u(t,\cdot)$ acts as a relaxed control, namely a Young measure on parameters. 
A variance estimate combined with a discrete Gr\"onwall argument yields an error of order $N^{-\frac12}$. See \Cref{sec: proof.2}.

\subsection{Discussion}

\subsubsection{Does one inequality imply the other?} \label{sec: one.other.ineq}

$L^p$–approximation of maps does not, by itself, control the total variation distance between pushforward measures. We first illustrate this through examples.

\begin{enumerate}
    \item On $\Omega=\mathbb{T}^d$ let $h=1/N$ and round each coordinate down to the nearest gridpoint:
    \[
      Q_h(x)\coloneqq h\left\lfloor \frac{x}{h}\right\rfloor\qquad(x\in\Omega).
    \]
    Then $\|Q_h-\mathsf{id}\|_{L^p(\Omega)}\lesssim h$, but for any absolutely continuous $\mu$,
    \(
      Q_{h\#}\mu
    \)
    is purely atomic, whereas $\mathsf{id}_\#\mu=\mu$; hence
    \(
      \left|\mathsf{id}_\#\mu-Q_{h\#}\mu\right|_{\mathsf{TV}(\Omega)}=2.
    \)

    \item On the circle $\mathbb{T}^1$ define, for $0<\alpha<1/(2\pi)$ and $h\in(0,1)$,
\[
  \psi_h(x)\coloneqq x+\alpha h\sin\left(\frac{2\pi x}{h}\right)\quad(\mathrm{mod}\ 1).
\]
Then $\|\psi_h-\mathsf{id}\|_{L^\infty(\mathbb{T}^d)}\le \alpha h$. Notice that $\psi_h'(x)=1+ 2\pi\alpha \cos({2\pi x}/{h})$, which oscillates between $1-2\pi\alpha$ and $1+2\pi\alpha$ (hence stays positive, so $\psi_h$ is a $C^1$ diffeomorphism).
Taking $\mu$ to be the Lebesgue measure, the pushforward density is $\psi_{h\#}\mu = (\psi_h^{-1})'\diff x$, and by the change–of–variables formula
\[
  |\mu-\psi_{h\#}\mu|_{\mathsf{TV}(\mathbb{T}^1)}
  = \int_{\mathbb{T}^1}\left|\psi_h'(x)-1\right| \mathrm{d}x
  = 2\pi\alpha \int_0^1 \left|\cos\left(\frac{2\pi x}{h}\right)\right|\mathrm{d}x.
\]
So we get $|\mu-\psi_{h\#}\mu|_{\mathsf{TV}}\to4\alpha$ as $h\to0$  even though $\|\psi_h-\mathsf{id}\|_{L^\infty}\to 0$.
In particular, this shows that previous results on approximation of the flow maps via \eqref{eq: neural.ode}  \cite{li2022deep,ruiz2023neural,cheng2025interpolation}, as well as \cite{brenier2003approximation}, are insufficient for our purposes.
    
    \item Given $\mu_0,\mu_1\in\mathscr{P}_{\mathrm{ac}}(\Omega)$, the Brenier map $\phi_{\mathrm{OT}}$ and the Kn\"othe–Rosenblatt map $\phi_{\mathrm{KR}}$ both push $\mu_0$ to $\mu_1$, hence
    $|\phi_{\mathrm{OT}\#}\mu_0-\phi_{\mathrm{KR}\#}\mu_0|_{\mathsf{TV}(\Omega)}=0$. However $\|\phi_{\mathrm{OT}}-\phi_{\mathrm{KR}}\|_{L^p(\Omega)}$ can be large. In that regard, also \cite{ruiz2024control} does not answer the question of approximating the flow-map.
\end{enumerate}
We discuss the fundamental reason behind these examples.
For a $C^1$ diffeomorphism $\psi$ and an absolutely continuous $\mu$ with density $\rho$,
\[
(\psi_\#\mu)(B)=\int_{\psi^{-1}(B)} \rho(x)\mathrm{d}x
\quad\Longrightarrow\quad
\frac{\mathrm{d}(\psi_\#\mu)}{\mathrm{d}x}(y)
=\frac{\rho(\psi^{-1}(y))}{|\det\nabla\psi(\psi^{-1}(y))|}.
\]
Let $\phi,\psi:\Omega\to\Omega$ be $C^1$ diffeomorphisms  and set $\eta\coloneqq\phi^{-1}\circ\psi$. Since total variation is invariant under pushforward by a bijection,
\[
|\phi_\#\mu-\psi_\#\mu|_{\mathsf{TV}}
=|(\phi^{-1})_\#(\phi_\#\mu-\psi_\#\mu)|_{\mathsf{TV}}
=|\mu-\eta_\#\mu|_{\mathsf{TV}}.
\]
Using the formula above with $\eta$ gives the explicit identity
\[
|\phi_\#\mu-\psi_\#\mu|_{\mathsf{TV}}
=\int_{\Omega}|\rho(\eta(x))|\det\nabla\eta(x)|-\rho(x)|\diff x.
\]
Thus $\mathsf{TV}$ responds to the deviation of $|\det\nabla\eta|$ from $1$, which is the effect of the \emph{compressible part} of the maps---one may recall Brenier's polar factorization theorem, \cite{brenier1991polar}. 
For general non‑injective maps there is also a multiplicity effect: the pushforward density is a sum over preimages, which can further increase $\mathsf{TV}$ even if pointwise displacements are small. By contrast, $\|\psi-\phi\|_{L^p}$ only measures displacement and is insensitive to Jacobian and multiplicity. Indeed, one can separate displacement and Jacobian effects:
\[
|\rho(\eta)|\det\nabla\eta|- \rho|
\le |\rho(\eta)-\rho| + \|\rho\|_{L^\infty}\,||\det\nabla\eta|-1|,
\]
hence
\[
|\phi_\#\mu-\psi_\#\mu|_{\mathsf{TV}}
\le \|\rho\|_{C^{0,1}}\|\eta-\mathsf{id}\|_{L^1(\Omega)}
+\|\rho\|_{L^\infty}\||\det\nabla\eta|-1\|_{L^1(\Omega)}.
\]
In the measure‑preserving class, this obstruction disappears and small map displacement controls $\mathsf{TV}$. Indeed if both $\phi$ and $\psi$ are measure preserving, then $|\det\nabla\eta|=1$, and if moreover $\phi^{-1}$ is $L$–Lipschitz, then
\[
\|\eta-\mathsf{id}\|_{L^1(\Omega)}
=\|\phi^{-1}\circ\psi-\phi^{-1}\circ\phi\|_{L^1(\Omega)}
\le L\,\|\psi-\phi\|_{L^1(\Omega)}.
\]
This shows the fundamental obstruction is indeed the compressible part.

\subsubsection{On the number of switches} \label{sec: switches}

The number of switches in our construction is dependent solely on the approximation of arbitrary measure-preserving maps by ones which stem from solutions of \eqref{eq: neural.ode}, and is, at best, of order $1/\varepsilon^{d}$. We discuss the difficulties of having better estimates in the worst case in the following.

Let $\mathscr{G}$ be a finite group and $S\subset \mathscr{G}$ a symmetric generating set ($S=S^{-1}$).
The \emph{Cayley graph} $\mathrm{Cay}(\mathscr{G},S)$ has vertex set $\mathscr{G}$ and undirected edges
$\{g,sg\}$ for $g\in \mathscr{G}$, $s\in S$. 
The associated word metric and the diameter of the graph are defined as
\[
\mathrm{dist}_S(e,g)=\min\{\ell: g=s_1\cdots s_\ell, s_i\in S\}, \hspace{0.1cm}
\mathrm{diam}(\mathscr{G},S)=\max_{g\in \mathscr{G}}\mathrm{dist}_S(e,g).
\]

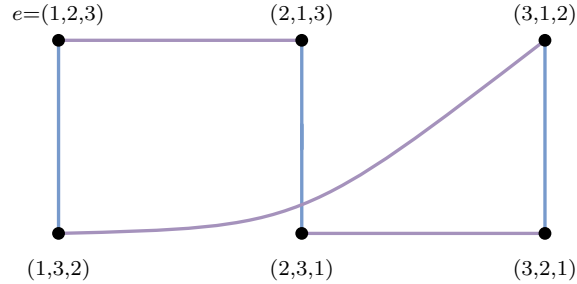
\begin{figure}[h!]
\centering
\begin{tikzpicture}[line cap=round, line join=round, scale=0.8]

  % ---------------- adjustable parameters ----------------
  \def\lw{1.25pt}        % line width for edges
  \def\dotrad{3pt}    % vertex dot radius
  \def\overlapL{0.55cm} % dashed-zone extent to the left of the crossing (along the diagonal)
  \def\overlapR{0.55cm} % dashed-zone extent to the right of the crossing (along the diagonal)
  \def\bluetint{45}     % 0..100, how light the blue is in the overlap zone (bigger => lighter)

  % ---------------- coordinates: three columns, two rows ----------------
  \coordinate (TL) at (0,  1.6);
  \coordinate (TM) at (4,  1.6);
  \coordinate (TR) at (8,  1.6);
  \coordinate (BL) at (0, -1.6);
  \coordinate (BM) at (4, -1.6);
  \coordinate (BR) at (8, -1.6);

  % ---------------- compute intersection and cut points ----------------
  % name (undrawn) paths for the middle vertical and the red diagonal
  \path[name path=Mvert] (TM) -- (BM);
  \path[name path=Diag]  (BL) -- (TR);
  % intersection point I of the diagonal and the middle vertical
  \path[name intersections={of=Diag and Mvert, by=I}];

  % endpoints of the special (dashed / faded) zone along the diagonal
  \coordinate (AL) at ($(I)!\overlapL!(BL)$);  % toward BL
  \coordinate (AR) at ($(I)!\overlapR!(TR)$);  % toward TR

  % project those points onto the middle vertical to split it in three
  \coordinate (J1) at (TM |- AL);
  \coordinate (J2) at (TM |- AR);

  % ---------------- draw blue verticals ----------------
  \draw[softblue!80!black, line width=\lw] (TL) -- (BL);              % left vertical
  \draw[softblue!80!black, line width=\lw] (TR) -- (BR);              % right vertical

  % middle vertical: strong blue – light blue – strong blue (faded in the overlap)
  \draw[softblue!80!black,           line width=\lw] (TM) -- (J1);
  \draw[softblue!80!black, line width=\lw] (J1) -- (J2);    % lighter tint where the red is dashed
  \draw[softblue!80!black,           line width=\lw] (J2) -- (BM);

  % ---------------- red horizontals ----------------
  \draw[lavender!80!black, line width=\lw] (TL) -- (TM);
  \draw[lavender!80!black, line width=\lw] (BM) -- (BR);

  % ---------------- red diagonal: solid – dashed – solid ----------------
  % ---------------- red diagonal: parabolic arc ----------------
\draw[lavender!80!black, line width=\lw]
  (BL) .. controls ($(BL)!0.5!(TR)+(0,-1.5)$) .. (TR);

  % ---------------- vertices ----------------
  \foreach \P in {TL,TM,TR,BL,BM,BR}
    \fill (\P) circle (\dotrad);

  % ---------------- labels ----------------
  \node[above=2pt]  at (TL) {$\scriptstyle e=(1,2,3)$};
  \node[above=2pt]  at (TM) {$\scriptstyle (2,1,3)$};
  \node[above=2pt]  at (TR) {$\scriptstyle (3,1,2)$};
  \node[below=6pt]  at (BL) {$\scriptstyle (1,3,2)$};
  \node[below=6pt]  at (BM) {$\scriptstyle (2,3,1)$};
  \node[below=6pt]  at (BR) {$\scriptstyle (3,2,1)$};
\end{tikzpicture}
 \caption{Let $G=S_3$,  where $(1,2,3)$ is the identity.  Take the symmetric generating set $S=\{(2,1,3),(1,3,2)\}$ (the transpositions $(12)$ and $(23)$ in cycle notation), so $S^{-1}=S$.  The Cayley graph $\mathrm{Cay}(G,S)$ has vertex set $G$ and an (undirected) edge between $g_1,g_2$ iff $g_2=sg_1$ for some $s\in S$.  We color the edge blue if $g_2=(1,3,2)g_1$ and purple if $g_2=(2,1,3)\,g_1$. The resulting graph is a $2$-regular connected graph on $6$ vertices, hence a single $6$-cycle $C_6$, and its diameter is $3$.} 
 \end{figure}

Fix a cubical partition of $\Omega$ with mesh $\varepsilon>0$, and let
$n=|\Omega|\,\varepsilon^{-d}$ be the number of cubes. The proof of
\Cref{lem: univ.approx.flow} shows that every measure-preserving map can be
approximated in $L^p$ by a map that permutes these cubes. Thus, at scale
$\varepsilon$, the incompressible part reduces to a finite combinatorial problem
on the symmetric group $\mathscr{G}=S_n$.

Let $S\subset S_n$ be any generating set of cube permutations such that each
$s\in S$ is induced by a measure-preserving flow of \eqref{eq: neural.ode}
driven by a piecewise-constant control. After adjoining inverses if necessary,
we may assume that $S=S^{-1}$. For a target permutation $\sigma\in S_n$, the
minimal number of generators needed to realize $\sigma$ is exactly its word
length $\mathrm{dist}_S(e,\sigma)$ in the Cayley graph $\mathrm{Cay}(S_n,S)$.
Consequently, the worst-case number of blocks is governed by
$\mathrm{diam}(\mathrm{Cay}(S_n,S))$. If each generator in $S$ can be
implemented with at most $C$ switches, then
\[
\#\mathrm{switches}\le C\,\mathrm{diam}(\mathrm{Cay}(S_n,S)).
\]
In this sense, the switch-complexity question becomes a diameter problem for
Cayley graphs of $S_n$.

This dependence on the generating set is substantial. For instance, for the
adjacent transpositions
\(S=\{(12),(23),\dots,(n-1\,n)\},
\)
one has
\[
\mathrm{diam}(\mathrm{Cay}(S_n,S))=\binom{n}{2}.
\]
More generally, the diameter of $\mathrm{Cay}(S_n,S)$ is highly sensitive to
the choice of $S$; see, for instance,
\cite{HelfgottSeress2014,Kraft2015}.

\begin{remark}[Width] \label{rem: width}
We focus on width~$1$ perceptrons. 
All constructions extend verbatim to  width~$m$ by replacing \eqref{eq: two.layer.mlp} with
$v(x,\theta)=\sum_{j=1}^m w_j(a_j\cdot x+b_j)_+$, letting $\theta(t)\in\mathbb{R}^{m(2d+1)}$ be piecewise constant, and allowing several neurons to be active in parallel on each time-interval.
While additional width can only increase expressivity and may reduce the number of switches in specific architectures (by parallelizing elementary building blocks), we do not currently know how to leverage width to improve the worst--case switch bounds in a systematic way, so we keep the exposition at width~$1$.
\end{remark}

\begin{remark}[Geodesics]
On the infinite-dimensional group $\mathrm{SDiff}([0,1]^d)$ of measure-preserving diffeomorphisms equipped with the right–invariant $L^2$-distance in the sense of Arnold \cite{arnold1966aif,ebinmarsden1970}, one has 
\[
\|f-g\|_{L^2([0,1]^d)}\le \mathrm{dist}_{\mathrm{SDiff}}(f,g)\le C\|f-g\|_{L^2([0,1]^d)}^{\alpha},
\]
for some $\alpha>0$ and $C\ge1$. The upper bound is due to \cite{shnirelman94Generalized}. On cubes in dimension $d\ge 3$, this was sharpened to $\alpha=1$ in \cite{SchifferZizza2024}.

In this paper we do not seek geodesics (see \Cref{fig:geodesic}). Our aim is a constructive realization of measure‑preserving maps by \eqref{eq: neural.ode}, and the relevant computational cost is the number of switches of the piecewise‑constant control, which is controlled by the diameter of the associated Cayley graph. In contrast to \cite{Zizza2024,SchifferZizza2024}, where Cayley bounds are leveraged to improve the exponent $\alpha$ in Shnirelman’s inequality, here the kinetic energy $\mathrm{dist}_{\mathrm{SDiff}}$ is immaterial—the construction may be far from geodesic.   
\end{remark}

\begin{figure}[h!]
 \centering
\begin{tikzpicture}[line cap=round, line join=round, scale=0.8]
  %---------------- parameters you can tweak ----------------
  \def\X{6.0}          % horizontal distance from mu to T#mu
  \def\eps{1}        % vertical gap: |(T_\eps)#mu - T#mu|
  \def\H{4}          % "bulge" height parameter for the parabola (increase for a higher arc)
  \def\lw{1.5pt}       % line width for strokes
  \def\dotrad{3pt}   % radius of black dots

  %---------------- coordinates ----------------
  \coordinate (mu)   at (0,0);
  \coordinate (Tmu)  at (\X,0);
  \coordinate (Temu) at (\X,\eps);

  %---------------- red parabola: from mu to Temu ----------------
  % parametric: x = X t, y = eps t + 4 H t (1 - t),  t in [0,1]
  \draw[softblue!80!black, line width=\lw, samples=120, smooth, variable=\t, domain=0:1]
    plot ({\X*\t}, {\eps*\t + 4*\H*\t*(1-\t)});

  %---------------- green straight segment (mu to T#mu) ----------------
  \draw[lavender!80!black, line width=\lw] (mu) -- (Tmu);

  %---------------- dotted epsilon segment on the right ----------------
  \draw[gray, dashed, line width=1pt] (Tmu) -- (Temu);
  \node[left=4pt] at ($(Tmu)!0.5!(Temu)$) {$\varepsilon$};

  %---------------- black vertices ----------------
  \fill (mu)   circle (\dotrad);
  \fill (Tmu)  circle (\dotrad);
  \fill (Temu) circle (\dotrad);

  %---------------- labels ----------------
  \node[left=4pt] at (mu)   {$\mu$};
  \node[right=6pt]      at (Tmu)  {$\phi_{\#}\mu$};
  \node[right=6pt]      at (Temu) {$\phi_{\varepsilon\#}\mu$};

  % optional: bounding box padding
  \path[use as bounding box] (-0.6,-0.8) rectangle (\X+1.3,\eps+1.3);
\end{tikzpicture}
\caption{ \Cref{thm: gen.thm} allows us to approximate, in particular, the optimal transport map $\phi$ between $\mu$ and a target absolutely continuous measure $\nu$ by an approximate map $\phi_\varepsilon$. However, even if the densities $\phi_{\#}\mu$ and $\phi_{\varepsilon\#}\mu$ are close in $\mathsf{TV}$ and the maps are close, the trajectory $t\mapsto\mu^\varepsilon(t)$ given by \eqref{eq: cont.eq} is not close to $((1-t)\mathsf{id}+t\phi)_\#\mu$ for all times $t$.}%In $d=1$ we can approximate the increasing rearrangement $\phi_{\text{IR}}$ (the optimal transport map) by $\phi_{\text{IR}}^\varepsilon$ and  $\phi_{\text{IR}\#}\mu$ by $\phi_{\text{IR}\#}^\varepsilon\mu$. However, the trajectory $t\mapsto\mu^\varepsilon(t)$ given by \eqref{eq: cont.eq} is not close to $((1-t)\mathsf{id}+t\phi_{\text{IR}})_\#\mu$ for all times $t$.}
\label{fig:geodesic}
\end{figure}
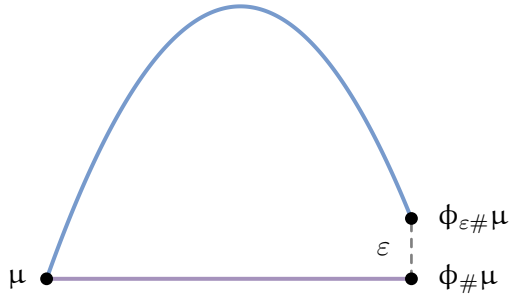

\subsubsection{Further related work}

Using control theory techniques to understand various aspects of neural networks has been a fruitful line of research. See, for instance, \cite{weinan2017proposal, esteve2020large, geshkovski2021control, geshkovski2022turnpike, esteve2023sparsity, cipriani2024minimax, daudin2025genericity, puttschneider2026dissipativity, hintermuller2026layerwise, grong2026controllability} for optimal control, and \cite{li2022deep, elamvazhuthi2022neural, ruiz2023neural, ishikawa2023universal, li2024deep, scagliotti2025minimax, scagliotti2025normalizing, de2025optimal, li2026universal} for constructive approximate control approaches. %{\color{teal}   \cite{rouhvarzi2025incremental} study the autonomous incremental setting, proving that finite compositions of autonomous flows are universal for orientation-preserving homeomorphisms of $[0,1]^d$; their extensions to arbitrary continuous maps and probability measures rely on a linear lift to one additional dimension.}
Results of this nature have also been adapted to more advanced neural network architectures such as Transformers \cite{geshkovski2025mathematical, geshkovski2024measure, adu2024approximate, agrachev2025generic}.

The recent work \cite{kazandjian2025orbits} also focuses on the approximate controllability question for the continuity equation, but only considers the approximation of a target density and restricts attention to Hamiltonian flows. In particular, the proof of approximating the measure-preserving components is very similar (and is alike the strategy of \cite{brenier2003approximation}), although they must use Hamiltonian flows instead of shear flows, as we do.

Finally, a related work on the Maurey-in-time approach used in the proof of \Cref{thm:sobolev_to_switches_TV} is \cite{ma2022barron}. However, the approach is different, as the authors work with discrete-time updates and define the limiting object as an accumulated integral function class. Moreover, they do not use log-Jacobian tracking updates to obtain a joint approximation of a function and the density.

\section*{Acknowledgments}
We thank Yann Brenier, Aldo Pratelli, Carlos Mora and Kimi Sun for motivating discussions.
B.G.'s research was kindly supported by a Sorbonne Emergences grant and a gift by Google.
D.R.B. research was supported by “France 2030” support managed by the Agence Nationale de la Recherche, under the reference ANR-23-PEIA-0004.

\section{Proof of \Cref{thm: gen.thm}} \label{sec: proof.th1}

In \Cref{subsec: lagrange}, we approximate the target diffeomorphism $\phi$ by a piecewise--affine Lagrange interpolant $\phi_\varepsilon$ (and simultaneously approximate $\phi^{-1}$) using \Cref{thm: lagrange.approx} from \cite{iwaniec2017triangulation}. Then, we then exploit the special structure of Lagrange interpolants: \Cref{prop: lagrange.decomposition} yields a polar-like factorization $\phi_\varepsilon=m_{2\varepsilon}\circ g_\varepsilon\circ m_{1\varepsilon}$ into two measure--preserving pieces $m_{1\varepsilon},m_{2\varepsilon}$ and a simple compressible map $g_\varepsilon=\nabla\varphi$ acting only on one coordinate and given by a monotone piecewise--affine profile. In \Cref{subsec: approx.flows}, we realize these factors by neural ODE flows with piecewise--constant controls: the compressible factor is implemented \emph{exactly} by \Cref{lem: exact.linear.nonuniform}, while the incompressible factors are approximated by composing explicit divergence--free ``swap'' flows (constructed in \Cref{lem: mp.neural.ode.swap}) inside the permutation scheme of \Cref{lem: univ.approx.flow}. 
Finally, in \Cref{subsec: stab.estimates}, we propagate the resulting $L^p$ approximation of maps to $\mathsf{TV}$--control of pushforwards using the stability tools in \Cref{lem: contraction} and \Cref{thm: stability.measure.preserving}; assembling these ingredients yields first \eqref{eq: Lp.small} and then \eqref{eq: TV.small}, completing the proof.

\subsection{Approximation cornerstones}

\subsubsection{Lagrange interpolation}\label{subsec: lagrange}

We work with Lagrange interpolations throughout, which calls for a remainder of several concepts. 

We call a family of subsets $\mathscr{F}$ of $\R^d$ a \emph{partition} of a domain $\Omega\subset\R^d$ if $\mathscr{F}$ consists of pairwise non-overlapping (namely, have disjoint interiors) closed subsets of $\Omega$ such that $\bigcup_{A\in\mathscr{F}}A=\Omega$. A partition $\mathscr{F}$ is \emph{simplical}---henceforth referred to as a \emph{triangulation}---provided $\mathscr{F}=\{\triangle_j\}_{j\geq1}$, where $\triangle_j$ are closed $d$-dimensional simplexes such that
1). every compact subset of $\Omega$ intersects only a finite number of simplexes in $\mathscr{F}$; 2). the intersection of two simplexes in $\mathscr{F}$ is either empty or a $k$-dimensional subsimplex of both simplexes, $k=0,\ldots,d$.\footnote{The common convention is that vertices are $0$-dimensional subsimplexes and the simplex itself is its $d$-dimensional subsimplex.}
A continuous map $f_{\mathscr{L}}:\Omega\to\R^d$ is called \emph{piecewise affine} if there is a triangulation $\mathscr{F}=\{\triangle_j\}_{j\geq1}$ of $\Omega$ such that $f_{\mathscr{L}}$ is affine on each simplex $\triangle_j$. Such a map $f_{\mathscr{L}}$ is called a \emph{Lagrange interpolation} of a continuous map $f:\Omega\to\R^d$ if it agrees with $f$ at the vertices of every simplex of the triangulation. 

\subsubsection*{Piecewise affine approximation}

The following result is shown in \cite{iwaniec2017triangulation}.

\begin{theorem}[\cite{iwaniec2017triangulation}] \label{thm: lagrange.approx}
    Let $\Omega_1, \Omega_2\subset\R^d$ be two domains, suppose $\phi:\Omega_1\to\Omega_2$ is a $C^1$--smooth diffeomorphism, and let $\psi$ denote its inverse. Let  $\varepsilon>0$. There exists a piecewise affine homeomorphism $\phi_\varepsilon:\Omega_1\to\Omega_2$ (actually a Lagrange interpolation of $\phi$) which satisfies 
    \begin{equation*}
        |\phi(x)-\phi_\varepsilon(x)|+|\nabla\phi(x)-\nabla\phi_\varepsilon(x)|\leq\varepsilon \hspace{1cm} \text{ for a.e. }x\in\Omega_1.
    \end{equation*}
    Its inverse $\psi_\varepsilon:\Omega_2\to\Omega_1$ satisfies
    \begin{equation*}
        |\psi(x)-\psi_\varepsilon(x)|+|\nabla\psi(x)-\nabla\psi_\varepsilon(x)|\leq\varepsilon \hspace{1cm} \text{ for a.e. }x\in\Omega_2.
    \end{equation*}
\end{theorem}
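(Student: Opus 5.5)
The plan is to follow the classical two-stage route: first triangulate, then correct the interpolant so that it is injective with controlled gradients. For injectivity of the triangulation and the inverse estimates, we use the inverse function theorem with the $C^1$ bounds on $\phi$ and $\psi$.

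\emph{Step 1: fine, non-degenerate triangulations.} Fix an exhaustion of $\Omega_1$ by compact sets $K_1\subset K_2\subset\cdots$. Construct a triangulation of $\Omega_1$, locally finite, whose simplices near $\partial K_j\setminus K_{j-1}$ have diameter $h_j\to0$ and are uniformly non-degenerate, meaning the ratio of inradius to diameter is bounded below by a constant $c_d$. A Whitney-type decomposition into dyadic cubes, with each cube subdivided by the Kuhn/Freudenthal triangulation, gives this. Adjacent cubes of different sizes are handled by refining consistently, using the standard "compatible" Whitney triangulations. Let $\phi_\varepsilon$ be the Lagrange interpolant on this triangulation.

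\emph{Step 2: local gradient estimate.} On a simplex $\triangle$ with vertices $x_0,\dots,x_d$, the gradient of the interpolant is determined by $\phi(x_i)-\phi(x_0)=\nabla\phi(\xi_i)(x_i-x_0)+o(h)$. By uniform continuity of $\nabla\phi$ on $K_j$ and the non-degeneracy bound, this gives
\[
|\nabla\phi_\varepsilon-\nabla\phi|\le C(c_d)\,\omega_j(h_j)
\]
on $\triangle$, where $\omega_j$ is the modulus of continuity of $\nabla\phi$ on $K_{j+1}$. Similarly $|\phi-\phi_\varepsilon|\le Ch_j\|\nabla\phi\|$. Choosing $h_j$ small enough layer by layer makes the sum at most $\varepsilon_j\le\varepsilon$, with $\varepsilon_j$ also small relative to $\inf_{K_j}|(\nabla\phi)^{-1}|^{-1}$.

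\emph{Step 3: homeomorphism and inverse.} Since $\nabla\phi_\varepsilon$ is within $\varepsilon_j$ of the invertible $\nabla\phi$, each affine piece is orientation-preserving, with $\det\nabla\phi_\varepsilon$ of the same sign as $\det\nabla\phi$. A simplicial map that is locally orientation-preserving and agrees with a homeomorphism on the boundary behaviour at infinity is a local homeomorphism across faces, hence a proper covering. It is therefore a homeomorphism onto $\Omega_2$, with degree-one argument given that it is a small perturbation of $\phi$ near the ends of the exhaustion. This properness at the boundary of $\Omega_1$ is the main obstacle: the error must shrink faster than $\dist(\phi(x),\partial\Omega_2)$, which is enforced by the layerwise choice of $h_j$.

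\emph{Step 4: the inverse.} Finally, $\psi_\varepsilon=\phi_\varepsilon^{-1}$ is piecewise affine on the image triangulation, and its gradient satisfies $\nabla\psi_\varepsilon=(\nabla\phi_\varepsilon)^{-1}\circ\psi_\varepsilon$. The estimate on $\psi_\varepsilon$ follows from $|A^{-1}-B^{-1}|\le|A^{-1}||A-B||B^{-1}|$ together with $|\psi_\varepsilon-\psi|\lesssim|\phi_\varepsilon-\phi|$ (via the Lipschitz bound on $\psi$ locally), after shrinking the $\varepsilon_j$ once more.
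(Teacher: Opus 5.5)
Your outline is essentially the argument the paper defers to in Iwaniec--Kovalev--Onninen: a locally finite dyadic (Whitney-type) triangulation that is compatible across cubes of different sizes, uniformly non-degenerate and locally as fine as needed, followed by Lagrange interpolation (where shape regularity turns the modulus of continuity of $\nabla\phi$ into the $C^1$ error bound), injectivity, and the inverse estimate. The compatible graded triangulation, which you take as standard, is exactly the technical point they construct explicitly. One correction in Step 3: positivity of $\det\nabla\phi_\varepsilon$ on every simplex only gives a local homeomorphism across codimension-one faces, since a piecewise affine map with positively oriented pieces can branch at codimension-two faces. Local injectivity should therefore come either from your $C^1$-closeness on small convex neighbourhoods, or directly from the count $1=\deg\phi_\varepsilon=\sum_{x\in\phi_\varepsilon^{-1}(y)}\mathrm{ind}(\phi_\varepsilon,x)$ with each index at least $1$; your ingredients already provide both.
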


To prove \Cref{thm: lagrange.approx}
\cite{iwaniec2017triangulation} construct a specific selfsimilar isotropic triangulation of $\Omega_1$. First, the authors decompose $\Omega_1$ into a dyadic grid made of dyadic cubes, meaning hypercubes obtained by recursive bisection. This dyadic structure is locally isotropic: all cubes have comparable side lengths relative to their position, avoiding highly elongated or degenerate shapes near the boundary. 
Next, the dyadic cubes are further triangulated into simplexes in a selfsimilar way. Each dyadic cube is partitioned into a controlled number of simplexes by subdividing it into model simplexes that are rescaled versions of a finite set of prototypes. This ensures that the resulting simplexes are uniformly regular: all simplexes have bounded aspect ratios, avoiding flattening or degeneration. Importantly, the triangulation respects the dyadic structure: faces of adjacent cubes are triangulated compatibly, and the construction is stable under dyadic refinement. This carefully designed triangulation enables a Lagrange interpolation that approximates $\phi$ both uniformly and in derivative, while preserving injectivity.

\begin{remark}
Relaxing the $C^1$ assumption to Sobolev homeomorphisms or diffeomorphisms is already a delicate problem even in low dimension \cite{daneri2014smooth,mora2014approximation,iwaniec2017limits}. 
In some settings such approximations can fail altogether: \cite{donaldson1989quasiconformal} 
exhibit an obstruction in dimension $4$ to approximation in $C^0$ norm of homeomorphisms along with their inverses by piece-wise constant approximations.
\end{remark}

\subsubsection*{Polar-like factorization of Lagrange interpolants}

Let $\Omega\subseteq\R^d$. 
We say that $m:\Omega\to\Omega$ is \emph{measure-preserving} (with respect to the Lebesgue measure) if 
\begin{equation*}
    \int_A \diff x = \int_{m(A)}\diff x \hspace{1cm} \text{ for all } A\subset\Omega.
\end{equation*}
Continuously differentiable diffeomorphisms $m$ are measure-preserving if and only if $|\det \nabla m|\equiv 1$.
It is well-known that general vector-valued functions admit a polar factorization as the composition of the gradient of a convex function with a measure-preserving map \cite{brenier1991polar}. In the special case of Lagrange interpolants, slightly more can be said---they can be decomposed in three "actions", which impel the geometrical nature of the flows we construct subsequently.

\begin{figure}[h!]
    \centering
    \[
    \includegraphics[scale=0.7]{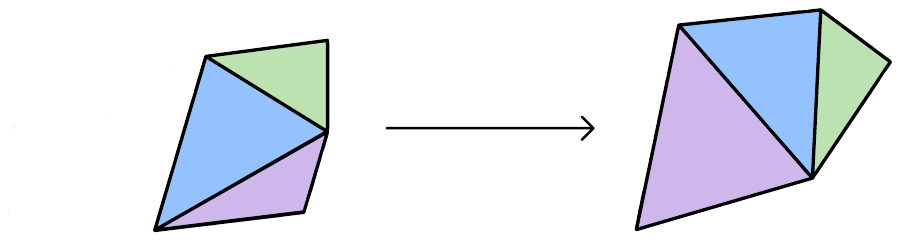}
    \quad \raisebox{1cm}{\(\xmapsto{\hspace{0.5cm}\phi_{\mathscr{L}}\hspace{0.5cm}}\)} \quad
    \includegraphics[scale=0.7]{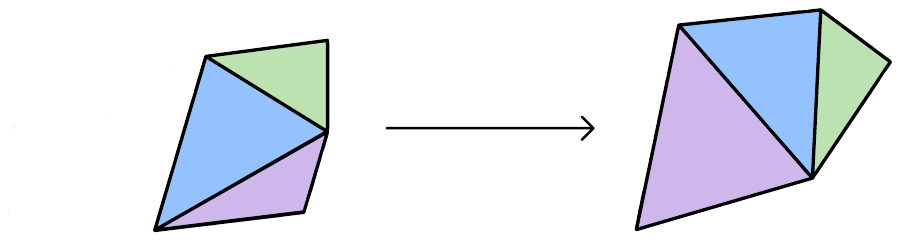}
    \]
    \caption{A triangulation and its image by a piecewise affine map $\phi_{\mathscr{L}}$.}
    \label{fig:placeholder}
\end{figure}

\begin{proposition} \label{prop: lagrange.decomposition}
Let $\Omega\subset\R^d$ be a bounded orthotope.
Let $\phi_{\mathscr{L}}:\Omega\to\phi_{\mathscr{L}}(\Omega)\subset\R^d$ denote the Lagrange interpolation of a $C^1$-diffeomorphism $\phi:\Omega\to\phi(\Omega)\subset\R^d$ as per \Cref{thm: lagrange.approx}. 
Then 
\begin{equation*}
    \phi_{\mathscr{L}} = m_{2}\circ\nabla\varphi\circ m_1 \hspace{1cm} \text{ on } \Omega,
\end{equation*}
where 
\begin{enumerate}
    \item $m_1,m_2$ are measure-preserving measurable bijections of $\R^d$ with measurable inverses, are of bounded variation on compact sets (indeed piecewise affine on a polyhedral partition), and satisfy $|\det\nabla m_j|=1$ a.e., as well as $|\det\nabla m_j^{-1}|=1$ a.e.;
    \item $\nabla\varphi$ has the special form
    \[
    \nabla\varphi(x_1,\dots,x_d)=\psi'(x_1)e_1+\sum_{k=2}^d x_k e_k,
    \]
    where $\psi':\R\to\R$ is continuous, strictly increasing, and piecewise affine;
    in particular $\nabla\varphi$ is a bijection of $\R^d$.
\end{enumerate}
\end{proposition}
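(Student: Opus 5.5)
We will construct the factorization by transporting mass simplex-by-simplex. The piecewise-affine map $\phi_{\mathscr{L}}$ is a homeomorphism onto its image. It is affine on each simplex $\triangle_j$ of the triangulation (finitely many, since $\Omega$ is a bounded orthotope and we may restrict to a compact neighbourhood), with Jacobian $J_j\coloneqq|\det\nabla\phi_{\mathscr{L}}|_{\triangle_j}|>0$. The idea is that a piecewise-affine map only changes volume by the constant factor $J_j$ on each piece. So we first rearrange the simplexes, measure-preservingly, into parallel slabs $\{c_{j-1}\le x_1< c_j\}\cap R$ of a fixed box $R=[0,L]\times Q$. We then stretch each slab in the $x_1$-direction by the factor $J_j$, and finally rearrange the stretched slabs, again measure-preservingly, onto the image simplexes $\phi_{\mathscr{L}}(\triangle_j)$.

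\textbf{Step 1 (choice of $\psi$).} Fix a cross-section $Q\subset\R^{d-1}$ of unit volume and order the simplexes $\triangle_1,\dots,\triangle_n$. Set $c_0=0$ and $c_j=c_{j-1}+|\triangle_j|$, so that the slab $S_j=[c_{j-1},c_j)\times Q$ has volume $|\triangle_j|$. Define $\psi'$ to be the continuous piecewise-affine function with $\psi'(0)=0$ and slope $J_j$ on $[c_{j-1},c_j]$, extended with slope $1$ outside $[0,c_n]$. Then $\psi'$ is strictly increasing, and $\varphi(x)=\psi(x_1)+\tfrac12\sum_{k\ge2}x_k^2$ is convex with $\nabla\varphi$ as in item 2. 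This map sends $S_j$ onto $S_j'=[\psi'(c_{j-1}),\psi'(c_j))\times Q$, whose volume is $J_j|\triangle_j|=|\phi_{\mathscr{L}}(\triangle_j)|$. Moreover $\nabla\varphi$ is a bijection of $\R^d$, and it maps $\R^d\setminus([0,c_n]\times Q)$ bijectively onto $\R^d\setminus([0,\psi'(c_n)]\times Q)$.

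\textbf{Step 2 (measure-preserving pieces).} For each $j$ we need a measure-preserving, piecewise-affine bijection $\tau_j:\triangle_j\to S_j$, up to null sets. We obtain it by first mapping $\triangle_j$ affinely onto a simplex of the same volume, and then using the classical fact that a simplex and a box of equal volume are equidecomposable by finitely many polyhedral pieces moved by volume-preserving affine maps. Scissors congruence with translations and rotations is not available in general, but affine maps of determinant $\pm1$ suffice: cut the simplex into pieces via barycentric/Kuhn-type subdivisions and shear each into a parallelepiped, then shear parallelepipeds into boxes. Similarly we need bijections $\sigma_j:S_j'\to\phi_{\mathscr{L}}(\triangle_j)$. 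We then define
\[
m_1=\tau_j \text{ on }\triangle_j,\qquad m_2=\sigma_j\text{ on }S_j'.
\]
On the boxes we glue these pieces together, and off the boxes we complete each of $m_1,m_2$ to a measure-preserving bijection of $\R^d$. A direct computation on each $\triangle_j$ gives $\sigma_j\circ\nabla\varphi\circ\tau_j=\phi_{\mathscr{L}}$, provided we choose $\sigma_j\coloneqq \phi_{\mathscr{L}}|_{\triangle_j}\circ\tau_j^{-1}\circ(\nabla\varphi|_{S_j})^{-1}$. This choice is automatically affine on pieces, and it is measure-preserving because its Jacobian is $J_j\cdot 1\cdot J_j^{-1}=1$. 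So only $\tau_j$ needs to be built; $\sigma_j$ then comes for free.

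\textbf{Main obstacle: completing $m_1$ to a bijection of $\R^d$.} The difficulty is making $m_1$ a genuine measure-preserving bijection of all of $\R^d$, not just of the relevant sets. We need a bijection between $\R^d\setminus\Omega$ and $\R^d\setminus([0,c_n]\times Q)$, and similarly a bijection between the two complements for $m_2$, with measurable inverses and piecewise-affine structure. We plan to do this with a countable (locally finite) polyhedral exhaustion. Both complements are unions of countably many unit cubes up to finitely many leftover polyhedral pieces. We handle those leftover pieces by placing $\Omega$ and the box inside a common large cube and using the equidecomposability of Step 2 on the two differences, which have equal volume. Outside the large cube we take $m_1=\mathsf{id}$. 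For $m_2$, the target complement is $\R^d\setminus\phi_{\mathscr{L}}(\Omega)$, and the same argument applies, since $\phi_{\mathscr{L}}(\Omega)$ is a finite polyhedron of volume $\psi'(c_n)$. Boundaries are null sets, so all identities hold a.e., and $|\det\nabla m_j|=|\det\nabla m_j^{-1}|=1$ a.e. follows piecewise. Bounded variation on compact sets follows from local finiteness of the partition.
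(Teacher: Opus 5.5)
Your argument is correct, but it takes a different route from the paper.

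\textbf{The paper's route.} The paper does not pack the simplices into a box. It places, far from both $\Omega$ and $\phi_{\mathscr L}(\Omega)$, a ``tower'' of simplices $\triangle'_j=s_j\triangle_{\mathrm{ref}}+(L+H_j)e_1$ with $|\triangle'_j|=|\triangle_j|$ and pairwise disjoint $x_1$-projections $I_j$. It then takes $m_1$ to be the swap $\triangle_j\leftrightarrow\triangle'_j$ by a single affine map, and the identity elsewhere. Since $\psi'$ is affine on each $I_j$, $\nabla\varphi(\triangle'_j)$ is again a simplex, and $m_2$ is the analogous swap with $\phi_{\mathscr L}(\triangle_j)$. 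This makes bijectivity on all of $\R^d$ trivial. The only fact it uses is that equal-volume simplices are related by an affine map with $|\det|=1$; no equidecomposition is needed.

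\textbf{Your route.} You need an equiaffine scissors-congruence lemma twice: simplex to slab, and $K\setminus\Omega$ to $K\setminus([0,c_n]\times Q)$ inside a common cube (likewise for $m_2$). In exchange, everything happens inside a fixed cube. Also, defining $\sigma_j=\phi_{\mathscr L}|_{\triangle_j}\circ\tau_j^{-1}\circ(\nabla\varphi|_{S_j})^{-1}$ gives the factorization by construction and handles orientation correctly. By contrast, the paper's assertion that $C_j$ can be chosen with $\det\nabla C_j=+1$ is not quite right. $C_j$ is fixed by the vertex correspondence, so it has the sign of $\det\nabla\phi_{\mathscr L}$; only $|\det|=1$ is needed, so this is harmless.

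\textbf{One point to tighten.} A simplex cannot be ``sheared into a parallelepiped'', since affine maps preserve combinatorial type. The lemma you need has a short correct proof. A simplex $\mathrm{conv}(v_0,\dots,v_d)$ splits into two simplices of any prescribed volumes via the hyperplane through $v_2,\dots,v_d$ and a point of $[v_0,v_1]$. So triangulate two polyhedral sets of equal volume and refine the two lists of simplex volumes to a common list. Corresponding simplices are then matched by affine maps with $|\det|=1$.

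With this lemma, the countable exhaustion in your last paragraph is superfluous. Taking $m_1=\mathsf{id}$ outside a large cube, plus one finite equidecomposition inside it, suffices, which is what you end up doing.
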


\begin{figure}[h!]
    \centering
    \[
    \includegraphics[scale=0.5]{Figures/triangle-1.pdf}
    \quad \raisebox{1cm}{\(\xmapsto{\hspace{0.2cm}m_1\hspace{0.2cm}}\)} \quad
    \includegraphics[scale=0.5]{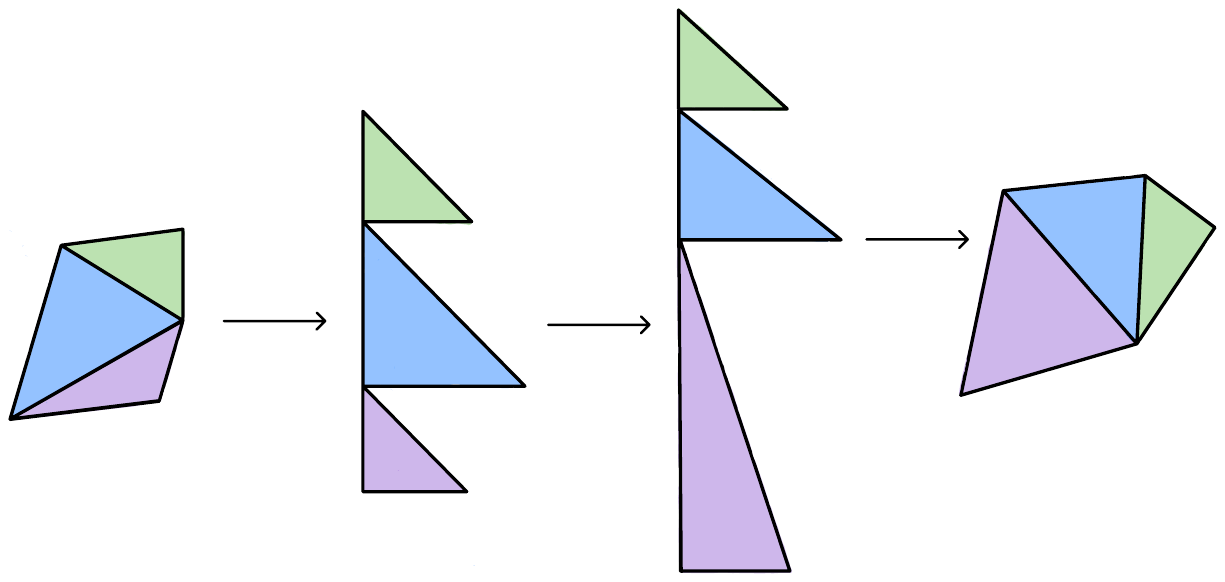}
    \quad \raisebox{1cm}{\(\xmapsto{\hspace{0.2cm}\nabla\varphi\hspace{0.2cm}}\)} \quad
    \includegraphics[scale=0.5]{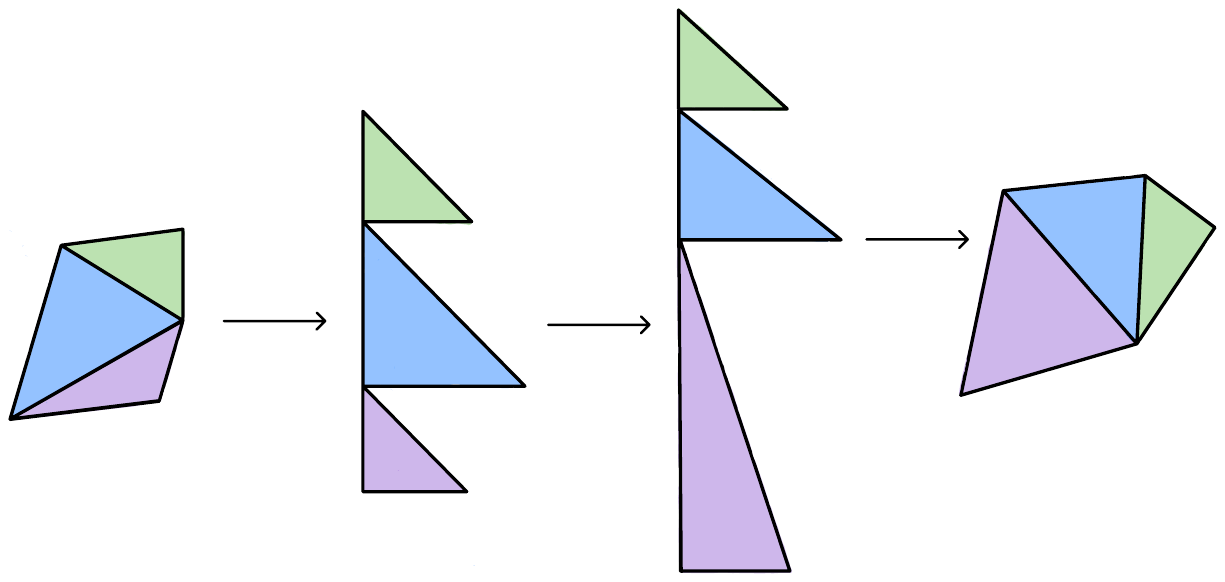}
    \quad \raisebox{1cm}{\(\xmapsto{\hspace{0.2cm}m_2\hspace{0.2cm}}\)} \quad
    \includegraphics[scale=0.5]{Figures/triangle-2.pdf}
    \]
    \caption{The decomposition of $\phi_{\mathscr{L}}$ per \Cref{prop: lagrange.decomposition}.}
    \label{fig:placeholder}
\end{figure}

\begin{proof}
Since $\phi_{\mathscr L}$ is piecewise affine on $\Omega$ and $\Omega$ is bounded, we may write
\[
\Omega=\bigcup_{j=1}^n \triangle_j,
\]
where $\{\triangle_j\}_{j=1}^n$ is a finite triangulation of $\Omega$ into non-overlapping $d$-simplices
(we ignore boundaries, which are null sets).
On each simplex $\triangle_j$, the map $\phi_{\mathscr L}$ is affine:
\[
\phi_{\mathscr L}(x)=A_jx+b_j\qquad x\in \triangle_j,
\]
and since $\phi_{\mathscr L}$ is a homeomorphism onto its image, each $A_j$ is invertible and
$|\phi_{\mathscr L}(\triangle_j)|>0$.

\medskip
\noindent\textbf{Step 1: A tower.}
Let $V_j\coloneqq |\triangle_j|>0$, and set $s_j\coloneqq V_j^{1/d}$.
Define partial sums $H_j\coloneqq \sum_{i<j}s_i$ and $H\coloneqq \sum_{j=1}^n s_j$.
Fix a reference simplex $\triangle_{\mathrm{ref}}\subset\R^d$ of volume $1$ with the property that
\[
\mathrm{proj}_{x_1}(\triangle_{\mathrm{ref}})=[0,1].
\]
Choose $L>0$ so large that the interval
\[
S\coloneqq [L,L+H+1]\times[-1,1]^{d-1}
\]
is disjoint from $\Omega$ and also disjoint from $\phi_{\mathscr L}(\Omega)$.
For each $j$ define the auxiliary simplex
\[
\triangle'_j \coloneqq  s_j\,\triangle_{\mathrm{ref}} + (L+H_j)e_1.
\]
Then $|\triangle'_j|=s_j^d|\triangle_{\mathrm{ref}}|=V_j=|\triangle_j|$, and
the $x_1$-projections are pairwise disjoint:
\[
I_j\coloneqq \mathrm{proj}_{x_1}(\triangle'_j)=[L+H_j,\ L+H_j+s_j),
\qquad I_j\cap I_i=\varnothing\ \text{ for }i\neq j.
\]
Set $T\coloneqq \bigcup_{j=1}^n \triangle'_j\subset S$.

\medskip
\noindent\textbf{Step 2: Constructing $m_1$.}
For each $j$ choose an affine bijection
\[
B_j:\triangle_j\to \triangle'_j
\]
sending vertices to vertices, and choose the vertex ordering so that $\det\nabla B_j=+1$
(this is always possible since $|\triangle_j|=|\triangle'_j|$).
Define $m_1:\R^d\to\R^d$ by the ``swap'' rule
\[
m_1(x)\coloneqq 
\begin{cases}
B_j(x), & x\in \triangle_j,\\
B_j^{-1}(x), & x\in \triangle'_j,\\
x, & x\notin \Omega\cup T.
\end{cases}
\]
Because $\Omega$ and $T$ are disjoint and each $B_j$ is bijective,
$m_1$ is a measurable bijection of $\R^d$ with measurable inverse (given by the same formula).
Moreover $m_1$ is affine on each polyhedral piece, hence $m_1\in  \mathsf{BV}_{\mathrm{loc}}(\R^d;\R^d)$,
and $|\det\nabla m_1|=1$ a.e. (indeed it equals $1$ on each affine piece).
Therefore $m_1$ preserves Lebesgue measure.

\medskip
\noindent\textbf{Step 3: The compressible factor $g=\nabla\varphi$.}
Define the target Jacobian factors
\[
\lambda_j\coloneqq \frac{|\phi_{\mathscr L}(\triangle_j)|}{|\triangle_j|}=\frac{|\phi_{\mathscr L}(\triangle_j)|}{V_j}>0.
\]
We now build $\psi:\R\to\R$ convex with $\psi'$ continuous, strictly increasing and piecewise affine,
such that
\[
\psi''(x_1)=\lambda_j\quad\text{for }x_1\in I_j,
\qquad
\psi''(x_1)=1\quad\text{for }x_1\notin \bigcup_{j=1}^n I_j,
\]
where $\psi''$ is understood in the a.e.\ sense (piecewise constant).
Fix the normalization $\psi'(x_1)=x_1$ for $x_1\le L-1$; integrating $\psi''$ produces a continuous,
strictly increasing, piecewise affine function $\psi'$ (and a convex $\psi$).

Define
\[
\varphi(x)\coloneqq \psi(x_1)+\frac12\sum_{k=2}^d x_k^2,
\qquad
g(x)\coloneqq \nabla\varphi(x)=(\psi'(x_1),x_2,\dots,x_d).
\]
Then $\det\nabla g(x)=\psi''(x_1)$ a.e. In particular, on each $\triangle'_j$ we have $x_1\in I_j$,
hence $\det\nabla g\equiv \lambda_j$ a.e.\ on $\triangle'_j$, and therefore
\[
|g(\triangle'_j)|=\int_{\triangle'_j}\det\nabla g\diff x=\lambda_j|\triangle'_j|
=\lambda_jV_j=|\phi_{\mathscr L}(\triangle_j)|.
\]
Set $\hat\triangle_j\coloneqq g(\triangle'_j)$ and $\hat T\coloneqq \bigcup_j\hat\triangle_j$.
By taking $L$ large enough initially, we may assume $\hat T$ is disjoint from $\phi_{\mathscr L}(\Omega)$.

\medskip
\noindent\textbf{Step 4: Constructing $m_2$.}
For each $j$, let $v_0^{(j)},\dots,v_d^{(j)}$ be the vertices of $\triangle_j$ and define
\[
y_i^{(j)}\coloneqq \phi_{\mathscr L}(v_i^{(j)})\qquad(i=0,\dots,d).
\]
Let $\hat v_i^{(j)}\coloneqq g(B_j(v_i^{(j)}))$ be the corresponding vertices of $\hat\triangle_j$.
Since $|\hat\triangle_j|=|\phi_{\mathscr L}(\triangle_j)|$, there exists a unique affine map
\[
C_j:\hat\triangle_j\to \phi_{\mathscr L}(\triangle_j)
\]
such that $C_j(\hat v_i^{(j)})=y_i^{(j)}$ for all $i$, and again we choose the vertex ordering so
that $\det\nabla C_j=+1$.

Define $m_2:\R^d\to\R^d$ by
\[
m_2(x)\coloneqq 
\begin{cases}
C_j(x), & x\in \hat\triangle_j,\\
C_j^{-1}(x), & x\in \phi_{\mathscr L}(\triangle_j),\\
x, & x\notin \hat T\cup \phi_{\mathscr L}(\Omega).
\end{cases}
\]
As before, this is a measurable bijection with measurable inverse, it lies in $ \mathsf{BV}_{\mathrm{loc}}$,
satisfies $|\det\nabla m_2|=1$ a.e.\ and preserves Lebesgue measure.

\medskip
\noindent\textbf{Step 5: The factorization on $\Omega$.}
Fix $j$ and $x\in\triangle_j$. Then $m_1(x)=B_j(x)\in\triangle'_j$,
hence $g(m_1(x))\in\hat\triangle_j$, and finally $m_2(g(m_1(x)))\in\phi_{\mathscr L}(\triangle_j)$.
Moreover, on $\triangle_j$ the map $m_2\circ g\circ m_1$ is affine:
$m_1$ is affine on $\triangle_j$, $g$ is affine on $\triangle'_j$ because $\psi'$ is affine on $I_j$,
and $m_2$ is affine on $\hat\triangle_j$.
By construction it maps each vertex $v_i^{(j)}$ to $y_i^{(j)}=\phi_{\mathscr L}(v_i^{(j)})$.
Since two affine maps that agree on the $d+1$ vertices of a simplex agree everywhere on that simplex,
we conclude that
\[
(m_2\circ g\circ m_1)(x)=\phi_{\mathscr L}(x)\qquad\text{for all }x\in\triangle_j.
\]
As $j$ was arbitrary, this holds on all of $\Omega$, completing the proof.
\end{proof}

\subsubsection{Approximation through flows}\label{subsec: approx.flows}

\subsubsection*{Elementary flows}

The following lemmas will be of great use in several subsequent arguments.

\begin{lemma}\label{lem: 1d} Fix $T>0$. 
    \begin{enumerate}
        \item Let $d=1$. The solution to \eqref{eq: neural.ode} for $a\equiv1$ is
        \begin{equation*}
            \upphi_{(w,b)}^t(x)= \left(e^{wt}(x-b)+b\right)\mathbf{1}_{x\geq b} + x\mathbf{1}_{x\leq b}
        \end{equation*}
        and for $a\equiv-1$,
        \begin{equation*}
            \upphi_{(w,b)}^t(x)=\left(e^{-wt}(x-b)+b\right)\mathbf{1}_{x\leq b} + x\mathbf{1}_{x\geq b}.
        \end{equation*}
        \item Let $h,\uptau\geq0$ and $c\in \mathbb{R}$. There exists a flow map $\upvarphi_{\theta}^t$ of \eqref{eq: neural.ode} corresponding to piecewise constant $\theta=(w,a,b)$ with $2$ switches such that
        \begin{equation*}
            \upvarphi_{\theta}^T(x)=(x+\uptau)\mathbf{1}_{x\geq c+h}+x\mathbf{1}_{x\leq c},
        \end{equation*}
       with $a\equiv 1$ and
        \begin{equation*}
        \|w\|_{L^\infty(0,T)}\leq \frac{1}{T}\log\left(1+\frac{\uptau}{h}\right),\qquad \|b\|_{L^\infty(0,T)}\leq c+\uptau+h.
        \end{equation*}
    \end{enumerate}
\end{lemma}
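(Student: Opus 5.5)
For part 1, I will solve the scalar ODE $\dot x=w(ax+b)_+$ directly; here $w$, $b$ are constants and $a=\pm1$. Take first $a\equiv1$. If $x(0)\le b$, the right-hand side vanishes at $t=0$, and by uniqueness (the field $x\mapsto w(x-b)_+$ is Lipschitz) the constant trajectory $x(t)\equiv x$ is the solution. If $x(0)>b$, set $y=x-b$. As long as $y>0$ it solves $\dot y=wy$, so $y(t)=e^{wt}(x-b)$, and this stays positive for all $t$. That gives $\phi^t(x)=e^{wt}(x-b)+b$ on $\{x\ge b\}$; the two formulas agree at $x=b$. For $a\equiv-1$ the field is $w(b-x)_+$, active only on $\{x<b\}$. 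With $z=b-x$ we get $\dot z=-wz$, hence $z(t)=e^{-wt}(b-x)$ and $x(t)=b-e^{-wt}(b-x)=e^{-wt}(x-b)+b$. This matches the stated formula.

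For part 2, I will compose two flows of the type in part 1 with $a\equiv1$, running each on a time interval of length $T/2$. One can also split the time unevenly, which is how the stated sup bound on $w$ is reached. The plan is as follows.

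\textbf{Stretch.} On $[0,T/2]$ take $b=c$ and choose $w_1$ so that $e^{w_1T/2}=\lambda\coloneqq(h+\uptau)/h$. Points with $x\le c$ stay fixed. The point $c+h$ is sent to $c+h+\uptau$, and a general $x\ge c$ is sent to $c+\lambda(x-c)$.

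\textbf{Compress.} On $[T/2,T]$ take $b=c+\uptau+h$ and $w_2=-w_1$. This maps $y\ge c+\uptau+h$ to $c+\uptau+h+\lambda^{-1}(y-c-\uptau-h)$ and leaves smaller $y$ untouched.

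\textbf{Check.} For $x\ge c+h$ the image after the first step is $c+\lambda(x-c)\ge c+h+\uptau$, and the composition gives
\[
c+\uptau+h+\lambda^{-1}\bigl(c+\lambda(x-c)-c-\uptau-h\bigr)=x+\uptau+\lambda^{-1}c-\lambda^{-1}c-\lambda^{-1}(\uptau+h)+\ldots
\]
The arithmetic must close up to exactly $x+\uptau$, and this is the step I expect to need care. The straightforward version of the algebra yields $x+\uptau-\lambda^{-1}(\ldots)$ with a leftover term. The fix is to stop trying to undo the stretch globally and instead make the compression an exact translation. Reversing the order gives a cleaner route: first compress the half-line $[c,\infty)$ toward $c$, then translate. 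A ReLU flow, however, cannot translate a half-line rigidly, because it only scales about the hinge point $b$.

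The cleaner structural fact I will use is this: after the stretch, the image of $x\ge c+h$ is an affine map with slope $\lambda$. A second scaling about a hinge $b_2$ with factor $\lambda^{-1}$ then gives slope $1$ and intercept $b_2(1-\lambda^{-1})+\lambda^{-1}(c(1-\lambda))$. Setting $b_2=c+\uptau\lambda/(\lambda-1)$ makes this intercept equal to $\uptau$. We also need $b_2\le c+h+\uptau$, so that every point with $x\ge c+h$ lies beyond the second hinge. This condition pins down $\lambda$ through $\uptau\lambda/(\lambda-1)\le h+\uptau$, i.e.\ $\lambda\ge1+\uptau/h$, with equality giving $b_2=c+\uptau+h$. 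Meanwhile points with $x\le c$ stay below both hinges, so they are fixed by both steps. Points in $(c,c+h)$ are left unconstrained, as the statement permits.

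\textbf{Bounds and switch count.} With $\lambda=1+\uptau/h$, the sup bound is $|w|=\frac{1}{T'}\log(1+\uptau/h)$, where $T'$ is the sub-interval length. To attain exactly $\frac1T\log(1+\uptau/h)$, run both pieces with time $T$ in the rescaled sense, or accept a factor $2$ absorbed by time-rescaling. I will state the lemma's constants after fixing this normalization. Finally, $|b|\le c+\uptau+h$ holds since both hinges lie in $[c,c+\uptau+h]$, and $\theta$ has at most two switches.
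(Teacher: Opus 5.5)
Your construction is the paper's: stretch about the hinge $c$ by $\lambda=1+\uptau/h$, then contract about the hinge $c+\uptau+h$ by $\lambda^{-1}$. The paper writes this as $\upphi_{(-w,c+\upeta)}\circ\upphi_{(w,c)}$ with $\upeta=\uptau+h$ and $e^{-wt}=h/(h+\uptau)$. Your part 1 is correct.

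The detour in your ``Check'' comes from an arithmetic slip. The direct computation closes,
\[
c+\uptau+h+\lambda^{-1}\bigl(\lambda(x-c)-\uptau-h\bigr)=x+\uptau+h-\lambda^{-1}(\uptau+h)=x+\uptau,
\]
since $\lambda^{-1}(\uptau+h)=h$. Your general-hinge computation only rediscovers $b_2=c+\uptau+h$, so the remarks about reversing the order can be dropped.

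Your factor-$2$ concern is real, and you should state it as a correction rather than defer it. The paper runs each stage for time $t$ and solves $wt=\log(1+\uptau/h)$, so its flow actually lives on $[0,2t]$. On a horizon $T$ with $a\equiv1$, the stated bound cannot be attained, and no time-rescaling on $[0,T]$ can remove the factor. Here is why.
\begin{enumerate}
\item For all but finitely many $x$, $\partial_x\upvarphi^T_\theta(x)=\exp\bigl(\int_{A_x}w\bigr)$, where $A_x\coloneqq\{t:\upvarphi^t_\theta(x)>b(t)\}$. This holds because trajectories never cross a hinge within a constancy interval.
\item Since the flow preserves order, $A_x\subseteq A_y$ for $x<y$.
\item The derivative $\partial_x\upvarphi^T_\theta$ is piecewise constant with average $1+\uptau/h$ on $(c,c+h)$. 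Hence some $x_0$ in that interval has $\int_{A_{x_0}}w\ge\log(1+\uptau/h)$.
\item A generic $y\ge c+h$ has $\int_{A_y}w=0$. Therefore $\int_{A_y\setminus A_{x_0}}w\le-\log(1+\uptau/h)$.
\item Combining, $T\|w\|_{L^\infty(0,T)}\ge\int_0^T|w|\ge2\log(1+\uptau/h)$.
\end{enumerate}
So your equal split, which gives $\frac2T\log(1+\uptau/h)$, is optimal. The lemma should read $\frac2T$, or use horizon $2T$; this constant is not used elsewhere in the paper.

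Likewise, ``both hinges lie in $[c,c+\uptau+h]$'' only yields $\|b\|_{L^\infty}=\max(|c|,|c+\uptau+h|)$. This is $\le c+\uptau+h$ only when $c\ge-(\uptau+h)/2$. In general the bound, both yours and the paper's, should be $|c|+\uptau+h$.
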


\begin{proof}
    The first point is straightforward.
    
    As for the second, let $\upvarphi_{\theta}^t=\upphi_{(-w,c+\upeta)}^t\circ\upphi_{(w,c)}^t$
    with $\upeta>0$ to be determined later on. Elementary computations yield
    \begin{equation*}
        \left(\upphi_{(-w,c+\upeta)}^t\circ\upphi_{(w,c)}^t\right)(x)= x+\left(1-e^{-wt}\right)\upeta\hspace{1cm}\text{ if }x\geq c+e^{-wt}\upeta.
    \end{equation*}
   We are thus looking for a solution to the system $h=e^{-wt}\upeta$ and $\uptau=(1-e^{-wt})\upeta$
    that is to say, $\uptau+h=\upeta$ and $\log(\frac{h}{h+\uptau})=-wt$.
    Since $c+h>c$, for $x\leq b$ one has $(\upphi_{(-w,c+\upeta)}^t\circ\upphi_{(w,c)}^t)(x)=x$. The estimates easily follow.
\end{proof}

\begin{lemma}\label{lem: several.d}
Let $d\ge 2$. Fix $h>0$, $\uptau,b\in\mathbb{R}$, $a\in\{-1,1\}$, and coordinate vectors $e_k,e_l$ with $l\neq k$.
Set $n\coloneqq a\,e_k$ and $u\coloneqq \operatorname{sgn}(\uptau)\,e_l$. Then there exists a piecewise-constant control $\theta:[0,T]\to\mathbb{R}^{2d+1}$ with one switch such that the time-$T$ flow map $\upvarphi^T_\theta$ of \eqref{eq: neural.ode} satisfies
\[
  \upvarphi^T_\theta(x)=
  \begin{cases}
    x+\uptau\,e_l & \text{if } x\cdot n + b - h \ge 0,\\[2mm]
    x             & \text{if } x\cdot n + b \le 0.
  \end{cases}
\]
\end{lemma}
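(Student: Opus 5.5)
The idea is to use that a perceptron whose weight $w$ is orthogonal to its direction $a$ generates an exact shear flow, and to take the difference of two such shears with biases offset by $h$, realized one after the other on the two halves of $[0,T]$, so that there is a single switch. Throughout, $s(x)\coloneqq x\cdot n$ with $n=a\,e_k$, and $u=\operatorname{sgn}(\uptau)e_l$ with $l\neq k$.

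\textbf{Step 1: explicit flow of a single shear.} For a constant control $\theta=(\omega u,n,\beta)$ with $\omega\in\R$, the velocity $\omega u\,(n\cdot x+\beta)_+$ is parallel to $e_l\perp n$. Hence along any trajectory $\frac{\diff}{\diff t}(n\cdot x(t))=0$, so $n\cdot x(t)=s(x)$ for all $t$. The ODE therefore reduces to $\dot x=\omega u\,(s(x)+\beta)_+$ with a constant right-hand side, and the time-$t$ flow is
\[
x\mapsto x+t\,\omega\,(s(x)+\beta)_+\,u .
\]
In particular this map does not change the value of $s$.

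\textbf{Step 2: two shears and one switch.} On $[0,T/2)$ take $\theta\equiv(\omega_1 u,n,b)$, and on $[T/2,T]$ take $\theta\equiv(\omega_2 u,n,b-h)$. Since the first map leaves $s$ unchanged, the second map sees the same value $s(x)$. The composed flow is therefore
\[
\upvarphi^T_\theta(x)=x+\tfrac T2\Big(\omega_1(s(x)+b)_+ +\omega_2(s(x)+b-h)_+\Big)u .
\]
Choose $\omega_1=-\omega_2=\frac{2|\uptau|}{hT}$. The bracket becomes $\frac{|\uptau|}{h}\,r(s(x)+b)$, where
\[
r(z)\coloneqq z_+-(z-h)_+ .
\]
This function satisfies $r(z)=0$ for $z\le0$ and $r(z)=h$ for $z\ge h$.

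\textbf{Step 3: reading off the two regions.} If $x\cdot n+b\le0$, the displacement is zero, so $\upvarphi^T_\theta(x)=x$. If $x\cdot n+b-h\ge0$, the displacement is $|\uptau|\,u=\uptau\,e_l$. This is exactly the claimed formula, and $\theta$ has one switch at $t=T/2$. If $\uptau=0$, the control $\theta\equiv0$ works trivially.

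There is no real obstacle here. The only point needing care is the conservation of $s$ along both pieces, which is what makes the composition of the two shears equal to the sum of their displacements; this relies on $l\neq k$.
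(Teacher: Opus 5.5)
Your proposal is correct and follows essentially the same route as the paper: two shears along $u\perp n$ with biases $b$ and $b-h$ and opposite weights, whose displacements add because $x\cdot n$ is conserved, giving $x+\frac{|\uptau|}{h}\bigl[(x\cdot n+b)_+-(x\cdot n+b-h)_+\bigr]u$. The differences are cosmetic: the paper fixes $T=2|\uptau|/h$ with unit weights and applies the $b-h$ shear first, whereas you allow arbitrary $T$ by rescaling the weights and also handle the degenerate case $\uptau=0$ explicitly.
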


\begin{proof}
For $t\ge 0$ and $\beta\in\mathbb{R}$, define the shear
\[
  \upphi^{t}_{(n,u,\beta)}(x)\coloneqq x + (x\cdot n+\beta)_+ tu.
\]
Take $t\coloneqq |\uptau|/h$ and set $T\coloneqq 2t$. Consider $\upvarphi^T_\theta \;\coloneqq\; \upphi^{t}_{(n,u,b)} \circ \upphi^{t}_{(n,-u,b-h)}$.
This corresponds to the piecewise-constant
\[
  \theta(s)=
  \begin{cases}
    (w,a,b)=( -u,\, n,\, b-h), & s\in[0,t),\\
    (w,a,b)=( \phantom{-}u,\, n,\, b),   & s\in[t,2t)= [t,T).
  \end{cases}
\]
Since $u\perp n$ (because $e_l\cdot e_k=0$ for $l\neq k$), along each stage the quantity $x(s)\cdot n$ is constant:
\[
  \frac{\mathrm{d}}{\mathrm{d}s}(x(s)\cdot n+\beta)
  = \dot x(s)\cdot n
  = (x(s)\cdot n+\beta)_+\,u\cdot n
  = 0,
\]
so the activation depends only on the initial $x$ for that stage. Therefore, for all $x$,
\[
  \upvarphi^T_\theta(x)
  = x + \left[(x\cdot n+b)_+ - (x\cdot n+b-h)_+\right] tu.
\]
Analyzing the three regions:
\begin{itemize}
\item If $x\cdot n+b\ge h$, both terms are positive and their difference is $h$, hence
$\upvarphi^T_\theta(x)=x+h\,t\,u=x+\uptau\,e_l$.
\item If $x\cdot n+b\le 0$, both terms vanish and $\upvarphi^T_\theta(x)=x$.
\item If $0<x\cdot n+b<h$, only the first term is positive, yielding
$\upvarphi^T_\theta(x)=x+\frac{|\uptau|}{h}(x\cdot n+b)\,u$, which lies in the (unspecified) middle strip.
\end{itemize}
Finally, each stage is the time-$t$ flow of the vector field $x\mapsto \pm u\,(x\cdot n+\beta)_+$, whose divergence is
\[
  \nabla_x\cdot(u\,(x\cdot n+\beta)_+)
  = u\cdot n\,\mathbf{1}_{\{x\cdot n+\beta>0\}}=0,
\]
so Lebesgue measure is preserved at a.e.\ time; by Liouville’s formula, the Jacobian determinant of each stage equals $1$, and hence the composition is measure preserving.
\end{proof}

\subsubsection*{Incompressible part}

\begin{lemma} \label{lem: univ.approx.flow}

Suppose $\Omega\subset\R^d$ is a bounded domain.
Let $m:\mathbb{R}^d\to\mathbb{R}^d$ be a bijective measure-preserving map. 
For any $\varepsilon>0$ there exists a flow $m_\theta^1:\R^d\to\R^d$ of \eqref{eq: neural.ode} associated to piecewise-constant $\theta$ with finitely many switches, which is measure-preserving, and such that
    \begin{equation*}
        \|m-m_\theta^1\|_{L^p(\Omega)}\leq \varepsilon.
    \end{equation*}
\end{lemma}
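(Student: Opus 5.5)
Following Brenier--Gangbo \cite{brenier2003approximation}, the plan is to reduce the statement to a finite permutation problem and then realize each transposition by shears from \Cref{lem: several.d}.

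\textbf{Step 1: reduction to a permutation of cubes.} Enlarge $\Omega$ to a cube $Q$ containing both $\Omega$ and $m(\Omega)$ up to a small-measure set; this is possible since $m\in L^p_{\mathrm{loc}}$ and $\Omega$ is bounded. Fix a dyadic grid of mesh $\delta$ on $Q$ with cubes $\{Q_i\}_{i=1}^n$. Use the classical fact (Lax, Brenier--Gangbo) that a measure-preserving bijection can be approximated in $L^p$ by a map $\sigma_\delta$ that sends each $Q_i$ by translation onto $Q_{\pi(i)}$ for some permutation $\pi\in S_n$. To obtain $\pi$, look at the doubly stochastic matrix $M_{ij}=|Q_i\cap m^{-1}(Q_j)|/|Q_i|$ (mass leaking outside $Q$ is handled by a small error). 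By Birkhoff--von Neumann it is a convex combination of permutation matrices. Refining each $Q_i$ into subcubes and assigning subcubes proportionally to the weights of $M_{ij}$ gives a cube permutation at a finer scale. Its $L^p$ distance to $m$ is controlled by $\delta$ plus the modulus of continuity of $m$ in $L^p$, which tends to $0$ by Lusin's theorem and density of continuous maps. It therefore suffices to approximate the translation-permutation $\sigma_\delta$.

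\textbf{Step 2: decomposing into transpositions.} Write $\pi$ as a product of transpositions of cubes. Reduce further to transpositions of adjacent cubes, i.e.\ neighbours along some coordinate direction, which is possible because the adjacency graph of the grid is connected. It then suffices to approximate, in $L^p(Q)$ and up to arbitrarily small error, the map swapping two adjacent cubes $Q_i, Q_j$ while fixing all other cubes.

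\textbf{Step 3: the swap via shears (main obstacle).} This is the delicate step, since each shear of \Cref{lem: several.d} acts on a whole half-space and not on a single cube. The plan is to combine such shears so that all unwanted displacements cancel.

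1. Rotate the pair of cubes by a quarter turn inside the $2\times1$ block $Q_i\cup Q_j$. Split the block into thin slabs and build a discrete ``rotation'' as a composition of three shears, in the spirit of $x+\tau e_l$ applied on the half-spaces $\{x\cdot n+b\ge h\}$ with alternating directions $e_k,e_l$.
2. Localize each shear to a slab by pairing it with an opposite shear whose threshold is shifted, as in the proof of \Cref{lem: several.d}. Then only points with $x_k$ in a band move, and the net displacement vanishes outside the band.
3. Obtain the remaining localization in the other coordinates through commutator-type compositions $S_1S_2S_1^{-1}S_2^{-1}$ of such band shears. Because shears in orthogonal directions commute outside the intersection of their bands, the result displaces only points in a box.
4. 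Accept that the transition strips of width $h$ are mapped incorrectly; their total measure is $O(h)$ times the number of shears. Choose $h$ small after fixing $\delta$ and the number of transpositions, so that the total $L^p$ error, which is bounded by $\mathrm{diam}(Q)\cdot|\text{bad set}|^{1/p}$, is below $\varepsilon/2$.

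Every elementary stage is divergence-free, so the composition $m^1_\theta$ is measure preserving. It has finitely many switches because it is a finite composition of one-switch controls. Rescaling time reparametrizes the total horizon to $[0,1]$ by multiplying $w$, which does not change the flow maps.
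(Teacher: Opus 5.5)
Your Steps~1 and~2 are fine. Step~1 (completing the sub-doubly-stochastic matrix, then Birkhoff and refinement) is a legitimate classical substitute for the paper's Lusin-based assignment of cells to cores, and Step~2 is exactly the paper's reduction to adjacent transpositions.

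The gap is Step~3, which is the real content of the lemma. As written, it does not produce a flow that exchanges $Q_i$ and $Q_j$ while fixing the other cubes.

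\emph{The quarter turn.} A quarter turn maps the $2\times1$ block $Q_i\cup Q_j$ onto a $1\times2$ block, so it does not exchange the two cubes. You need a half turn, e.g.\ two three-shear quarter turns.

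\emph{Localization.} The factorization of a rotation into three shears is an identity between shears acting linearly on everything they touch. A (staircase) shear in direction $e_l$ driven by $x_k$ moves the whole slab $\{x_k\in J\}$, which is unbounded in the remaining coordinates. The composition therefore also moves every other cube meeting those slabs.

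\emph{Commutators.} These do not repair the localization in the way you describe. A commutator of band maps is not a localized copy of either factor that can be fed back into the three-shear formula. For instance, in the plane take $A(x)=x+e_2$ on $\{x_1\in[0,1)\}$ and $B(x)=x+e_1$ on $\{x_2\in[0,1)\}$, both the identity elsewhere. Composing right to left, $ABA^{-1}B^{-1}$ is exactly the $3$-cycle $[0,1)^2\mapsto[0,1)\times[1,2)\mapsto[1,2)\times[0,1)\mapsto[0,1)^2$ and fixes every other unit square. So items~1 and~3 are incompatible. Moreover, if you use such commutators as your localized generators, you only reach even permutations of cells, so the transposition your Step~2 requires is not available. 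You would need an extra device, e.g.\ refining once more so that the permutation of subcubes induced by $\pi$ is even, and nesting commutators to localize in the other $d-2$ coordinates. None of this is carried out, and item~4 only bounds the transition strips of a construction that has not been specified.

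The missing idea is the one the paper uses: \emph{isolation}. In the paper's notation, $h$ is the mesh and $\delta$ the gap.
\begin{enumerate}
\item Using only the half-space translations of \Cref{lem: several.d}, the paper builds $\upphi$ that fixes $\square_i\cup\square_{i+he_1}$ and pushes every other core into the half-space $\{x_2\le i_2-\delta\}$.
\item It then exchanges the two cores by a short composition $\uppsi$ of such translations (shift, lift, slide, lower) that is the identity on that half-space.
\item Finally it undoes the isolation, setting $m=\upphi^{-1}\circ\uppsi\circ\upphi$.
\end{enumerate}
Because the paper works with shrunken cores $\square_{ih\delta}$ separated by gaps of width $\delta$, all transition strips lie in the gaps. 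The cores are therefore swapped exactly, and a single transposition is obtained directly, with no parity issue.
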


\begin{proof}
The argument is a standard scheme going back to \cite{shnirel1987geometry, shnirelman94Generalized} and \cite{brenier2003approximation}. For clarity we first treat the case $\Omega=[-L,L]^d$ as in
\Cref{lem: mp.neural.ode.swap}; a general bounded domain can be covered by such a
box and the divergence-free vector fields we build can be taken to vanish outside
$\Omega$. We also implicitly restrict attention to the natural setting in which
$m$ maps $\Omega$ onto itself a.e.; a routine cut-and-paste on a set of arbitrarily
small measure reduces the general case to this one.

\smallskip
\noindent\textbf{Step 1. Cores.}
Fix $\varepsilon>0$ and $1\le p<\infty$. By Lusin’s theorem there exists a
compact set $K\subset\Omega$ such that $|\Omega\setminus K|\le\eta$ and $m|_{K}$ is uniformly continuous, where $\eta>0$ will be chosen later. 
Let $\omega$ be a modulus of continuity for
$m|_K$.
Choose a mesh size $h>0$ so small that
\[
\omega(h\sqrt d)\le\frac{\varepsilon}{8}
\quad\text{and}\quad
|\{x\in\Omega\colon \operatorname{dist}(x,\partial\Omega)<h\}|\le\eta.
\]
Let $h>\delta>0$ be so small that
\[
4\omega(h\sqrt d)\le\delta
\quad\text{and}\quad
\left|\bigcup_{i\in h\mathbb Z^d\cap\Omega}
\left((i+[0,h]^d)\setminus\square_{ih\delta}\right)\right|\le \eta,
\]
where $\square_{ih\delta}\coloneqq [i_1,i_1+h-\delta]\times\cdots\times[i_d,i_d+h-\delta]$.
Set
\[
U\coloneqq\bigcup_{i\in h\mathbb Z^d\cap\Omega}\square_{ih\delta}.
\]
Then, for $h,\delta$ sufficiently small (depending only on $\eta$ and $d$),
\begin{equation}\label{eq:U-large}
|\Omega\setminus U|\le C_d\left(\frac{\delta}{h}+h\right)|\Omega|\le 2\eta.
\end{equation}

\noindent\textbf{Step 2: On a large set, $m$ sends each atom into the core of a single atom.}
For each $j$, define the shrunken core
\[
\square_{jh\delta}^{\circ}\coloneqq\left\{y\in \square_{jh\delta}\colon
\operatorname{dist}(y,\partial \square_{j,h,\delta})\ge \frac{\delta}{4}\right\}.
\]
Because $m|_K$ is uniformly continuous and $4\omega(h\sqrt d)\le\delta$,
for every index $i$ with $K_i\coloneqq K\cap \square_{ih\delta}\neq\varnothing$ we
have $\operatorname{diam}(m(K_i))\le \delta/4$. Hence, either $m(K_i)\subset \square_{jh\delta}^{\circ}$ for a
unique $j$ (\emph{good}), or $m(K_i)$ intersects the grid stripes
$\Omega\setminus\bigcup_j \square_{jh\delta}^{\circ}$ (\emph{bad}).

Let
\[
G\coloneqq\bigcup_{\text{good }i}K_i,\qquad B\coloneqq\Omega\setminus G.
\]
Since $m$ is measure-preserving,
\begin{align*}
|B|\le|\Omega\setminus K|+
\left|m^{-1}\left(\Omega\setminus\bigcup_j\square_{jh\delta}^{\circ}\right)\right|=|\Omega\setminus K|+\left|\Omega\setminus\bigcup_j\square_{jh\delta}^{\circ}\right|\le 3\eta.
\end{align*}
For each good $i$ there is a unique $j=j(i)$ with
$m(K_i)\subset\square_{j(i)h\delta}^{\circ}$. Injectivity of $m$ implies that
$i\mapsto j(i)$ is injective on the set of good indices. Extend this to a full permutation $\sigma$ of all indices by pairing the remaining indices arbitrarily. Define the block-permutation map $P_\sigma$ by translating each
$\square_{ih\delta}$ onto $\square_{\sigma(i)h\delta}$ (and setting
$P_\sigma=\mathsf{id}$ on $\Omega\setminus U$). This $P_\sigma$ is
measure-preserving and bijective.

\smallskip
\noindent\textbf{Step 3: $L^p$ control of $m-P_\sigma$.}
Split $\Omega=G\cup B$.
If $x\in G\cap\square_{ih\delta}$ with $i$ good, then $m(x)$ and
$P_\sigma(x)$ both lie in $\square_{\sigma(i)h\delta}$, hence
\[
|m(x)-P_\sigma(x)|\le \operatorname{diam}(\square_{\sigma(i)h\delta})\le C_d\,h.
\]
Therefore
\[
\int_{G} |m-P_\sigma|^p \diff x \le (C_d h)^p |\Omega|.
\]
On the bad set $B$ we use the trivial bound
$|m(x)-P_\sigma(x)|\le 2\operatorname{diam}(\Omega)$ to get
\[
\int_{B} |m-P_\sigma|^p \diff x \le
(2\operatorname{diam}(\Omega))^p |B|
\le (2\operatorname{diam}(\Omega))^p 3\eta.
\]
Hence
\begin{equation}\label{eq:first-approx}
\|m-P_\sigma\|_{L^p(\Omega)}^p
\le (C_d h)^p|\Omega| + (2\operatorname{diam}(\Omega))^p 3\eta.
\end{equation}
We will choose $h$ and $\eta$ so that the right-hand side is $\le
(\varepsilon/2)^p$.

\medskip
\noindent\textbf{Step 4: Realize the permutation by finitely many swaps.}
Decompose $\sigma$ into disjoint cycles\footnote{A cycle is  a permutation of the form $(a_1,a_2,\dots,a_k)$ which maps $a_i\mapsto a_{i+1}$ for $i\in\{1,\dots,k-1\}$, maps $a_k\mapsto a_1$, and fixes every other element. A transposition is a $2$-cycle $(a,b)$. Two cycles are disjoint if they move disjoint sets of elements, in which case they commute. Every permutation decomposes uniquely (up to the order of the factors) into a product of disjoint cycles. Each $k$-cycle is a product of $k-1$ transpositions, e.g.
\(
(a_1,a_2,\dots,a_k)=(a_1,a_k)(a_1,a_{k-1})\cdots(a_1,a_2).
\)
We compose permutations right-to-left.}. Each $k$-cycle is a product of $k-1$
transpositions. By \Cref{lem: mp.neural.ode.swap}, each transposition
$\square_{ih\delta}\leftrightarrow \square_{jh\delta}$ is the time–one flow map
of \eqref{eq: neural.ode} with a divergence-free velocity
$v(\cdot,\theta(t))$, where $\theta(t)$ is piecewise constant over that time
segment, and all other cubes $\square_{kh\delta}$ are fixed as sets. By
concatenating the finitely many transpositions (reparameterizing time to $[0,1]$ if desired) we obtain a piecewise-constant control $\theta(t)$ with finitely
many switches whose time–one flow $\upphi\coloneqq m_\theta^1$ satisfies $\upphi(\square_{ih\delta})=\square_{\sigma(i)h\delta}$ for all $i$ and $\nabla\cdot v(\cdot,\theta(t))=0$ for all $t$.
In particular, $\upphi$ is a bijective, orientation- and measure-preserving map.

If $x\in\square_{ih\delta}$ then $\upphi(x),P_\sigma(x)\in
\square_{\sigma(i)h\delta}$, so
\[
|\upphi(x)-P_\sigma(x)|\le \operatorname{diam}(\square_{\sigma(i)h\delta})
\le C_d\,h .
\]
For $x\in \Omega\setminus U$ we use the trivial bound
$|\upphi(x)-P_\sigma(x)|\le 2\,\operatorname{diam}(\Omega)$ and
\eqref{eq:U-large}. Hence
\begin{align}\label{eq:second-approx}
\|\upphi-P_\sigma\|_{L^p(\Omega)}^p
&\le (C_d h)^p |U| + (2\operatorname{diam}(\Omega))^p |\Omega\setminus U|\nonumber\\
&\le (C_d h)^p |\Omega| + (2\operatorname{diam}(\Omega))^p 2\eta.
\end{align}

\noindent\textbf{Step 5: Conclusion.}
By the triangle inequality, \eqref{eq:first-approx} and \eqref{eq:second-approx},
\[
\|m-\upphi\|_{L^p(\Omega)}
\le \|m-P_\sigma\|_{L^p(\Omega)} + \|P_\sigma-\upphi\|_{L^p(\Omega)}
\le C_d h|\Omega|^{\frac1p} + C\eta^{\frac1p}.
\]
Choose $h$ small so that $C_d h|\Omega|^{1/p}\le\varepsilon/2$, and
$\eta$ so that $C\eta^{1/p}\le \varepsilon/2$. With this choice,
$\upphi=m_\theta^1$ is a measure-preserving time–one flow map of
\eqref{eq: neural.ode} driven by a piecewise-constant control with finitely many
switches, and $\|m-m_\theta^1\|_{L^p(\Omega)}\le \varepsilon.$
This proves \Cref{lem: univ.approx.flow}.
\end{proof}

\begin{remark}[On the reduction $m(\Omega)=\Omega$ a.e.]
If $m$ does not map $\Omega$ onto itself a.e., one may first modify $m$ on a set
of arbitrarily small measure to obtain $\tilde m$ with $\tilde m(\Omega)=\Omega$
a.e. The above construction then applies to
$\tilde m$. The incurred $L^p$-cost is $O(\eta^{1/p})$, which can be absorbed in
$\varepsilon$.
\end{remark}

\begin{lemma}\label{lem: mp.neural.ode.swap}
Fix $h>\delta>0$ and let $\Omega=[-L,L]^d$. For every $i\in h\mathbb Z^d\cap \Omega$ set $\square_{ih\delta}\coloneqq[i_1,i_1+h-\delta]\times\cdots\times[i_d,i_d+h-\delta]$. Then, for every $i,j\in h\mathbb Z^d$ there exists a measure-preserving flow map
$m:\R^d\to\R^d$ which is a solution of \eqref{eq: neural.ode} (hence bijective and
orientation-preserving) such that
\begin{align*}
&m(\square_{ih\delta})=\square_{jh\delta},\qquad
m(\square_{jh\delta})=\square_{ih\delta},\\
&m(\square_{kh\delta})=\square_{kh\delta}\quad\text{for every }k\in(h\mathbb Z^d\cap\Omega)\setminus\{i,j\}.
\end{align*}
Moreover, along the whole trajectory $t\mapsto x(t)$ of \eqref{eq: neural.ode} we have $\nabla\cdot v(x(t),\theta(t))=0$.
\end{lemma}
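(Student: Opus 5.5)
We build the swap as a composition of elementary shear flows from \Cref{lem: several.d}. Each such flow is generated by a divergence-free field $u\,(x\cdot n+\beta)_+$ with $u\perp n$. It therefore preserves Lebesgue measure, is a bijective orientation-preserving flow map, and satisfies $\nabla\cdot v=0$ along trajectories. Any composition of such flows inherits all these properties, so the only real task is the combinatorial-geometric one: arrange the shears so that every cube is moved rigidly (translated) and the prescribed permutation of cubes results.

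\textbf{Reduction to adjacent swaps.} Any transposition $(i\,j)$ is a product of transpositions of adjacent indices $i, i\pm he_k$, via the conjugation $(i\,j)=\tau(i\,k)\tau^{-1}$ along a lattice path from $i$ to $j$. It therefore suffices to swap two cubes $\square_{ih\delta}$ and $\square_{i'h\delta}$ with $i'=i+he_k$ while fixing all others as sets. Paths may leave $\Omega$ into $h\mathbb Z^d$, where there are no other cubes to disturb, so this causes no trouble.

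\textbf{Elementary moves.} The key primitive is to translate a single slab of cubes rigidly by a vector $\uptau e_l$ while leaving the rest fixed. We obtain it from a difference of two shears as in \Cref{lem: several.d}, applied twice:
\begin{itemize}
\item first with normal $+e_k$, moving everything in the half-space $x_k\ge c$ by $\uptau e_l$;
\item then with normal $-e_k$ and a threshold slightly larger, undoing the move beyond the slab.
\end{itemize}
The net effect translates the layer $\{c\le x_k\le c'\}$ by $\uptau e_l$. Outside this layer points are fixed, and points in the transition strips of width $h'\le\delta/2$ are sheared. Since cubes have side $h-\delta$ and gaps of width $\delta$, we place all transition strips inside the gaps. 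Every cube then lies entirely in a region where the map is either the identity or a pure translation.

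\textbf{Swap choreography.} To exchange $\square_{i}$ and $\square_{i'}$, pick a direction $e_l\neq e_k$, which exists since $d\ge2. Then perform three moves:
\begin{itemize}
\item Translate the layer containing only the row of $\square_i$ (a slab in $x_k$ of width $h-\delta$ around $\square_i$) in direction $e_l$ by a distance $D$ larger than $\operatorname{diam}\Omega+2L$. This pushes $\square_i$ and its whole row outside $[-L,L]^d$.
\item Translate $\square_i$ back to its row position shifted appropriately, using shears in the $e_k$ direction restricted by a slab in $x_l$ lying far from $\Omega$. This moves only the displaced row, which is now isolated.
\end{itemize}
This is in the spirit of the parking moves of \cite{brenier2003approximation}. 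The cleanest version isolates one cube at a time: slab-translate in $e_l$, then in $e_k$, then in $e_l$, with each slab chosen in the gaps. This brings the isolated cube out, a cube-sized "parking lot" is used, and the intended cycle $\square_i\to$ park, $\square_{i'}\to\square_i$, park $\to\square_{i'}$ is realized. Each move translates some cubes far away and back. Every other cube undergoes a net-zero sequence of translations, because the same slab moves are later reversed, so each such cube returns to itself.

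\textbf{Main obstacle.} The hard step is the bookkeeping that guarantees non-target cubes return exactly to themselves. We handle it with a commutator structure: write the swap as $R^{-1}\circ S\circ R$, where $R$ isolates the single cube (or row) far from $\Omega$ and $S$ acts only far away. Since the identity is unaffected by conjugation for cubes not touched by $S$, the fixing of all other cubes then follows automatically. What remains to check is that all transition strips avoid cubes at each stage; this holds once every threshold is placed at the center of a gap.
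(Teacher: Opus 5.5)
Your preliminary steps are fine and agree with the paper. Shears from \Cref{lem: several.d} are divergence-free and measure-preserving, and their inverses are shears of the same type. The reduction to adjacent transpositions along a lattice path is exactly the paper's. The gap is in the only step that makes the lemma nontrivial: you never give a sequence of shears that actually \emph{exchanges} two adjacent cubes, and the mechanism you offer in its place cannot do so. Write $\square_k=\square_{kh\delta}$ and $i'=i+he_k$. In your conjugation $R^{-1}\circ S\circ R$, suppose $R$ isolates the single cube $\square_i$ and $S$ acts only far from $\Omega$. Then $S$ is the identity on $R(\square_k)$ for all $k\neq i$, so $R^{-1}SR(\square_k)=\square_k$ for every $k\neq i$, including $R^{-1}SR(\square_{i'})=\square_{i'}$. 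By injectivity, the conjugate cannot also send $\square_i$ onto $\square_{i'}$. If instead $R$ isolates a whole row containing both cubes, then $S$ must exchange two adjacent cubes of that row while fixing the rest, which is the original problem again. Likewise, ``the same slab moves are later reversed'' does not single out the target cubes. If every move is reversed the net map is the identity, so some moves must stay un-reversed for exactly $\square_i$ and $\square_{i'}$; building and checking that asymmetry is the content of the lemma. Your announced three-move choreography also lists only two moves.

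The paper supplies the missing construction. $R=\upphi$ (\Cref{cl: isolation}) keeps $\square_i$ and $\square_{i+he_1}$ fixed and sends \emph{all other} cubes into $\{x_2\le i_2-\delta\}$. $S=\uppsi=\varphi_4\circ\varphi_3\circ\varphi_2\circ\varphi_1$ (\Cref{cl: swap}) is the identity on that half-space while routing one cube around the other: shift both right by $h$, lift the right one by $h$, slide it left by $2h$, then lower it by $h$.

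Your parking-lot idea can also be completed, for instance as $\Pi_i^{-1}\circ\Pi_{i'}^{-1}\circ\Sigma\circ\Pi_{i'}\circ\Pi_i$. Here $\Pi_j$ parks $\square_j$ far from $\Omega$ while fixing all other cubes, and $\Sigma$ exchanges the two parked cubes in the empty far field. You would then have to construct $\Sigma$ and verify that $\Pi_{i'}$ leaves the parking spot of $\square_i$ untouched. That is not automatic, because every slab move sweeps an unbounded region.

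Two further points need fixing:
\begin{itemize}
\item Your isolation sequence (slab moves along $e_l$, then $e_k$, then $e_l$) isolates a single cube only when $d=2$. For $d\ge 3$ the second slab still contains a $(d-2)$-parameter family of displaced cubes, so you must iterate over the remaining coordinates, as the paper's $\phi_{m,1},\phi_{m,2}$, $m=3,\dots,d$, do.
\item In your slab primitive, the second half-space move must again have normal $+e_k$, with threshold near $c'$ and translation $-\uptau e_l$. With normal $-e_k$ it translates a lower half-space and does not give a slab translation.
\end{itemize}
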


\begin{figure}[h!]
    \centering
    \[
    \includegraphics[scale=0.7]{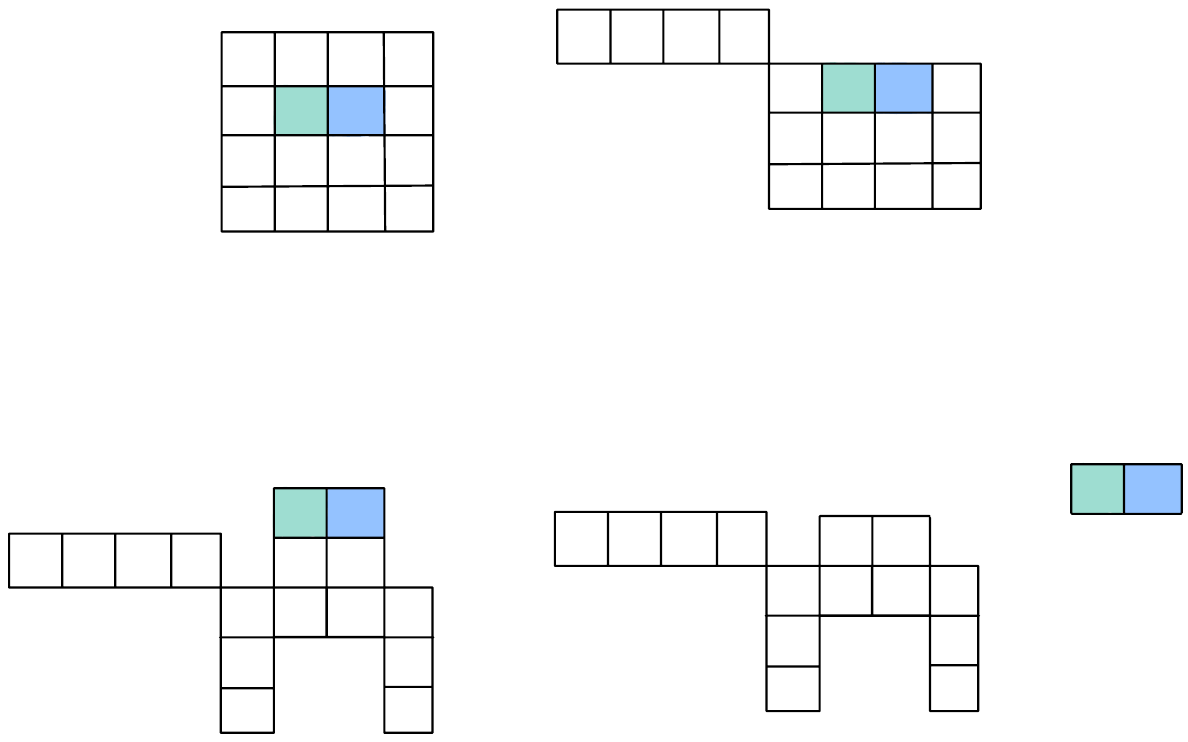}
    \quad \raisebox{1cm}{\(\xmapsto{\hspace{0.2cm}m\hspace{0.2cm}}\)} \quad\includegraphics[scale=0.46]{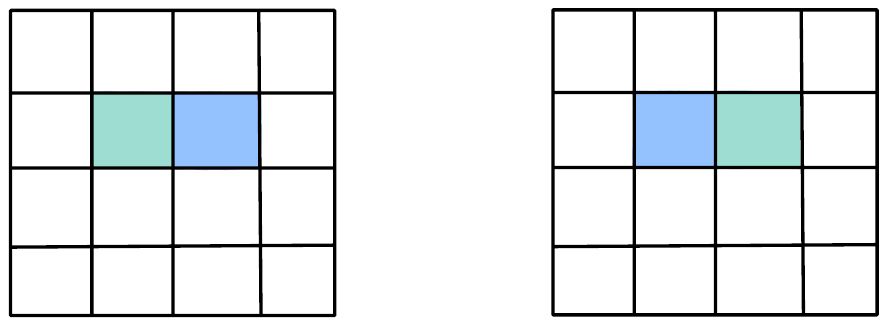}
    \]
    \caption{\Cref{lem: mp.neural.ode.swap}: the measure preserving map $m$ swaps the colored squares and leaves the whites invariant.}
    \label{fig:placeholder}
\end{figure}

\begin{proof}
It suffices to construct a divergence-free flow that swaps two adjacent
hypercubes; arbitrary pairs can then be swapped via a finite composition of
adjacent transpositions along a grid path. Since $h,\delta$ are fixed, we write
$\square_i\coloneqq\square_{ih\delta}$. Without loss of generality we treat the case
$j=i+he_1$.

We will build
\[
m=\upphi^{-1}\circ\uppsi\circ\upphi,
\]
where $\upphi$ ``isolates'' the two cubes and $\uppsi$ swaps them. Both $\upphi$ and
$\uppsi$ will be time-$T$ maps of \eqref{eq: neural.ode} with $\nabla_x\cdot v(\cdot,\theta(t))=0$
at all times; hence $m$ enjoys the same property.

\medskip
\noindent\textbf{Step 1: Isolating $\square_i$ and $\square_{i+he_1}$.}

\begin{claim}\label{cl: isolation}
There exists a measure-preserving and invertible map
$\upphi:\R^d\to\R^d$, which is a time-$T$ flow map of \eqref{eq: neural.ode} corresponding to a piecewise constant $\theta$ with finitely many switches satisfying
$\nabla_x\cdot v(\cdot,\theta(t))=0$ for all $t$, such that
\begin{align*}
\upphi(x)=x &\quad\text{ for  }x\in\square_i\cup\square_{i+he_1},\\
e_2\cdot\upphi(x)\le i_2-\delta &\quad\text{ for  }x\in\square_k,\ k\neq i,i+he_1.
\end{align*}
\end{claim}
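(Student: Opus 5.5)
We build $\upphi$ as a finite composition of the elementary shears of \Cref{lem: several.d}. Each shear is the flow of a field of the form $\pm e_l(x\cdot n+\beta)_+$ with $n=\pm e_k$ and $l\neq k$. Such a field is divergence free, and the corresponding flow is a measure-preserving bijection of $\R^d$. Since the required properties (fixing $\square_i\cup\square_{i+he_1}$ pointwise, measure preservation, zero divergence at all times, finitely many switches) are stable under composition, it suffices to chain finitely many such moves. The guiding picture: all cubes other than the two distinguished ones are pushed downward in the $e_2$ direction, below the level $i_2-\delta$, while the two distinguished cubes never lie in an active region.

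\textbf{Step 1: clear the cubes above.} First we push every cube $\square_k$ with $k_2>i_2$ sideways, out of the vertical column
\[
I\coloneqq[i_1,i_1+2h-\delta]\times\R^{d-1}
\]
containing $\square_i\cup\square_{i+he_1}$. We do this with shears in direction $e_1$ activated by $n=e_2$ and threshold $b$ placed in the gap $(i_2+h-\delta,\,i_2+h)$. By \Cref{lem: several.d}, these translate every point with $x_2\ge i_2+h$ by $\uptau e_1$ and fix every point with $x_2\le i_2+h-\delta$. The middle strip contains no cube, so each cube is moved rigidly or not at all. We take $\uptau\ge 4L$ so that all upper cubes leave $[-L,L]^d$ entirely.

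\textbf{Step 2: move the cubes down.} A point of a cube with $k_2\ge i_2$ now either lies outside the column $I$ or is one of the distinguished cubes. We first deal with cubes in the same row $x_2\in[i_2,i_2+h-\delta]$ but lying to the left or right of $I$. They are handled by a shear in direction $-e_2$ activated by $n=-e_1$ (for cubes to the left) or $n=e_1$ (for cubes to the right), with threshold in the $\delta$-gaps bounding $I$. This translates them by $-\uptau' e_2$ with $\uptau'\ge 4L$, far below $i_2-\delta$, and fixes $I$. The cubes moved in Step 1 are translated down by the same kind of shear, activated by $x_1\ge i_1+2h$ (respectively $x_1\le i_1-\delta$), and end up below level $i_2-\delta$ as well. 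Finally, every cube with $k_2<i_2$ already satisfies $x_2\le k_2+h-\delta\le i_2-\delta$ and is not touched by any activated region, provided we order the moves so that the downward translations are applied only to cubes outside $I$.

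\textbf{Main obstacle: bookkeeping the gaps.} The delicate point is that each shear only acts rigidly on points with $x\cdot n+b\ge h'$ and is the identity on $x\cdot n+b\le 0$. The transition strip, of width $h'$, must therefore avoid every cube at the moment the shear is applied. We will take $h'<\delta$ and always place thresholds in the $\delta$-corridors between grid cells. After each translation the moved cubes no longer sit on the grid, so we must also verify that the corridors used later still contain no cube. To guarantee this, we choose translation amounts that are integer multiples of $h$, which keeps the moved cubes aligned with translated copies of the grid, and we make them large. With these choices, each cube lies entirely on one side of every transition strip, and the estimate $e_2\cdot\upphi(x)\le i_2-\delta$ follows by tracking the $e_2$-coordinate through the finite sequence of moves.
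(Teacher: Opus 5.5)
\textbf{Genuine gap for $d\ge 3$.} Your column $I=[i_1,i_1+2h-\delta]\times\R^{d-1}$ is confined only in the $x_1$-direction, and every shear in your Step~2 selects points by $x_1$ alone (left or right of $I$).

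Take a cube $\square_k$ with $k_1\in\{i_1,i_1+h\}$, $k_2=i_2$ and $(k_3,\dots,k_d)\neq(i_3,\dots,i_d)$, for instance $k=i+he_3$. Step~1 does not move it, since $x_2\le i_2+h-\delta$ on it. It lies inside $I$, so no Step~2 shear acts on it either. Hence $e_2\cdot\upphi(x)\ge i_2>i_2-\delta$ on $\square_k$, which contradicts the claim. Your sentence ``a point of a cube with $k_2\ge i_2$ now either lies outside the column $I$ or is one of the distinguished cubes'' holds only for $d=2$.

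The missing ingredient is what the paper's maps $\phi_{m,1},\phi_{m,2}$, $m=3,\dots,d$, provide. For each such $m$ you need two further shears, translating by $-\uptau' e_2$ on $\{x_m\ge i_m+h\}$ and on $\{x_m\le i_m-\delta\}$ respectively, with thresholds in the $\delta$-corridors. They are the identity on $\square_i\cup\square_{i+he_1}$, because there $x_m\in[i_m,i_m+h-\delta]$. Equivalently, replace $I$ by the tube $[i_1,i_1+2h-\delta]\times\R\times\prod_{m\ge 3}[i_m,i_m+h-\delta]$ and clear its complement one coordinate at a time.

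With this addition your construction is correct, and on one point it is more robust than the paper's. There, all translations have size $2h$. In general this neither brings cubes with $k_2\ge i_2+2h$ below $i_2-\delta$, nor removes every upper cube from the column. For example, $\phi_2$ sends $\square_{i+2he_1+he_2}$ directly on top of $\square_i$, where no subsequent map acts. Your translations of size $\ge 4L$ avoid both problems.

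A minor inaccuracy: cubes with $k_2<i_2$ lying outside $I$ are moved by your Step~2 shears rather than left untouched. Since they only move downward, this is harmless.
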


\begin{figure}[h!]
    \centering
    \[
\begin{tikzcd}[column sep=large, row sep=large]
\includegraphics[scale=0.55]{Figures/perm-1.pdf}
  \quad\arrow[r, mapsto, "1"]\quad
&
\includegraphics[scale=0.55]{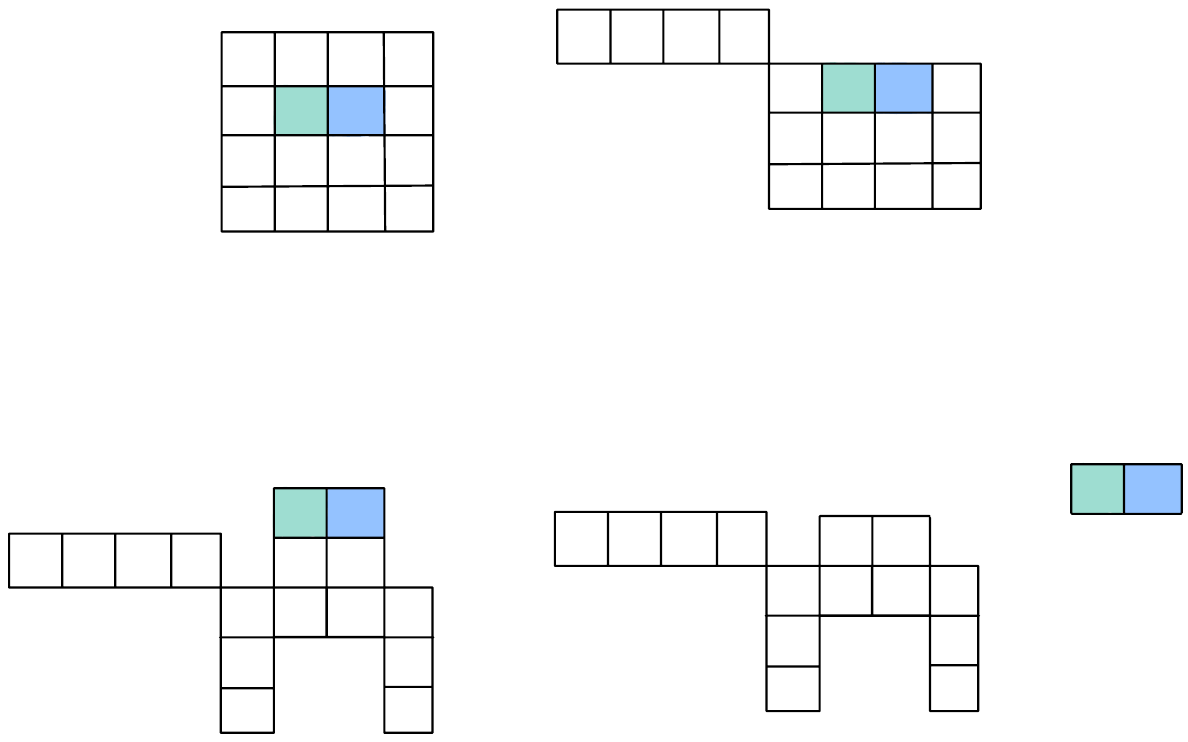}
  \arrow[d, mapsto, "2"]
\\
\includegraphics[scale=0.55]{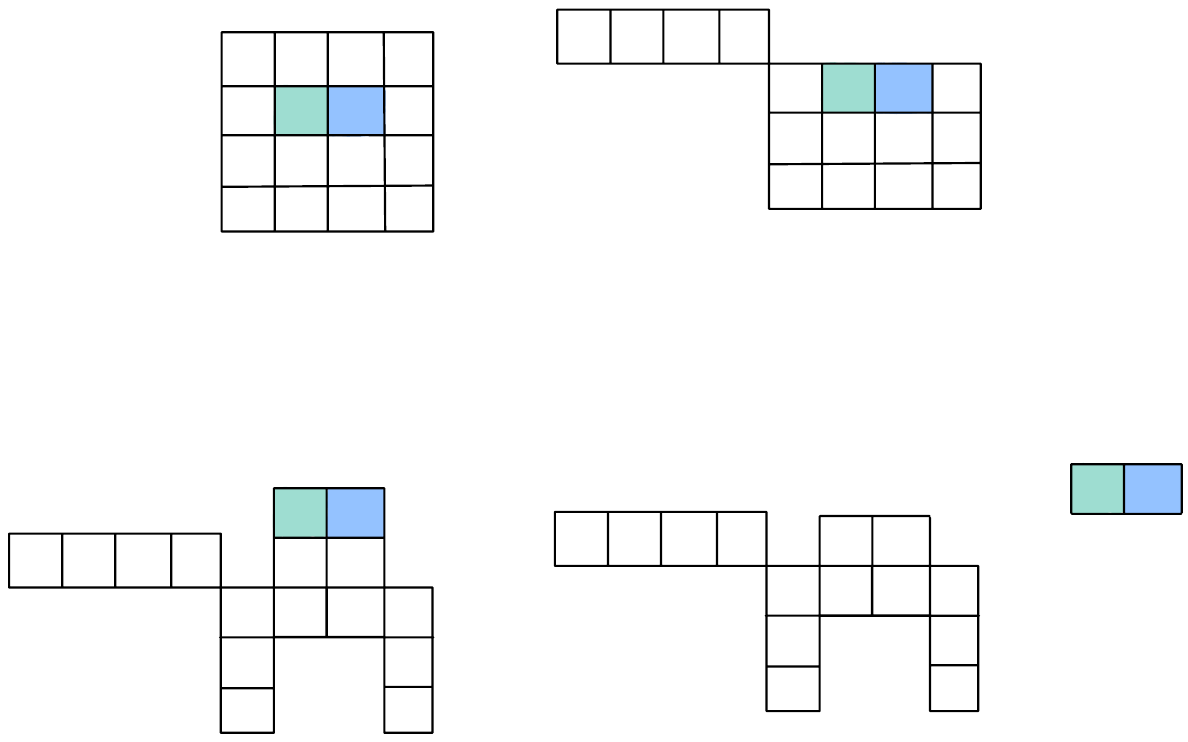}
&
\includegraphics[scale=0.55]{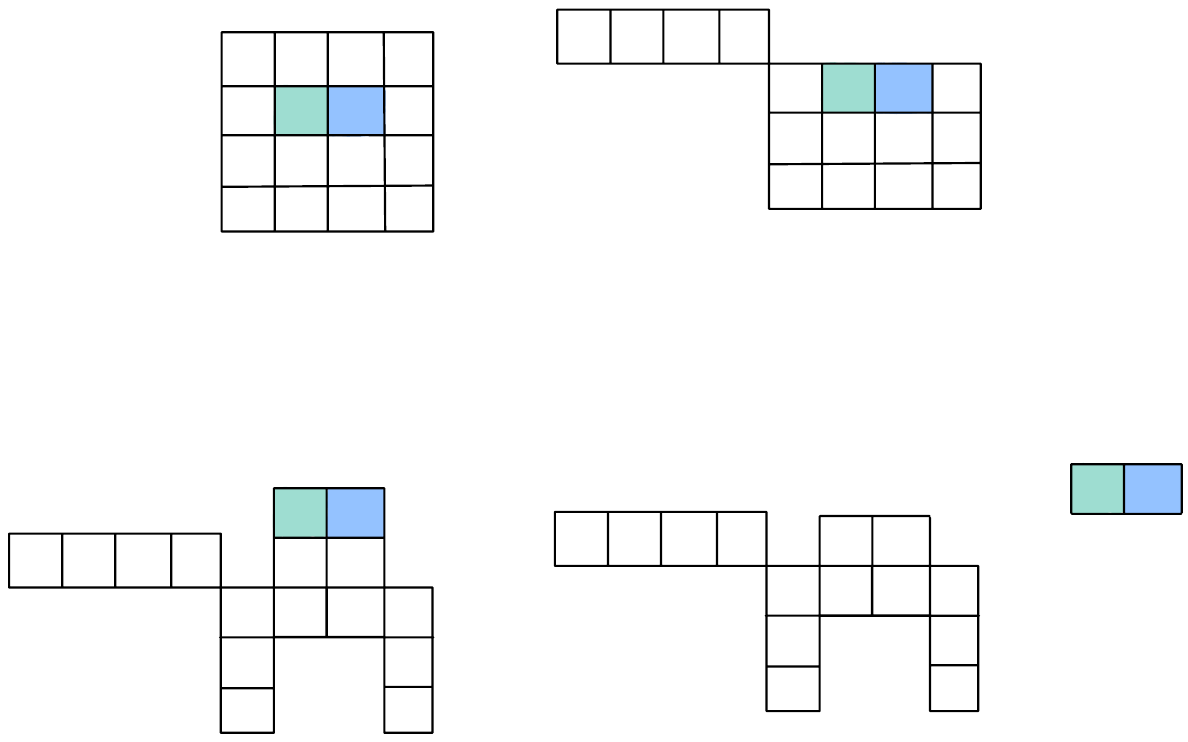}
  \quad \arrow[l, mapsto, "3"]\quad 
\end{tikzcd}
\]
    \caption{An illustration of \Cref{cl: isolation}.}
    \label{fig:placeholder}
\end{figure}

\begin{proof}[Proof of \Cref{cl: isolation}]
We employ Lemma~\ref{lem: several.d}: for a
unit vector $e_\ell$, thresholds $a<b$ with $b-a\ge\delta$, and a vector
$\uptau e_r$ with $r\neq \ell$, there exists a divergence-free flow whose time--$1$
map equals the translation $x\mapsto x+\uptau e_r$ on $\{x_\ell\ge b\}$, equals the
identity on $\{x_\ell\le a\}$, and is measure-preserving (and smooth) in the strip
$a<x_\ell<b$.

Define $\upphi$ as the composition
\[
\upphi=(\phi_{d,2}\circ\phi_{d,1})\circ(\phi_{d-1,2}\circ\phi_{d-1,1})\circ\cdots
\circ(\phi_{3,2}\circ\phi_{3,1})\circ(\phi_{1,2}\circ\phi_{1,1})\circ\phi_2,
\]
where:

\smallskip
\emph{(a) First, separate the layers above $i_2$.} Set
\[
\phi_2(x)=
\begin{cases}
x-2h\,e_1, & e_2\cdot x\ge i_2+h,\\
x, & e_2\cdot x\le i_2+h-\delta.
\end{cases}
\]
Then $\phi_2(\square_k)=\square_{k-2he_1}$ if $k_2>i_2$, and $\phi_2$ is the
identity on $\{e_2\cdot x\le i_2+h-\delta\}$, in particular on
$\square_i\cup\square_{i+he_1}$.

\smallskip
\emph{(b) Next, push down by $2h$ every cube that differs from $i$ in any
coordinate $m\neq 2$, while keeping $\square_i$ and $\square_{i+he_1}$ fixed.}
For each $m\in\{3,\dots,d\}$ set
\begin{align*}
\phi_{m,1}(x)&=
\begin{cases}
x-2h\,e_2, & e_m\cdot x\ge i_m+h,\\
x, & e_m\cdot x\le i_m+h-\delta,
\end{cases}\\
\phi_{m,2}(x)&=
\begin{cases}
x-2h\,e_2, & e_m\cdot x\le i_m-\delta,\\
x, & e_m\cdot x\ge i_m.
\end{cases}
\end{align*}
Since points in $\square_i\cup\square_{i+he_1}$ satisfy
$e_m\cdot x\in[i_m,i_m+h-\delta]$, neither $\phi_{m,1}$ nor
$\phi_{m,2}$ acts on them.

For $m=1$ we use asymmetric thresholds so as not to affect $\square_{i+he_1}$:
\begin{align*}
\phi_{1,1}(x)&=
\begin{cases}
x-2h\,e_2, & e_1\cdot x\ge i_1+2h,\\
x, & e_1\cdot x \le i_1+2h-\delta,
\end{cases} \\
\phi_{1,2}(x)&=
\begin{cases}
x-2h\,e_2, & e_1\cdot x\le i_1-\delta,\\
x, & e_1\cdot x \ge i_1.
\end{cases}
\end{align*}
Again, $\phi_{1,1}$ and $\phi_{1,2}$ are the identity on
$\square_i\cup\square_{i+he_1}$ because their $x_1$-coordinates lie in
$[i_1,i_1+2h-\delta]$.

\smallskip
The maps $\phi_{m,1},\phi_{m,2}$ for different $m\neq 2$ commute (they translate
in $e_2$ with selection depending only on $x_m$), although they need not commute
with $\phi_2$ (which selects by $x_2$). By inspection of the cases:

\begin{itemize}
\item If $k_2=i_2$ and $k\neq i,i+he_1$, then there exists $m\neq 2$ with
$k_m\neq i_m$. Hence either $\phi_{m,1}$ (if $k_m>i_m$) or $\phi_{m,2}$
(if $k_m<i_m$) moves $\square_k$ to $\square_{k-2he_2}$, which lies entirely in
$\{x_2\le i_2-h-\delta\}\subset\{x_2\le i_2-\delta\}$.

\item If $k_2>i_2$, then $\phi_2$ first sends $\square_k$ to the left,
$\square_{k-2he_1}$. Afterwards, $\phi_{1,2}$ applies (since all points have
$x_1\le i_1-\delta$), shifting the cube to $\square_{k-2he_1-2he_2}$, which
again lies in $\{x_2\le i_2-\delta\}$.
\end{itemize}
Thus $\upphi$ fixes $\square_i\cup\square_{i+he_1}$ and maps every other
$\square_k$ into the half-space $\{x_2\le i_2-\delta\}$, as claimed. Each block
map above is realized by Lemma~\ref{lem: several.d}, hence is a time--$1$ map of
a divergence-free flow; their composition $\upphi$ has the same properties.
\end{proof}

\medskip
\noindent\textbf{Step 2: Swapping $\upphi(\square_i)$ and $\upphi(\square_{i+he_1})$.}

\begin{claim}\label{cl: swap}
Under the conclusions of \Cref{cl: isolation}, there exists a measure-preserving and invertible map
$\upphi:\R^d\to\R^d$, which is the time-$T$ flow map of \eqref{eq: neural.ode} corresponding to a piecewise constant $\theta$ with finitely many switches satisfying
$\nabla_x\cdot v(\cdot,\theta(t))=0$ for all $t$, such that such that
\[
\uppsi(\square_i)=\square_{i+he_1},\qquad
\uppsi(\square_{i+he_1})=\square_i,\qquad
\uppsi|_{\{x_2\le i_2-\delta\}}=\mathsf{id}.
\]
Consequently,
\[
(\uppsi\circ\upphi)(\square_i)=\square_{i+he_1},\quad
(\uppsi\circ\upphi)(\square_{i+he_1})=\square_i,\quad
(\uppsi\circ\upphi)(\square_k)=\upphi(\square_k)
\]
for $k\neq i,i+he_1$.
\end{claim}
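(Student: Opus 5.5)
The plan is to build $\uppsi$ as a finite composition of the elementary shears of \Cref{lem: several.d}. Each shear translates a half-space $\{x_\ell\ge b\}$ by $\uptau e_r$ with $r\neq\ell$, fixes $\{x_\ell\le b-\delta\}$, and is the time-$1$ map of a divergence-free flow with one switch. Measure preservation, invertibility and the piecewise-constant structure with finitely many switches are therefore inherited by concatenation, exactly as in the proof of \Cref{cl: isolation}. Everything then reduces to a bookkeeping problem: each shear must either translate or fix each cube, so no cube is ever cut by a transition strip, and the net effect on $\{x_2\le i_2-\delta\}$ must be the identity. The conclusion about $\uppsi\circ\upphi$ will then follow immediately from \Cref{cl: isolation}, since every $\upphi(\square_k)$ with $k\neq i,i+he_1$ lies in $\{x_2\le i_2-\delta\}$.

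Shears selected by $x_2$ (translating in $e_1$) with thresholds above $i_2-\delta$ are harmless, because they are the identity on $\{x_2\le i_2-\delta\}$. The difficulty is that they cannot distinguish the two cubes, which sit at the same height. To separate the cubes, I would first conjugate by a lift $A$: by \Cref{lem: several.d} with $\ell=1$, $r=2$, $A$ translates $\{x_1\ge i_1+h\}$ by $3he_2$ and is the identity on $\{x_1\le i_1+h-\delta\}$. After $A$, the right cube occupies heights $[i_2+3h,i_2+4h-\delta]$. Everything else lies in $\{x_2\le i_2+3h-\delta\}$: the lifted lower pieces top out at $i_2+3h-\delta$, and the left cube at $i_2+h-\delta$. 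The cubes are now at different heights, so combinations of two $x_2$-selected shears can act on horizontal slabs (for example, a shift on $\{x_2\ge i_2\}$ followed by the opposite shift on $\{x_2\ge i_2+3h\}$) and translate each cube horizontally independently, while never touching the lower region.

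Concretely, the intended chain is as follows. First apply $A$. Then shear $\{x_2\ge i_2+3h\}$ by $-he_1$, which moves the lifted right cube into the left column. Next, use a slab shift on $[i_2,i_2+3h)$ by $+he_1$ to move the left cube into the right column. Finally undo the lift with $A^{-1}$, arranged so that it acts on the correct column. Both cubes end up back at height $i_2$ with their columns exchanged, and every piece in $\{x_2\le i_2-\delta\}$ has experienced $A$ followed by $A^{-1}$ with nothing in between.

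I expect this last point to be the main obstacle. The intermediate horizontal shears must never act on $A(\{x_2\le i_2-\delta\})$. At the same time, the final vertical correction must bring the moved cubes down without displacing the lower pieces of either column. If the naive inverse $A^{-1}$ does not accomplish both, I would first try enlarging the lift (say $5h$) and inserting a symmetric lift-and-lower for the left column. This uses the fact that $x_1$-selected vertical shears commute with each other, so column lifts can be undone independently. I would verify cube by cube, as in the case analysis of \Cref{cl: isolation}, that the lower region returns to itself pointwise.
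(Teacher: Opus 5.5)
Your reduction to shears from \Cref{lem: several.d} is fine, and so is your deduction of the $\uppsi\circ\upphi$ statement from \Cref{cl: isolation}. The chain you propose, however, does not produce the map, and the failure is structural rather than confined to the last step. There are two separate problems.

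\textbf{The lifted lower region is hit.} The slab shift on $[i_2,i_2+3h)$ is not the identity on $A(\{x_2\le i_2-\delta\})$. As you note yourself, in the right column this set reaches height $i_2+3h-\delta$. The shift therefore moves by $he_1$ every right-column point that started at a height in $[i_2-3h,i_2-\delta]$, and it shears the transition strip. This cannot be avoided. Compositions of $x_2$-selected shears act as $x\mapsto x+F(x_2)$, so any of them that moves the left cube also moves the points of $A(\{x_2\le i_2-\delta\})$ at heights in $[i_2,i_2+h-\delta]$. A larger lift only enlarges this overlap.

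\textbf{The wrong column is lowered.} You use one lift of the right column and only height-preserving moves in between. So when the right cube reaches the left column it is still at height $i_2+3h$, and there $A^{-1}$ does nothing. Lowering the left column instead is no longer $A^{-1}$. It leaves the left-column part of $\{x_2\le i_2-\delta\}$ lowered by $3h$, while the right-column part stays lifted.

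Hence the assertion that every point of $\{x_2\le i_2-\delta\}$ undergoes $A$ and then $A^{-1}$ with nothing in between is false. The fallback you sketch (a $5h$ lift, an extra symmetric lift of the left column) is not yet a construction. You also cannot simply copy the paper's four-step $\uppsi$: its $\varphi_2$ and $\varphi_4$ translate in $e_2$ with a selection in $x_2$, which \Cref{lem: several.d} does not provide, and $\varphi_4$ as written is not even injective.

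The missing idea is to conjugate each vertical displacement separately. Lift the column a cube is leaving (or entering), move the cube horizontally with a shear selected strictly above the lifted region, then lower that column again. A parking column ensures that a cube is never lowered inside the column it currently occupies.

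For instance, take $\Lambda\ge h$ and build $\uppsi$ in three factors.
\begin{enumerate}
\item Let $G_R$ lift $\{i_1+h\le x_1\le i_1+2h-\delta\}$ by $\Lambda e_2$. This takes two $x_1$-selected shears, with transitions in the gaps $(i_1+h-\delta,i_1+h)$ and $(i_1+2h-\delta,i_1+2h)$. Let $S_1$ translate $\{x_2\ge i_2+\Lambda\}$ by $2he_1$ and be the identity on $\{x_2\le i_2+\Lambda-\delta\}$; this set contains both $G_R(\{x_2\le i_2-\delta\})$ and $\square_i$. Then $G_R^{-1}S_1G_R$ fixes $\{x_2\le i_2-\delta\}$ pointwise, fixes $\square_i$, and sends $\square_{i+he_1}$ to $\square_i+3he_1+\Lambda e_2$.
\item Translate $\{x_2\ge i_2\}$ by $he_1$. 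This sends $\square_i$ to $\square_{i+he_1}$ and the parked cube to $\square_i+4he_1+\Lambda e_2$.
\item Apply $G_L^{-1}S_3G_L$, where $G_L$ lifts $\{i_1\le x_1\le i_1+h-\delta\}$ by $\Lambda e_2$ and $S_3$ translates $\{x_2\ge i_2+\Lambda\}$ by $-4he_1$. This brings the parked cube to $\square_i$ and leaves $\square_{i+he_1}$ untouched.
\end{enumerate}
Each of the three factors is the identity on $\{x_2\le i_2-\delta\}$, so their composition is the required $\uppsi$.
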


\begin{figure}[h!]
    \centering
    \[
\begin{tikzcd}[column sep=large, row sep=large]
\includegraphics[scale=0.55]{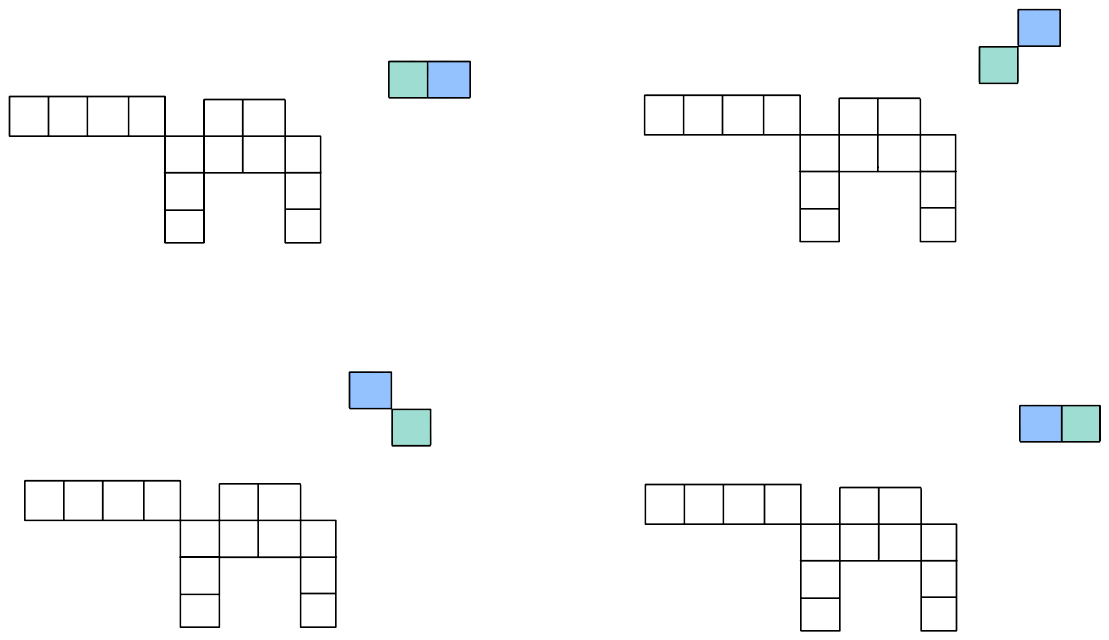}
  \quad\arrow[r, mapsto, "1"]\quad
&
\includegraphics[scale=0.55]{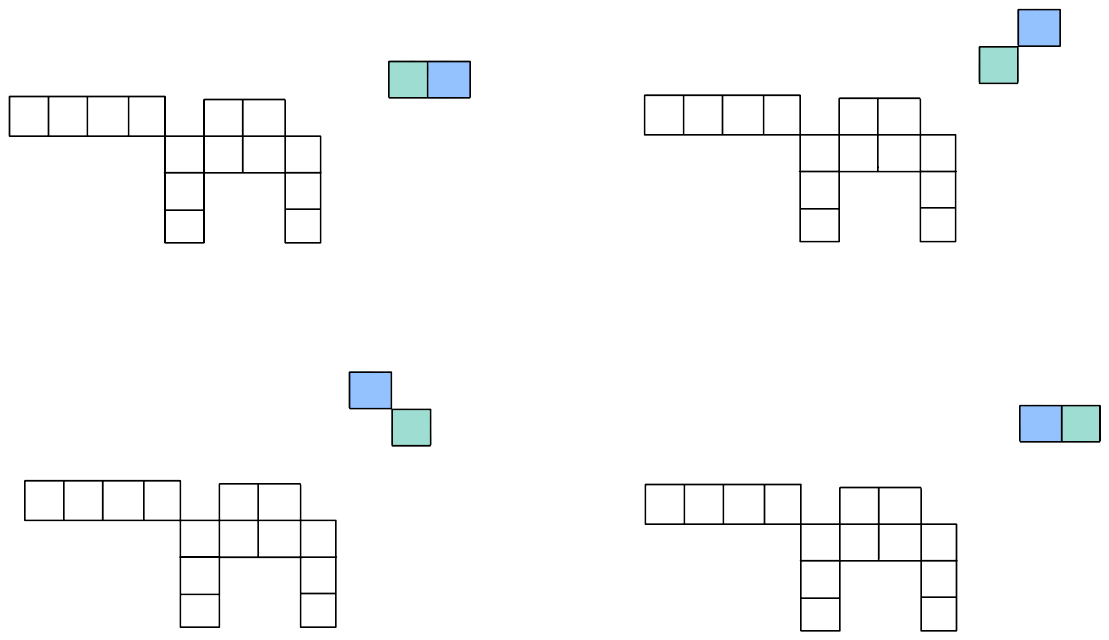}
  \arrow[d, mapsto, "2"]
\\
\includegraphics[scale=0.55]{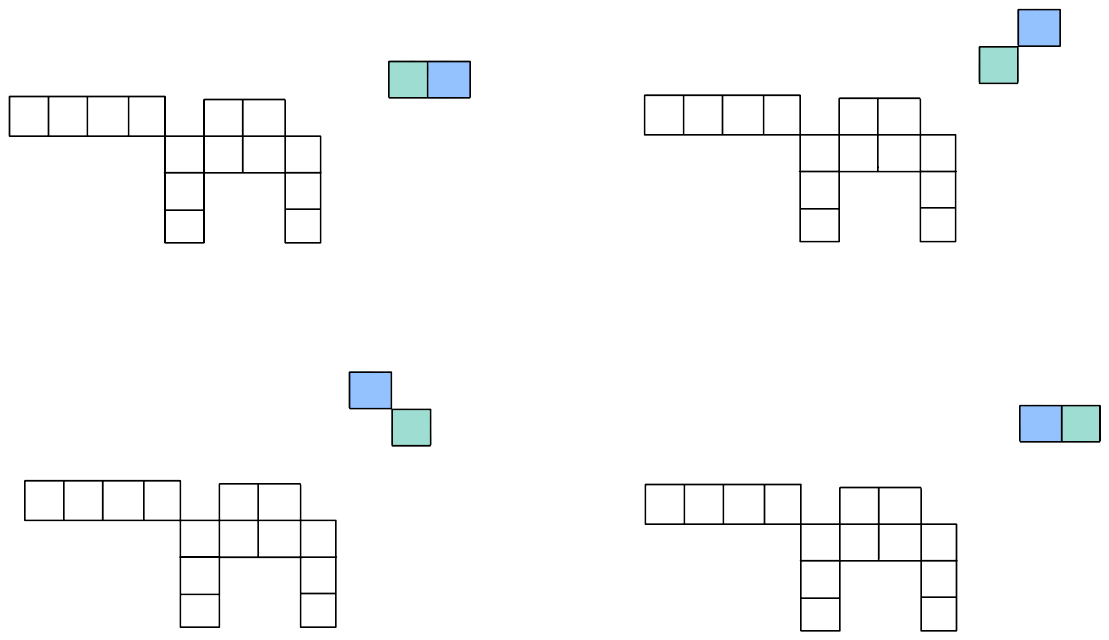}
&
\includegraphics[scale=0.55]{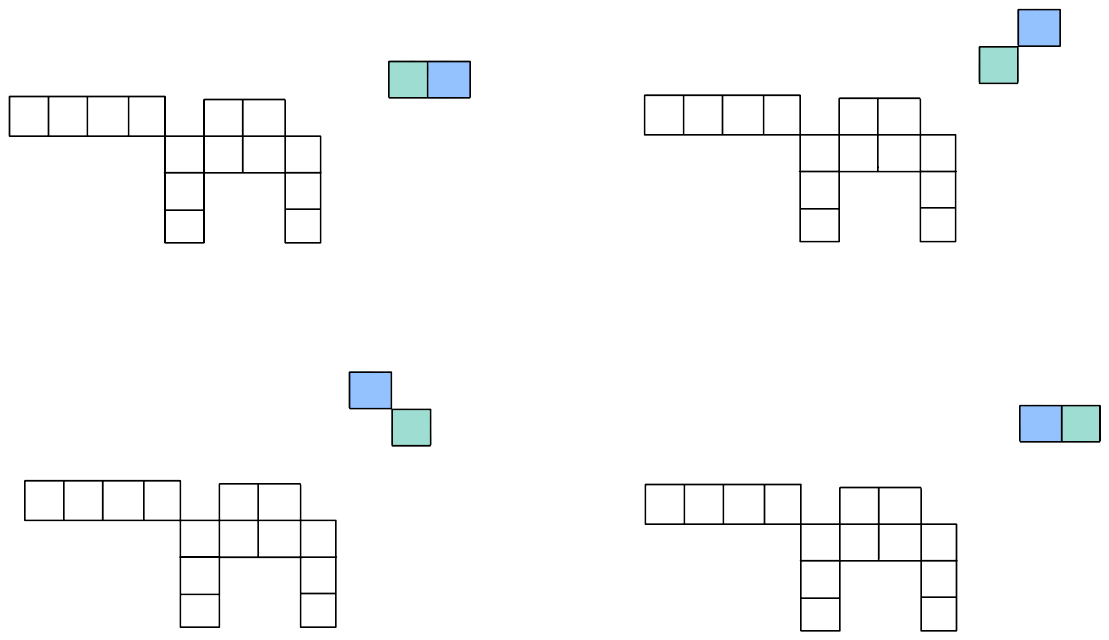}
  \quad \arrow[l, mapsto, "3"]\quad 
\end{tikzcd}
\]
    \caption{An illustration of \Cref{cl: swap}.}
    \label{fig:placeholder}
\end{figure}

\begin{proof}[Proof of \Cref{cl: swap}]
Set $\uptau\coloneqq h$. Using again Lemma~\ref{lem: several.d}, define
\[
\uppsi\coloneqq\varphi_4\circ\varphi_3\circ\varphi_2\circ\varphi_1,
\]
where each $\varphi_\ell$ is the time--$1$ map of a divergence-free vector field
and is the identity on $\{x_2\le i_2-\delta\}$. Concretely:
\begin{align}
\varphi_1(x)&=
\begin{cases}
x+\uptau e_1, & x_2\ge i_2,\\
x, & x_2\le i_2-\delta,
\end{cases}
\label{eq:psi1}\\[0.2em]
\varphi_2(x)&=
\begin{cases}
x+he_2, & x_2\ge i_2\ \text{ and }\ x_1\ge i_1+2h,\\
x, & x_2\le i_2-\delta\ \text{ or }\ x_1\le i_1+2h-\delta,
\end{cases}
\label{eq:psi2}\\[0.2em]
\varphi_3(x)&=
\begin{cases}
x-2h e_1, & x_2\ge i_2+h,\\
x, & x_2\le i_2+h-\delta,
\end{cases}
\label{eq:psi3}\\[0.2em]
\varphi_4(x)&=
\begin{cases}
x-he_2, & x_2\ge i_2+h,\\
x, & x_2\le i_2+h-\delta .
\end{cases}
\label{eq:psi4}
\end{align}
Let us track the two relevant cubes in the $(x_1,x_2)$-plane (all other
coordinates are unchanged):
\begin{align*}
A&\coloneqq\square_i=[i_1,i_1+h-\delta]\times[i_2,i_2+h-\delta],\\
B&\coloneqq\square_{i+he_1}=[i_1+h,i_1+2h-\delta]\times[i_2,i_2+h-\delta].
\end{align*}
Applying \eqref{eq:psi1}--\eqref{eq:psi4} in order:
\begin{itemize}
\item After $\varphi_1$: both $A$ and $B$ shift right by $h$:
\[
A_1=[i_1+h,i_1+2h-\delta]\times[i_2,i_2+h-\delta],\ 
B_1=[i_1+2h,i_1+3h-\delta]\times[i_2,i_2+h-\delta].
\]
\item After $\varphi_2$: only the far-right strip ($x_1\ge i_1+2h$) is lifted:
\[
A_2=A_1,\qquad
B_2=[i_1+2h,i_1+3h-\delta]\times[i_2+h,i_2+2h-\delta].
\]
\item After $\varphi_3$: only the lifted block ($x_2\ge i_2+h$) slides left by $2h$:
\[
A_3=A_2,\qquad
B_3=[i_1,i_1+h-\delta]\times[i_2+h,i_2+2h-\delta].
\]
\item After $\varphi_4$: lower the elevated block by $h$:
\begin{align*}
A_4&=A_3=[i_1+h,i_1+2h-\delta]\times[i_2,i_2+h-\delta]=\square_{i+he_1},\\
B_4&=[i_1,i_1+h-\delta]\times[i_2,i_2+h-\delta]=\square_i.
\end{align*}
\end{itemize}
Thus $\uppsi(\square_i)=\square_{i+he_1}$ and $\uppsi(\square_{i+he_1})=\square_i$.
Moreover each $\varphi_\ell$ is the identity on $\{x_2\le i_2-\delta\}$, hence
$\uppsi=\Id$ there. Since $\upphi(\square_k)\subset\{x_2\le i_2-\delta\}$ for all
$k\neq i,i+he_1$, we get $(\uppsi\circ\upphi)(x)=\upphi(x)$ for $x\in\square_k$,
$k\neq i,i+he_1$. Finally, every $\varphi_\ell$ is a time--$1$ map of a
divergence-free flow, so $\uppsi$ is measure-preserving and
orientation-preserving. This proves the claim.
\end{proof}

\medskip
\noindent\textbf{Step 3: Conclusion.}
Set
\[
m\coloneqq\upphi^{-1}\circ\uppsi\circ\upphi .
\]
Then
\[
m(\square_i)=\square_{i+he_1},\qquad
m(\square_{i+he_1})=\square_i,\qquad
m(\square_k)=\square_k
\]
for all $k\neq i,i+he_1$.
To swap arbitrary $\square_{i}$ and $\square_{j}$, compose finitely many such
adjacent swaps along a grid path from $i$ to $j$. Because $\upphi$ and $\uppsi$ (and
hence $m$) arise by concatenating time intervals on which
$\nabla_x\cdot v(\cdot,\theta(t))=0$, the Jacobian determinant along the flow solves
\[
\begin{cases}
\frac{\diff}{\diff t}\det \nabla X_t(x)=(\nabla_x\cdot  v(X_t(x),\theta(t)))\det \nabla X_t(x)=0\\
\det \nabla X_0\equiv 1
\end{cases}
\]
so $\det \nabla X_t\equiv 1>0$ for all $t$, and $m$ is measure-preserving and
orientation-preserving. This completes the proof.
\end{proof}

\subsubsection*{Compressible part}

\begin{lemma} \label{lem: exact.linear.nonuniform}
    Suppose $\Omega'\subset\R^{d-1}$ is a bounded domain, and consider the function $\upxi:[0, 1]\times \Omega'\to\R^d$ defined as
    \begin{equation*}
        \upxi(x) \coloneqq \upzeta(x_1)e_1+\sum_{j=2}^d x_je_j,
    \end{equation*}
    where $\upzeta:[0,1]\to\R$ is a continuous, piecewise affine, increasing function:
    \begin{equation*} \label{eq: exact.form.nonuniform}
        \upzeta(x_1)=\sum_{i=0}^n (\alpha_i x_1 + \beta_i)\mathbf{1}_{[y_i,y_{i+1})}(x_1),
    \end{equation*}
    where $0 = y_0 < y_1 < \dots < y_{n+1} = 1$. Then there exists a flow $\upxi_\theta^1:\R^d\to\R^d$ of \eqref{eq: neural.ode}, stemming from a piecewise constant $\theta$ with $n$ switches, such that
    \[
        \upxi \equiv \upxi_\theta^1 \quad \text{on } [0,1]\times\Omega'.
    \]
\end{lemma}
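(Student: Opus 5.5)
The plan is to realize $\upxi$ as a composition of one-dimensional ReLU flows acting only on the first coordinate, using \Cref{lem: 1d}. We take $a=e_1$ or $a=-e_1$ and $w\in\R e_1$. Then the velocity $w(a\cdot x+b)_+$ only moves $x_1$, and only through $x_1$, so the coordinates $x_2,\dots,x_d$ are untouched. The problem therefore reduces to $d=1$: we must produce $\upzeta$ on $[0,1]$ as a time-$1$ flow of $\dot x=w(\pm x+b)_+$ with piecewise constant $(w,b)$. By \Cref{lem: 1d}(1), the flow with $a=1$, constant weight $w$ on a time interval of length $t$, and bias $-b$ is the map equal to the identity for $x\le b$ and to $x\mapsto e^{wt}(x-b)+b$ for $x\ge b$. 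This is a continuous, increasing, piecewise affine map with a single breakpoint at $b$, which multiplies the slope to the right of $b$ by $e^{wt}$.

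The key steps, in order, are as follows.

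\textbf{Step 1 (affine normalization on the first piece).} On $[0,y_1)$ we have $\upzeta(x_1)=\alpha_0x_1+\beta_0$ with $\alpha_0>0$; we assume strictly increasing, so that all $\alpha_i>0$ and the slope ratios can be written as exponentials. To produce the scaling $\alpha_0$, apply a ReLU flow with breakpoint placed to the left of $0$, say at $b=-1$, for a time with $e^{wt}=\alpha_0$. Then compose with a translation $x_1\mapsto x_1+c$ on $[0,1]$. The translation is obtained, for instance, via \Cref{lem: 1d}(2) with $c$ to the left of the image interval, or by a ReLU flow with $w=\pm1$, $a=0$, $b=1$: the constant field $\pm e_1$ yields an exact translation. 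After this step the map agrees with the first affine piece, extended to all of $[0,1]$.

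\textbf{Step 2 (inserting slope changes).} Proceed left to right. At the image point $z_i=\upzeta(y_i)$ of each breakpoint, apply a flow with $a=1$, bias $-z_i$, and $e^{wt}=\alpha_i/\alpha_{i-1}$. This leaves everything to the left of $z_i$ fixed and rescales the slope to the right by exactly the needed ratio around the fixed point $z_i$. Because the map built so far is increasing and continuous, points with $x_1<y_i$ have images $<z_i$ and are untouched, while points with $x_1\ge y_i$ get the correct slope. Continuity of $\upzeta$ ensures the offsets $\beta_i$ come out right automatically. Induction on $i$ then shows that after the $i$-th stage the map equals $\upzeta$ on $[0,y_{i+1})$ and equals the affine continuation of the $i$-th piece beyond that point.

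\textbf{Step 3 (time bookkeeping).} Each stage uses a constant $\theta$ on a subinterval of $[0,1]$. Its length is arbitrary, since only the product $wt$ matters, so we split $[0,1]$ into consecutive intervals and set $w=\log(\alpha_i/\alpha_{i-1})/t_i$. Counting the stages gives $n$ breakpoint stages plus the initial normalization. The main obstacle is matching exactly $n$ switches. I would try to absorb the normalization into the breakpoint stages: the scaling by $\alpha_0$ and the translation by $\beta_0$ can be merged into a single ReLU stage of the form $e^{wt}(x-b)+b$, whose fixed point $b$ is placed outside $[0,1]$ at $b=\beta_0/(1-\alpha_0)$ when $\alpha_0\neq1$. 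Its active region must contain $[0,1]$, which requires choosing the sign of $a$ accordingly; the case $\alpha_0=1$ is a pure translation. If this merging fails, I would accept the count $n+1$ or $n+2$ up to the convention for switches, since the statement's constant is immaterial for later use.
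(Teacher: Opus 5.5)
Your argument is essentially the paper's. You reduce to a scalar flow by taking $a,w\in\R e_1$, insert the slope changes left to right with a ReLU stage at threshold $c_i=\Phi_{i-1}(y_i)=\upzeta(y_i)$ and $e^{wt}=\alpha_i/\alpha_{i-1}$, and conclude by induction. You are also right to watch the sign convention of \Cref{lem: 1d}(1) and to require $\alpha_i>0$.

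On the switch count you left open, you cannot close it, and the paper does not reach $n$ either. When $\beta_0\neq0$ it appends the translation of \Cref{lem: 1d}(2) and ends with $n+2$ switches. In fact $n$ is unattainable in general.

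Your merged first stage works exactly when $\alpha_0=1$ or $\beta_0/(1-\alpha_0)\notin(0,1)$. Otherwise no frozen neuron reproduces $x_1\mapsto\alpha_0x_1+\beta_0$ on $[0,1]$. Indeed, its time-$t$ map is $x\mapsto x+\kappa\,w\,(a\cdot x+b)_+$ with $\kappa>0$, so its displacement lies in the ray $\R_{\ge0}w$. By contrast, $(\alpha_0-1)x_1+\beta_0$ changes sign on $[0,1]$. Concretely, for $n=0$ and $\upzeta(x_1)=2x_1-\tfrac12$, no zero-switch realization exists, so the lemma's count is off.

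Your fallback is scaling, then translation by the constant field ($a=0$, $b=1$), then the $n$ slope changes. This gives $n+1$ switches in general. That is the correct general count, and one better than the paper's, because your translation costs a single stage instead of two. Only finiteness of the switch count is used later, in the proof of \Cref{thm: gen.thm}.
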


\begin{proof}
It is readily seen, by choosing $a(t)\in\R^d$ and $w(t)\in\R^d$ to only have non-zero values in the first coordinate,
that it suffices to construct a scalar flow $\upzeta_\theta^t:\R\to\R$ of \eqref{eq: neural.ode} such that
$\upzeta_\theta^1\equiv \upzeta$ on $[0,1]$.

Write $\upzeta$ as in \eqref{eq: exact.form.nonuniform} with breakpoints
$0=y_0<y_1<\dots<y_{n+1}=1$ and slopes $\alpha_i>0$.
Continuity of $\upzeta$ at $y_i$ is equivalent to
\[
\alpha_{i-1}y_i+\beta_{i-1}=\alpha_i y_i+\beta_i,\qquad i=1,\dots,n,
\]
hence
\[
\beta_i=\beta_{i-1}+(\alpha_{i-1}-\alpha_i)y_i.
\]

\medskip
\noindent\textbf{Step 1: Elementary maps.}
Set $h_i\coloneqq y_{i+1}-y_i$.
For each $i=0,\dots,n$, we will build a map $S_i:\R\to\R$ which is the time-$h_i$ flow map of a scalar
\eqref{eq: neural.ode} and satisfies:
\begin{align*}
S_i(x)=x &\text{ for }x\le \Phi_{i-1}(y_i),\\
S_i(x)=\Phi_{i-1}(y_i) + \frac{\alpha_i}{\alpha_{i-1}}\left(x-\Phi_{i-1}(y_i)\right) &\text{ for }x\ge \Phi_{i-1}(y_i),
\end{align*}
where by convention $\alpha_{-1}=1$ and $\Phi_{-1}=\Id$.
Define the partial compositions
\(\Phi_i \coloneqq S_i\circ S_{i-1}\circ\cdots\circ S_0\)
for \( i=0,\dots,n.\)
Fix $i\ge 0$ and set the threshold $c_i\coloneqq \Phi_{i-1}(y_i)$.
Consider the scalar ODE on the time interval of length $h_i$:
\begin{equation}\label{eq:slope-change-ODE}
\dot x(t)=\gamma_i\left(x(t)-c_i\right)_+,
\qquad
\gamma_i\coloneqq \frac{1}{h_i}\log\left(\frac{\alpha_i}{\alpha_{i-1}}\right),
\end{equation}
with initial condition $x(0)=x_0$.
By the explicit one-dimensional formula in Lemma~\ref{lem: 1d}(1) (with $a\equiv 1$ and bias $b=c_i$),
the time-$h_i$ map of \eqref{eq:slope-change-ODE} is exactly
\[
S_i(x_0)=
\begin{cases}
x_0, & x_0\le c_i,\\[0.2em]
c_i + e^{\gamma_i h_i}(x_0-c_i)
= c_i + \dfrac{\alpha_i}{\alpha_{i-1}}(x_0-c_i), & x_0\ge c_i,
\end{cases}
\]
which is the desired slope-change at the threshold $c_i$.

\medskip
\noindent\textbf{Step 2: Induction.}
Assume first $\beta_0=0$.
We prove by induction that for each $i=0,\dots,n$,
\begin{equation}\label{eq:Phi-induction}
\Phi_i(x)=
\alpha_i x + \sum_{j=1}^{i}(\alpha_{j-1}-\alpha_j)y_j\,\mathbf 1_{(y_j,1]}(x)
\qquad\text{for all }x\in[0,1].
\end{equation}
For $i=0$, we have $c_0=\Phi_{-1}(y_0)=y_0=0$, hence
$S_0(x)=e^{\gamma_0 h_0}x=\alpha_0 x$ for $x\ge 0$, so $\Phi_0(x)=\alpha_0 x$ and \eqref{eq:Phi-induction} holds.

Assume \eqref{eq:Phi-induction} holds for $i-1$.
Then $\Phi_{i-1}$ is continuous and strictly increasing on $[0,1]$, and $c_i=\Phi_{i-1}(y_i)$.
If $x\le y_i$ then $\Phi_{i-1}(x)\le \Phi_{i-1}(y_i)=c_i$, so $S_i(\Phi_{i-1}(x))=\Phi_{i-1}(x)$ and hence
$\Phi_i(x)=\Phi_{i-1}(x)$.

If $x>y_i$, then $\Phi_{i-1}(x)\ge c_i$ and thus
\[
\Phi_i(x)
= S_i(\Phi_{i-1}(x))
= c_i + \frac{\alpha_i}{\alpha_{i-1}}\left(\Phi_{i-1}(x)-c_i\right).
\]
Using the induction hypothesis at $x$ and at $y_i$ gives
\[
\Phi_{i-1}(x)-\Phi_{i-1}(y_i)=\alpha_{i-1}(x-y_i),
\]
because the jump terms $\sum_{j=1}^{i-1}(\alpha_{j-1}-\alpha_j)y_j\,\mathbf 1_{(y_j,1]}(\cdot)$ agree for $x$ and $y_i$.
Therefore
\[
\Phi_i(x)=\Phi_{i-1}(y_i)+\frac{\alpha_i}{\alpha_{i-1}}\cdot \alpha_{i-1}(x-y_i)
=\Phi_{i-1}(y_i)+\alpha_i(x-y_i).
\]
Finally, by \eqref{eq:Phi-induction} at $y_i$,
\[
\Phi_{i-1}(y_i)=\alpha_{i-1}y_i+\sum_{j=1}^{i-1}(\alpha_{j-1}-\alpha_j)y_j,
\]
hence
\[
\Phi_i(x)=\alpha_i x + \sum_{j=1}^{i}(\alpha_{j-1}-\alpha_j)y_j,
\qquad x>y_i,
\]
which is exactly \eqref{eq:Phi-induction} for $i$.
Thus \eqref{eq:Phi-induction} holds for all $i$, and in particular $\Phi_n=\upzeta$ on $[0,1]$ when $\beta_0=0$.

\medskip
\noindent\textbf{Step 3: Adding the initial translation $\beta_0\neq 0$.}
If $\beta_0\neq 0$, prepend a translation map $T_{\beta_0}(x)=x+\beta_0$, realized with two switches by
Lemma~\ref{lem: 1d}(2). Apply the above construction to $\upzeta-\beta_0$ (which has the same slopes and breakpoints and satisfies
$(\upzeta-\beta_0)(0)=0$). Composing $T_{\beta_0}$ with the $n$ slope-change stages yields a piecewise-constant control
with $n+2$ switches whose time-$1$ flow equals $\upzeta$ on $[0,1]$. Finally, extending componentwise gives the desired map.
\end{proof}

\subsection{Stability estimates}\label{subsec: stab.estimates}

The goal of this section is to two elementary stability estimates.

\begin{lemma} \label{lem: contraction}
    Let $\Omega\subseteq\R^d$,  $\mu_1,\mu_2\in \mathscr{P}(\R^d)$, and let $\phi:\R^d\to\mathbb{R}^d$ be measurable. 
    Then
    \begin{equation*}
        \left|\phi_\#\mu_1-\phi_\#\mu_2\right|_{\mathsf{TV}(\Omega)}\leq \left|\mu_1-\mu_2\right|_{\mathsf{TV}(\phi^{-1}(\Omega))}.
    \end{equation*}
\end{lemma}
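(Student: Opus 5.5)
The plan is to unfold the definition of the restricted total variation and reduce everything to the definition of pushforward. I will read $|\nu|_{\mathsf{TV}(\Omega)}$ as the total variation of the restriction $\nu\llcorner\Omega$, that is,
\[
|\nu|_{\mathsf{TV}(\Omega)}=\sup\left\{\int_\Omega f\diff\nu\colon f \text{ Borel},\ \|f\|_{L^\infty(\Omega)}\le 1\right\}.
\]
Fix such an $f$ on $\Omega$ and extend it by zero outside $\Omega$. Then $f\mathbf 1_\Omega$ is bounded by $1$ and measurable.

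The key step is the change of variables for pushforwards, which holds for any measurable $\phi$ and bounded measurable test function:
\[
\int_\Omega f\diff(\phi_\#\mu_1-\phi_\#\mu_2)=\int_{\R^d}(f\mathbf 1_\Omega)\circ\phi\diff(\mu_1-\mu_2)=\int_{\phi^{-1}(\Omega)} f\circ\phi\diff(\mu_1-\mu_2).
\]
This uses the identity $\mathbf 1_\Omega\circ\phi=\mathbf 1_{\phi^{-1}(\Omega)}$. The function $g\coloneqq f\circ\phi$ is Borel on $\phi^{-1}(\Omega)$, which is itself a Borel set since $\phi$ is measurable, and $|g|\le 1$ there. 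Hence the right-hand side is at most $|\mu_1-\mu_2|_{\mathsf{TV}(\phi^{-1}(\Omega))}$. Taking the supremum over $f$ gives the claim.

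The only point needing care is measurability. If $\phi$ is merely Lebesgue measurable, composing with a Borel $f$ remains fine, but $\phi^{-1}(\Omega)$ must lie in the $\sigma$-algebra on which $\mu_1,\mu_2$ are defined. I would either assume Borel measurability, or pass to the completions of $\mu_1,\mu_2$, which leaves all the integrals unchanged.

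Equivalently, the argument can be run with the set formulation $|\nu|_{\mathsf{TV}}=2\sup_A|\nu(A)|$ for signed measures of zero total mass. The restrictions here need not have zero mass, however, so I will use the dual formulation with $f$ throughout. Nothing here is a genuine obstacle; the lemma is a direct consequence of the definitions.
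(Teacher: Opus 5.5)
Your proof is correct and is essentially the paper's argument. The paper pulls back sets $\mathscr{A}\subset\Omega$ to $\phi^{-1}(\mathscr{A})\subset\phi^{-1}(\Omega)$ and enlarges the supremum, while you do the same with test functions $f\mapsto f\circ\phi$. Your functional version has the small advantage of matching the paper's own $L^\infty$-duality definition of $\mathsf{TV}$ exactly: as you note, for restrictions of nonzero mass the set supremum $\sup_{\mathscr{A}\subset\Omega}|\nu(\mathscr{A})|$ is only comparable to $|\nu|(\Omega)$ rather than equal to it, although the pullback argument works for either functional.
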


\begin{proof}
We have
   \begin{align*}
       \left|\phi_\#\mu_1-\phi_\#\mu_2\right|_{\mathsf{TV}(\Omega)}&=\sup_{\mathscr{A}\subset \Omega} \left|\mu_1(\phi^{-1}(\mathscr{A}))-\mu_2(\phi^{-1}(\mathscr{A}))\right|\\
       &\leq\sup_{\mathscr{B}\subset \phi^{-1}(\Omega)} \left|\mu_1(\mathscr{B})-\mu_2(\mathscr{B})\right|\\
       &=\left|\mu_1-\mu_2\right|_{\mathsf{TV}(\phi^{-1}(\Omega))}.
   \end{align*}
   where the inequality stems from the fact that $\phi$ may not be surjective. Should $\phi$ be surjective, then the inequality is an equality.
\end{proof}

In the context in which $\phi$ is the flow of $\dot{x}(t) = v(t,x(t))$ with $x(0)=x_0$ say, \Cref{lem: contraction} is simply restating the fact that the semigroup for the continuity equation $\partial_t \rho(t,x)+\nabla\cdot (v(t,x)\rho(t,x))=0$ is a contraction on $L^1(\Omega)$ for any $\Omega\subseteq\R^d$.

We have the following estimate for pushforwards of a measure via measure-preserving maps.

\begin{lemma} \label{thm: stability.measure.preserving}
Suppose $\rho\in L^\infty(\R^d)$ a probability density of the form
\[
\rho=\sum_{k=1}^K \rho_k\mathbf 1_{E_k},
\]
where $K\in\N$, $E_k\subset\R^d$ are pairwise disjoint, bounded Lipschitz domains, and
$\rho_k\in C^{0,1}(E_k)$. Denote $\omega\coloneqq \mathrm{supp}\,\rho=\bigcup_{k} E_k$.
Let $m_1:\R^d\to\R^d$ be a measure--preserving bijection and let
$m_2:\R^d\to\R^d$ be a measure--preserving bi--Lipschitz homeomorphism. 
Then for every $p\in\mathbb{N}$ there exists a constant $C=C(d,p,\rho,\omega,m_2)>0$ such that
\begin{equation*} \label{eq:BV-piecewise-stability}
|m_{1\#}\mu-m_{2\#}\mu|_{\mathsf{TV}(\R^d)}
\le C\|m_1-m_2\|_{L^p(\omega)}^{\frac{p}{p+1}}.
\end{equation*}
\end{lemma}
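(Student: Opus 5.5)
Since $m_2$ is a measure-preserving bi-Lipschitz homeomorphism, total variation is invariant under $m_2^{-1}$. With $\eta\coloneqq m_2^{-1}\circ m_1$ (a measure-preserving bijection) we therefore have
\[
|m_{1\#}\mu-m_{2\#}\mu|_{\mathsf{TV}}=|\eta_\#\mu-\mu|_{\mathsf{TV}}.
\]
Since $m_2^{-1}$ is $L$-Lipschitz, $|\eta(x)-x|=|m_2^{-1}(m_1(x))-m_2^{-1}(m_2(x))|\le L|m_1(x)-m_2(x)|$, so it suffices to prove
\[
|\eta_\#\mu-\mu|_{\mathsf{TV}}\le C\|\eta-\mathsf{id}\|_{L^p(\omega)}^{p/(p+1)}
\]
for measure-preserving bijections $\eta$. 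We use duality: $|\eta_\#\mu-\mu|_{\mathsf{TV}}=\sup_{\|f\|_\infty\le1}\int (f\circ\eta-f)\rho\,\diff x$. Because $\eta$ preserves Lebesgue measure, $\int (f\circ\eta)\rho=\int f\,(\rho\circ\eta^{-1})$, so the TV distance is $\|\rho\circ\eta^{-1}-\rho\|_{L^1}$. This equals $\|\rho-\rho\circ\eta\|_{L^1}$ after a measure-preserving change of variables, and we then need to bound it by displacement.

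The key step is a regularization argument, since $\rho$ is only piecewise Lipschitz and jumps across the boundaries $\partial E_k$. Fix $r>0$ and split $\omega$ into two parts. The first is the \emph{interior} set of points $x\in E_k$ with $\dist(x,\partial E_k)>r$ and $|\eta(x)-x|\le r$. There $\eta(x)\in E_k$, hence $|\rho(\eta(x))-\rho(x)|\le \mathrm{Lip}(\rho_k)|\eta(x)-x|$, and integrating gives a contribution $\lesssim \|\eta-\mathsf{id}\|_{L^1(\omega)}\lesssim \|\eta-\mathsf{id}\|_{L^p(\omega)}$. The second is the remaining set, where we use only $|\rho\circ\eta-\rho|\le 2\|\rho\|_\infty$. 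It consists of two pieces. The boundary layer $\{\dist(x,\partial E_k)\le r\}$ has measure $\le C_\omega r$ because the $E_k$ are bounded Lipschitz domains. The large-displacement set $\{|\eta-\mathsf{id}|>r\}$ has measure $\le r^{-p}\|\eta-\mathsf{id}\|^p_{L^p}$ by Chebyshev. We also need the part of $\R^d\setminus\omega$ that $\eta^{-1}$ maps into $\omega$ (mass entering from outside); by measure preservation this is controlled by the same two sets, so it costs no more.

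Collecting terms gives, with $\delta\coloneqq\|\eta-\mathsf{id}\|_{L^p(\omega)}$,
\[
|\eta_\#\mu-\mu|_{\mathsf{TV}}\le C\left(\delta+r+r^{-p}\delta^p\right).
\]
Optimizing with $r=\delta^{p/(p+1)}$ yields $C\delta^{p/(p+1)}$. When $\delta\ge1$ the bound holds trivially since TV is at most $2$. Finally we reinsert the factor $L$ from $m_2^{-1}$, so the constant depends on $d,p,\rho,\omega,m_2$ as stated.

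The main obstacle is the bookkeeping in the second step, namely controlling mass that $\eta$ moves across $\partial\omega$ or between different $E_k$. The approach is to observe that any such point either lies within distance $r$ of $\bigcup_k\partial E_k$ or has displacement larger than $r$. This makes the Lipschitz-boundary (Minkowski content) estimate and Chebyshev sufficient, and measure preservation of $\eta$ transfers the count for the incoming mass.
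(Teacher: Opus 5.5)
Your proposal is correct and follows the paper's proof essentially step for step. It makes the same reduction to $\eta=m_2^{-1}\circ m_1$ with $|\eta_\#\mu-\mu|_{\mathsf{TV}}=\int|\rho\circ\eta-\rho|$, uses the same three-way split into an interior small-displacement set, a boundary layer around $\bigcup_k\partial E_k$ and a Chebyshev large-displacement set, controls the leakage across $\partial\omega$ by measure preservation, and optimizes $r\sim\delta^{p/(p+1)}$. The only cosmetic differences are that you bound the interior term by $\|\eta-\mathsf{id}\|_{L^1(\omega)}$ instead of $\ell_*|\omega|\,r$, and that you explicitly dispose of the regime $\delta\ge 1$, which the paper leaves implicit.
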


\begin{proof}
Define $\eta\coloneqq m_2^{-1}\circ m_1$. Since both $m_1$ and $m_2$ are
measure-preserving bijections, so is $\eta$.
Moreover, since total variation is invariant under pushforward by a bijection,
\[
|m_{1\#}\mu-m_{2\#}\mu|_{\mathsf{TV}}
=
| (m_2^{-1})_\#(m_{1\#}\mu-m_{2\#}\mu)|_{\mathsf{TV}}
=
|\eta_\#\mu-\mu|_{\mathsf{TV}}.
\]
Because $\eta$ preserves Lebesgue measure (i.e.\ $|\det\nabla\eta|=1$ a.e.) and is bijective,
the pushforward $\eta_\#\mu$ has density $\rho\circ\eta^{-1}$, hence
\[
|\eta_\#\mu-\mu|_{\mathsf{TV}}
=
\int_{\R^d}|\rho(x)-\rho(\eta^{-1}(x))|\diff x.
\]
Changing variables $x=\eta(y)$ and using measure preservation gives
\begin{equation}\label{eq:TV-eta}
|\eta_\#\mu-\mu|_{\mathsf{TV}}
=
\int_{\R^d}|\rho(\eta(y))-\rho(y)|\diff y.
\end{equation}
We now estimate the right-hand side.
Let $\Sigma\coloneqq \bigcup_{k=1}^K \partial E_k$ and set
\[
\ell_*\coloneqq \max_{1\le k\le K}\|\rho_k\|_{C^{0,1}(E_k)}.
\]
Fix $\delta>0$ and define the sets
\begin{align*}
X_\delta&\coloneqq \bigcup_{k=1}^K\Big\{y\in E_k:\dist(y,\partial E_k)>\delta,\ |\eta(y)-y|\le\delta\Big\},\\
Y_\delta&\coloneqq \{y\in\R^d:\dist(y,\Sigma)\le\delta\},\\
Z_\delta&\coloneqq \{y\in\omega:|\eta(y)-y|>\delta\}.
\end{align*}

\medskip
\noindent\textbf{Step 1: Lipschitz control on $X_\delta$.}
If $y\in X_\delta\cap E_k$, then $|\eta(y)-y|\le\delta$ and $\dist(y,\partial E_k)>\delta$,
so necessarily $\eta(y)\in E_k$ as well. Therefore
\[
|\rho(\eta(y))-\rho(y)|=|\rho_k(\eta(y))-\rho_k(y)|
\le \|\rho_k\|_{C^{0,1}}\,|\eta(y)-y|
\le \ell_*\,\delta.
\]
Integrating yields
\begin{equation}\label{eq:Xdelta}
\int_{X_\delta}|\rho(\eta)-\rho|
\le \ell_*|\omega|\,\delta.
\end{equation}

\medskip
\noindent\textbf{Step 2: Control on the complement of $X_\delta$ inside $\omega$.}
On $\omega$ we have the decomposition $\omega = X_\delta \cup (\omega\cap Y_\delta)\cup Z_\delta$,
because if $y\in\omega$ is not in $X_\delta$, then either $\dist(y,\Sigma)\le\delta$ (hence $y\in Y_\delta$)
or $|\eta(y)-y|>\delta$ (hence $y\in Z_\delta$).
Using the crude bound $|\rho(\eta)-\rho|\le 2\|\rho\|_{L^\infty}$, we obtain
\begin{equation}\label{eq:omega-complement}
\int_{\omega\setminus X_\delta}|\rho(\eta)-\rho|
\le 2\|\rho\|_{L^\infty}\left(|\omega\cap Y_\delta|+|Z_\delta|\right).
\end{equation}

\medskip
\noindent\textbf{Step 3: Outside $\omega$.}
The integral \eqref{eq:TV-eta} also includes $y\in\omega^c$, where $\rho(y)=0$, so
\[
\int_{\omega^c}|\rho(\eta(y))-\rho(y)|\diff y
=
\int_{\omega^c}\rho(\eta(y))\diff y
\le \|\rho\|_{L^\infty}\,|\omega^c\cap \eta^{-1}(\omega)|.
\]
Observe that if $y\in\omega$ and $\dist(y,\Sigma)>\delta$ and $|\eta(y)-y|\le\delta$, then $\eta(y)\in\omega$.
Hence $\{y\in\omega:\eta(y)\notin\omega\}\subset (\omega\cap Y_\delta)\cup Z_\delta$.
Since $\eta$ is measure-preserving, the entering and leaving sets have the same measure:
\[
|\omega^c\cap\eta^{-1}(\omega)|=|\omega\cap\eta^{-1}(\omega^c)|
\le |\omega\cap Y_\delta|+|Z_\delta|.
\]
Therefore,
\begin{equation}\label{eq:leakage}
\int_{\omega^c}\rho(\eta(y))\diff y
\le \|\rho\|_{L^\infty}\left(|\omega\cap Y_\delta|+|Z_\delta|\right).
\end{equation}

\medskip
\noindent\textbf{Step 4: Estimate of $Y_\delta$ and $Z_\delta$.}
Combining \eqref{eq:Xdelta}, \eqref{eq:omega-complement}, and \eqref{eq:leakage} with
\eqref{eq:TV-eta} yields
\[
|\eta_\#\mu-\mu|_{\mathsf{TV}}
\le
\ell_*|\omega|\,\delta
+ C\|\rho\|_{L^\infty}\,|Y_\delta|
+ C\|\rho\|_{L^\infty}\,|Z_\delta|.
\]
Since $\Sigma$ is a finite union of Lipschitz hypersurfaces, a tubular-neighborhood estimate gives
\[
|Y_\delta|\le C_d\,\mathrm{per}(\omega)\,\delta
\qquad\text{for all }\delta\in(0,\delta_0),
\]
for some $\delta_0>0$ and $C_d$ depending only on $d$ and the Lipschitz character.

Also, by Markov's inequality on $\omega$,
\[
|Z_\delta|
\le \delta^{-p}\|\eta-\Id\|_{L^p(\omega)}^p.
\]
Since $m_2^{-1}$ is Lipschitz,
\[
\|\eta-\Id\|_{L^p(\omega)}
=
\|m_2^{-1}\circ m_1-m_2^{-1}\circ m_2\|_{L^p(\omega)}
\le \|m_2^{-1}\|_{C^{0,1}}\|m_1-m_2\|_{L^p(\omega)}.
\]
Hence we obtain
\[
|\eta_\#\mu-\mu|_{\mathsf{TV}}
\le A\,\delta
+ B\,\delta^{-p}\|m_1-m_2\|_{L^p(\omega)}^p,
\]
for constants $A,B$ depending on $(d,p,\rho,\omega,m_2)$.

Optimizing in $\delta$ (take $\delta=(pB/A)^{1/(p+1)}\|m_1-m_2\|_{L^p(\omega)}^{p/(p+1)}$)
yields
\[
|\eta_\#\mu-\mu|_{\mathsf{TV}}
\le C\|m_1-m_2\|_{L^p(\omega)}^{\frac{p}{p+1}},
\]
which, together with $|m_{1\#}\mu-m_{2\#}\mu|_{\mathsf{TV}}=|\eta_\#\mu-\mu|_{\mathsf{TV}}$,
concludes the proof.
\end{proof}

\subsection{Proof of \Cref{thm: gen.thm}}
\label{sec: proof.1}

\begin{proof}[Proof of \Cref{thm: gen.thm}]
We split the proof in two parts.

\subsubsection*{Part 1. The \(L^p\)–estimate.}
Let \(\phi_\varepsilon\) be the Lagrange interpolant from \Cref{thm: lagrange.approx}, which by \Cref{prop: lagrange.decomposition}, admits a factorization
\[
\phi_\varepsilon = m_{2\varepsilon}\circ g_\varepsilon\circ m_{1\varepsilon},
\]
where \(m_{1\varepsilon},m_{2\varepsilon}\) are measure–preserving bijections and \(g_\varepsilon\) is a $C^1$ diffeomorphism. 
We construct the controlled flow \(\phi_\theta\) by approximating the three factors with flows of \eqref{eq: neural.ode}:
\[
\phi_\theta = m_{2\theta}\circ g_\theta\circ m_{1\theta}.
\]
Set \(f_\varepsilon\coloneqq g_\varepsilon\circ m_{1\varepsilon}\) and \(f_\theta\coloneqq g_\theta\circ m_{1\theta}\). Then
\begin{align}\label{eq:first-Lp}
\|\phi_\varepsilon-\phi_\theta\|_{L^p(\Omega)}
&\le\|m_{2\varepsilon}\circ f_\varepsilon-m_{2\theta}\circ f_\varepsilon\|_{L^p(\Omega)}
   + \|m_{2\theta}\circ f_\varepsilon-m_{2\theta}\circ f_\theta\|_{L^p(\Omega)}.
\end{align}
For the first term, since $m_{1\varepsilon}$ is measure-preserving,
\begin{align*}
\int_\Omega |m_{2\varepsilon}(f_\varepsilon(x))-m_{2\theta}(f_\varepsilon(x))|^p\diff x
&= \int_{m_{1\varepsilon}(\Omega)} |m_{2\varepsilon}(g_\varepsilon(y))-m_{2\theta}(g_\varepsilon(y))|^p\diff y\\
&\hspace{-1cm}= \int_{g_\varepsilon(m_{1\varepsilon}(\Omega))} |m_{2\varepsilon}(z)-m_{2\theta}(z)|^p|\det\nabla g_\varepsilon^{-1}(z)|\diff z,
\end{align*}
and therefore
\begin{equation*}\label{eq:area-1p}
\|m_{2\varepsilon}\circ f_\varepsilon - m_{2\theta}\circ f_\varepsilon\|_{L^p(\Omega)}
\le \|\det\nabla g_\varepsilon^{-1}\|_{L^\infty}^{\frac1p}
   \,\|m_{2\varepsilon}-m_{2\theta}\|_{L^p(g_\varepsilon(m_{1\varepsilon}(\Omega)))}.
\end{equation*}
For the second term in \eqref{eq:first-Lp}, as \(m_{2\theta}\) is Lipschitz,
\[
\|m_{2\theta}\circ f_\varepsilon-m_{2\theta}\circ f_\theta\|_{L^p(\Omega)}
\le \|m_{2\theta}\|_{C^{0,1}}\|f_\varepsilon-f_\theta\|_{L^p(\Omega)}.
\]
Then
\begin{align*}
\|f_\varepsilon-f_\theta\|_{L^p(\Omega)}
&=\|g_\varepsilon\circ m_{1\varepsilon}-g_\theta\circ m_{1\theta}\|_{L^p(\Omega)}\\
&\le \|g_\varepsilon-g_\theta\|_{L^p(m_{1\varepsilon}(\Omega))}
   +\|g_\theta\|_{C^{0,1}}\,\|m_{1\varepsilon}-m_{1\theta}\|_{L^p(\Omega)}.
\end{align*}
By \Cref{lem: univ.approx.flow}, we can choose \(m_{j\theta}\) to approximate \(m_{j\varepsilon}\) in \(L^p\) as well as desired (on a domain even larger than $\Omega$, to account for what is done in Part 2 as well), and by \Cref{lem: exact.linear.nonuniform}, we can construct \(g_\theta\) to be exactly equal to \(g_\varepsilon\) on $m_{1\varepsilon}(\Omega)$. Combining the last three displays with \eqref{eq:first-Lp} and adding the error \(\|\phi-\phi_\varepsilon\|_{L^p}\) from \Cref{thm: lagrange.approx}, we can arrange
\[
\|\phi-\phi_\theta\|_{L^p(\Omega)}\le \varepsilon
\]
by taking the three approximations sufficiently fine.

\subsubsection*{Part 2. The total variation estimate.}
We split
\begin{equation}\label{eq:TV-split}
|\phi_{\#}\mu-\phi_{\theta\#}\mu|_{\mathsf{TV}(\Omega)}
\le |\phi_{\#}\mu-\phi_{\varepsilon\#}\mu|_{\mathsf{TV}(\Omega)}
   +|\phi_{\varepsilon\#}\mu-\phi_{\theta\#}\mu|_{\mathsf{TV}(\Omega)}.
\end{equation}
We look at one term at a time.

\emph{(II.a) The term \(|\phi_{\#}\mu-\phi_{\varepsilon\#}\mu|\).}
Write \(x=\phi(y)\) and set \(\eta\coloneqq\phi_\varepsilon^{-1}\circ\phi\). By change of variables,
\[
|\phi_{\#}\mu-\phi_{\varepsilon\#}\mu|_{\mathsf{TV}(\Omega)}
= \int_{\phi^{-1}(\Omega)}| \rho(y) - \rho(\eta(y))||\det\nabla\eta(y)| \diff y,
\]
and hence
\[
|\phi_{\#}\mu-\phi_{\varepsilon\#}\mu|_{\mathsf{TV}(\Omega)}
\le \underbrace{\int_{\phi^{-1}(\Omega)} |\rho(\eta)-\rho|\diff y}_{\mathrm{(A)}} +\|\rho\|_{L^\infty}\underbrace{\int_{\phi^{-1}(\Omega)}||\det\nabla\eta|-1|\diff y}_{\mathrm{(B)}}.
\]
Since $\rho$ is Lipschitz, we have
\begin{align*} 
\mathrm{(A)}
=\int_{\phi^{-1}(\Omega)}|\rho(\eta)-\rho|\mathrm{d}y
&\le\|\rho\|_{C^{0,1}}\,\|\eta-\mathsf{id}\|_{L^1(\phi^{-1}(\Omega))}\nonumber\\
&\le\|\rho\|_{C^{0,1}}\,|\phi^{-1}(\Omega)|^{1-\frac1p}\,\|\eta-\mathsf{id}\|_{L^p(\phi^{-1}(\Omega))}.
\end{align*}
Moreover, since $\phi_\varepsilon^{-1}$ is Lipschitz on $\phi^{-1}(\Omega)$,
\begin{align}\label{eq:A-Lip-simple-2}
\|\eta-\mathsf{id}\|_{L^p(\phi^{-1}(\Omega))}
&=\|\phi_\varepsilon^{-1}\circ\phi-\phi_\varepsilon^{-1}\circ\phi_\varepsilon\|_{L^p(\phi^{-1}(\Omega))}\nonumber\\
&\le\|\phi_\varepsilon^{-1}\|_{C^{0,1}(\phi^{-1}(\Omega))}\|\phi-\phi_\varepsilon\|_{L^p(\phi^{-1}(\Omega))}.
\end{align}
For (B), by multilinearity of the determinant,
\[
||\det\nabla\eta|-1|
\le C_d(\|\nabla\eta\|^{d-1}+\|I_d\|^{d-1})|\nabla\eta-I_d|,
\]
and we also have the identity $\nabla\eta(y)-I_d
=(\nabla \phi_\varepsilon^{-1}(\phi(y))-\nabla \phi^{-1}(\phi(y)))\nabla \phi(y)$.
Therefore,
\begin{equation}\label{eq:grad-eta-L1}
\|\nabla\eta-I_d\|_{L^1(\phi^{-1}(\Omega))}
\le\|\nabla\phi\|_{L^\infty(\phi^{-1}(\Omega))}\,
        \|\det\nabla\phi^{-1}\|_{L^\infty(\Omega)}\,
        \|\nabla\phi_\varepsilon^{-1}-\nabla\phi^{-1}\|_{L^1(\Omega)}.
\end{equation}
By \Cref{thm: lagrange.approx}, the right–hand sides of \eqref{eq:A-Lip-simple-2}–\eqref{eq:grad-eta-L1} can be made arbitrarily small; we obtain
\[
|\phi_{\#}\mu-\phi_{\varepsilon\#}\mu|_{\mathsf{TV}(\Omega)}\le \frac{\varepsilon}{2}.
\]
As for the second term,
\begin{align*}
|\phi_{\varepsilon\#}\mu-\phi_{\theta\#}\mu|_{\mathsf{TV}(\Omega)}
&\le |(m_{2\varepsilon} \circ f_\varepsilon)_{\#}\mu-(m_{2\theta} \circ f_\varepsilon)_{\#}\mu|_{\mathsf{TV}(\Omega)} \\
&\quad + |(m_{2\theta} \circ f_\varepsilon)_{\#}\mu-(m_{2\theta} \circ f_\theta)_{\#}\mu|_{\mathsf{TV}(\Omega)} \\
&=: I_1 + I_2.
\end{align*}
By \Cref{lem: contraction} with \(\phi=m_{2\theta}\),
\[
I_2 \le\|f_{\varepsilon\#}\mu - f_{\theta\#}\mu|_{\mathsf{TV}(m_{2\theta}^{-1}(\Omega))}.
\]
By \Cref{lem: exact.linear.nonuniform}, we may choose the compressible factor so that 
\(
g_\theta\equiv g_\varepsilon\) on $m_{1\varepsilon}(\Omega)$,
hence \(f_\theta=g_\varepsilon\circ m_{1\theta}\). Applying \Cref{lem: contraction} again with \(\phi=g_\varepsilon\),
\[
I_2\le|m_{1\varepsilon\#}\mu - m_{1\theta\#}\mu|_{\mathsf{TV}(g_\varepsilon^{-1}(m_{2\theta}^{-1}(\Omega)))}.
\]
Set $\Omega_1\coloneqq g_\varepsilon^{-1}(m_{2\theta}^{-1}(\Omega))\cap \mathrm{supp}\, \rho$. Now \Cref{thm: stability.measure.preserving} applied to \(\rho_{|\Omega_1}\) yields, for some 
\(C>0\),
\begin{equation}\label{eq:I2-bound-final}
I_2\le C\|m_{1\varepsilon}-m_{1\theta}\|_{L^p(\Omega_1)}^{\frac{p}{p+1}}.
\end{equation}
Set \(\nu\coloneqq f_{\varepsilon\#}\mu\). Then
\[
I_1
= |(m_{2\varepsilon})_{\#}\nu-(m_{2\theta})_{\#}\nu|_{\mathsf{TV}(\Omega)}.
\]
Now apply \Cref{thm: stability.measure.preserving} to $\nu$ to obtain
\begin{equation}\label{eq:I1-bound-final}
I_1\le C'\|m_{2\varepsilon}-m_{2\theta}\|_{L^p(g_\varepsilon(m_{1\varepsilon}(\Omega)))}^{\frac{p}{p+1}}.
\end{equation}
Combining (II.a), \eqref{eq:I2-bound-final}–\eqref{eq:I1-bound-final}, and \eqref{eq:TV-split}, we obtain
\[
|\phi_{\#}\mu-\phi_{\theta\#}\mu|_{\mathsf{TV}(\Omega)}
\le\frac{\varepsilon}{2}
+C\,\|m_{1\varepsilon}-m_{1\theta}\|_{L^p}^{\frac{p}{p+1}}
+C'\|m_{2\varepsilon}-m_{2\theta}\|_{L^p}^{\frac{p}{p+1}}.
\]
Choosing $\varepsilon$ so that the two \(L^p\)–errors are sufficiently small proves \Cref{thm: gen.thm}.
\end{proof}

\begin{remark} We briefly explain why the measures to which we apply \Cref{thm: stability.measure.preserving} have densities that are piecewise
Lipschitz on finitely many Lipschitz domains.
By assumption $\rho\in L^\infty \cap C^{0,1}(\R^d)$.
Fix a bounded set $B\subset\R^d$ containing all domains that appear in Part~2 (e.g. a large box containing
$\Omega$, $m_{1\varepsilon}(\Omega)$, $g_\varepsilon(m_{1\varepsilon}(\Omega))$, and their small enlargements).
On $B$, the maps $m_{1\varepsilon}$ and $g_\varepsilon$ are piecewise affine with finitely many pieces:
$m_{1\varepsilon}$ is piecewise affine on the finite triangulation used in \Cref{prop: lagrange.decomposition},
and $g_\varepsilon=\nabla\varphi$ is affine on each interval where $\psi'$ is affine.
Therefore the composition $f_\varepsilon=g_\varepsilon\circ m_{1\varepsilon}$ is piecewise affine on a finite polyhedral partition of $B$
(obtained by taking a common refinement of the triangulation and the interval partition pulled back by $m_{1\varepsilon}$).

Set $\nu=f_{\varepsilon\#}\mu$.
Since $f_\varepsilon$ is a bi-Lipschitz piecewise affine map on each cell of the refined partition, it maps each cell
(a Lipschitz polytope) onto a Lipschitz polytope, and the change-of-variables formula shows that on the image of each cell,
the density of $\nu$ is given by a Lipschitz function composed with an affine map and multiplied by a constant Jacobian factor.
Hence $\nu$ admits a representation
\[
\frac{\diff \nu}{\diff x}=\sum_{j=1}^J \nu_j\,\mathbf 1_{F_j}
\]
where the $F_j$ are finitely many bounded Lipschitz (indeed polyhedral) domains, and each $\nu_j$ is Lipschitz on $F_j$.
In other words, $\nu$ satisfies the structural hypothesis of \Cref{thm: stability.measure.preserving}. 

The same reasoning applies after restricting to subsets such as
$\Omega_1=g_\varepsilon^{-1}(m_{2\theta}^{-1}(\Omega))\cap\supp\,\rho$:
since $m_{2\theta}$ is piecewise affine, the set $m_{2\theta}^{-1}(\Omega)$ is a finite union of polytopes,
and since $g_\varepsilon$ is piecewise affine in one coordinate, $g_\varepsilon^{-1}(m_{2\theta}^{-1}(\Omega))$ is again a finite union of Lipschitz  domains after refining the interval partition.
Intersecting with $\supp\,\rho$ preserves the ``finite union of Lipschitz domains'' property.
Therefore the restricted measures appearing in the bounds also satisfy the assumptions of \Cref{thm: stability.measure.preserving}.
\end{remark}

\section{Proof of \Cref{thm:sobolev_to_switches}}  \label{sec: proof.2}

\subsection{Preliminaries}

We first prove 

\begin{lemma}\label{lem:TV_stability_diffeo}
Let $\rho\in L^\infty\cap C^{0,1}(\R^d)$.
Let $\phi,\psi:\R^d\to\R^d$ be $C^1$ diffeomorphisms and set $\eta\coloneqq \phi^{-1}\circ\psi$.
Assume $\rho$ is supported in a bounded set $\omega$ and define
\[
M\coloneqq\|\log\det\nabla\eta\|_{L^\infty(\omega)}.
\]
Then
\begin{equation}\label{eq:TV_stability_diffeo}
|\phi_\#\mu-\psi_\#\mu|_{\mathsf{TV}(\R^d)}
\le
e^{M}\|\rho\|_{C^{0,1}}\|\eta-\mathsf{id}\|_{L^1}
+e^{M}\|\rho\|_{L^\infty}\|\log\det\nabla\eta\|_{L^1}
\end{equation}
where in the equation above $L^1=L^1(\omega)$.
\end{lemma}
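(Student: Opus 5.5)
We begin with the identity derived in \Cref{sec: one.other.ineq}: since total variation is invariant under pushforward by the bijection $\phi^{-1}$,
\[
|\phi_\#\mu-\psi_\#\mu|_{\mathsf{TV}(\R^d)}=|\mu-\eta_\#\mu|_{\mathsf{TV}(\R^d)}=\int_{\R^d}\bigl|\rho(\eta(y))\,|\det\nabla\eta(y)|-\rho(y)\bigr|\diff y .
\]
Here we use that $\eta_\#\mu$ has density $\rho\circ\eta^{-1}\,|\det\nabla\eta^{-1}|$, and we change variables $x=\eta(y)$. Since $\eta$ is an orientation-preserving diffeomorphism on the relevant region (so that $\log\det\nabla\eta$ is meaningful), we write $J\coloneqq\det\nabla\eta>0$. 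The definition of $M$ then gives $e^{-M}\le J\le e^{M}$ on $\omega$.

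Next we split the integrand by adding and subtracting $\rho(\eta)$:
\[
|\rho(\eta)J-\rho|\le J\,|\rho(\eta)-\rho|\ +\ \rho\,|J-1|\qquad\text{or}\qquad |\rho(\eta)J-\rho|\le |\rho(\eta)-\rho|\,J + \|\rho\|_{L^\infty}|J-1|.
\]
The first term is at most $e^{M}\|\rho\|_{C^{0,1}}|\eta-\mathsf{id}|$. For the second we use the elementary inequality $|e^{s}-1|\le e^{|s|}|s|$ with $s=\log J$, which gives $|J-1|\le e^{M}|\log\det\nabla\eta|$. Integrating over $\omega$ then yields exactly the two terms in \eqref{eq:TV_stability_diffeo}.

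The main obstacle is that the integral runs over all of $\R^d$, whereas the bounds on $J$ and the norms $L^1(\omega)$ only see $\omega$. On $\omega^c$ we have $\rho(y)=0$, so the integrand there reduces to $\rho(\eta(y))J(y)$. This is nonzero only on the set $\eta^{-1}(\omega)\setminus\omega$, where $J$ is not controlled by $M$. To handle this leakage we will use mass balance. Because $\int\rho(\eta)J=\int\rho=1$, we have
\[
\int_{\omega^c}\rho(\eta)J=\int_\omega\bigl(\rho-\rho(\eta)J\bigr)\le\int_\omega|\rho(\eta)J-\rho|.
\]
The leakage is therefore bounded by the interior contribution. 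This costs a factor $2$, which we would either absorb into the constant or avoid by the cleaner substitution $x=\eta(y)$ on that piece. Our first attempt will be this mass-conservation trick. If the factor $2$ cannot be avoided, we will instead interpret $\omega$ as a set containing both $\supp\rho$ and $\eta^{-1}(\supp\rho)$; this is harmless in the intended application, since there $\eta$ is close to the identity and $\omega$ can be enlarged.
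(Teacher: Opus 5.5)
Your argument on $\omega$ is exactly the paper's: invariance of $\mathsf{TV}$ under $(\phi^{-1})_\#$, the change of variables, the splitting $|\rho(\eta)J-\rho|\le J|\rho(\eta)-\rho|+\rho|J-1|$, and $|e^s-1|\le e^{|s|}|s|$. You differ on $\omega^c$, and there you are right and the paper is not. Its proof disposes of the complement with ``since $\rho$ is supported in $\omega$, the integral reduces to $\omega$'', but the term $\rho(\eta(y))\det\nabla\eta(y)$ is supported in $\eta^{-1}(\omega)$, not in $\omega$. Your mass-balance identity $\int_{\omega^c}\rho(\eta)J=\int_\omega(\rho-\rho(\eta)J)$ is correct; it only uses $\int_{\R^d}\rho(\eta)J=\int_\omega\rho$. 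So what you prove is the inequality with an extra factor $2$ on the right-hand side. That is all Step~4 of the proof of \Cref{thm:sobolev_to_switches_TV} needs, since its constants are generic.

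Drop the idea of removing the $2$ by substituting $x=\eta(y)$. That term equals $\mu(\eta(\omega^c))$, the mass carried into $\omega$ by points starting outside it, and $\|\eta-\mathsf{id}\|_{L^1(\omega)}$ does not see their displacement. In fact the statement is false with constant $1$. Take $d=2$, $Q=[-A,A]\times[0,2A]$, $\rho=\min\{L\dist(\cdot,\partial Q),h_0\}$ on $Q$ (normalized by the choice of $A$), and $\omega=Q$. Let $C=[0,w]\times[0,\ell]$ with $\ell\le h_0/L$, so that $\rho(y)=Ly_2$ on $C$, and let $C'=[0,kw]\times[-\ell/k,0]$. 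Let $T$ be the involution exchanging $C$ and $C'$ through $(y_1,y_2)\mapsto(ky_1,-y_2/k)$ and its inverse, fixing everything else. Then $|\mu-T_\#\mu|_{\mathsf{TV}}=2\mu(C)=Lw\ell^2$, while $\|\rho\|_{C^{0,1}}\|T-\mathsf{id}\|_{L^1(\omega)}\le(L+h_0)\,w\ell\,(\ell/2+\ell/k+kw)$. The ratio of the two sides therefore tends to $2$ as $h_0/L\to0$, $k\to\infty$ and $kw/\ell\to0$. Now approximate $T$ in $L^1$ by smooth volume-preserving diffeomorphisms equal to the identity off a compact set, so that $M=0$ and $\log\det\nabla\eta\equiv0$. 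Both sides pass to the limit, the left one because $\rho$ is Lipschitz.

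So the lemma must be stated either with the factor $2$ or, as in your fallback, with $\omega\supseteq\supp\rho\cup\eta^{-1}(\supp\rho)$. The first option is the one to keep. In the application, $e_N$ and $\delta_N$ are controlled only on $\Omega$, which need not contain $\eta^{-1}(\supp\rho)$.
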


\begin{proof}[Proof of \Cref{lem:TV_stability_diffeo}]
Since $\phi$ is a bijection, total variation is invariant under pushforward by $\phi^{-1}$:
\[
|\phi_\#\mu-\psi_\#\mu|_{\mathsf{TV}}
=
|(\phi^{-1})_\#(\phi_\#\mu-\psi_\#\mu)|_{\mathsf{TV}}
=
|\mu-\eta_\#\mu|_{\mathsf{TV}}.
\]
Because $\eta$ is a $C^1$ diffeomorphism, changing variables $y=\eta(x)$ yields the standard identity
\[
|\mu-\eta_\#\mu|_{\mathsf{TV}}
=
\int_{\R^d}\big|\rho(\eta(x))\det\nabla\eta(x)-\rho(x)\big|\diff x.
\]
Since $\rho$ is supported in $\omega$, the integral reduces to $\omega$ up to a null set.
Split
\[
|\rho(\eta)\det\nabla\eta-\rho|
\le
|\rho(\eta)-\rho|\,|\det\nabla\eta|
+\rho\,|\det\nabla\eta-1|.
\]
For the first term, $\rho$ is globally Lipschitz so
$|\rho(\eta(x))-\rho(x)|\le \|\rho\|_{C^{0,1}}\,|\eta(x)-x|$.
Also $|\det\nabla\eta|=\exp(\log\det\nabla\eta)\le e^{M}$ on $\omega$.
Therefore
\[
\int_\omega |\rho(\eta)-\rho|\,|\det\nabla\eta|
\le
e^{M}\,\|\rho\|_{C^{0,1}}\,\|\eta-\mathsf{id}\|_{L^1(\omega)}.
\]
For the second term, use $|e^a-1|\le e^{|a|}|a|$ with $a=\log\det\nabla\eta$ to get
$|\det\nabla\eta-1|\le e^{M}|\log\det\nabla\eta|$ on $\omega$.
Hence
\[
\int_\omega \rho\,|\det\nabla\eta-1|
\le
\|\rho\|_{L^\infty}\,e^{M}\,\|\log\det\nabla\eta\|_{L^1(\omega)}.
\]
Combining proves \eqref{eq:TV_stability_diffeo}.
\end{proof}

Henceforth, $\mathscr{F}$ denotes the Fourier transform.
Let $\Omega\subset\R^d$ be bounded and set $\langle \xi \rangle \coloneqq \sup_{x\in\Omega} |x \cdot \xi|$. 

\begin{definition}
Let $s\ge 0$. 
For $f:\Omega\to\mathbb{R}$ define
\begin{align*}
\|f\|_{\mathscr{S}_s(\Omega)}
\coloneqq
\inf\Bigg\{
&\int_{\mathbb{R}^d} (1+\langle \xi\rangle^s) |\mathscr{F}\tilde f|(\diff\xi)\colon\\
&\tilde f|_{\Omega}=f, \mathscr{F}\tilde f \text{ is a finite signed Radon measure}
\Bigg\},
\end{align*}
where $|\mathscr{F}\tilde f|$ denotes the total variation measure of $\mathscr{F} \tilde f$.

We set $\mathscr{S}_s(\Omega)\coloneqq \{f\colon\|f\|_{\mathscr{S}_s(\Omega)}<\infty\}$.
\end{definition}

\begin{definition}
Let $s\ge 0$. For a probability measure
$\chi\in\mathcal{P}(\mathbb{R}^{2d+1})$ define
\[
f_{\chi}(x)\coloneqq\int_{\mathbb{R}^{2d+1}}
w(a\cdot x + b)_+ \diff\chi(w,a,b).
\]
For $f:\Omega\to\mathbb{R}$ define the admissible set $\mathscr{A}_f\coloneqq\{\chi\in\mathcal{P}(\mathbb{R}^{2d+1})\colon  f_{\chi}|_{\Omega}=f\},$
and the norm
\[
\|f\|_{\mathscr{B}_s(\Omega)}
\coloneqq 
\inf_{\chi\in\mathscr{A}_f}
\int |w|(\langle a\rangle+|b|)^s \diff\chi(w, a, b).
\]
We set $\mathscr{B}_s(\Omega)\coloneqq\{f\colon \|f\|_{\mathscr{B}_s(\Omega)}<\infty\}$---the \emph{Barron} space.
\end{definition}

\begin{remark}[Vector-valued extension]
For $f=(f^1,\dots,f^m):\Omega\to\mathbb{R}^m$ we use the componentwise norms
\[
\|f\|_{\mathscr{B}_s(\Omega)}\coloneqq\sum_{j=1}^m \|f^j\|_{\mathscr{B}_s(\Omega)},
\qquad
\|f\|_{\mathscr{S}_s(\Omega)}\coloneqq \sum_{j=1}^m \|f^j\|_{\mathscr{S}_s(\Omega)}.
\]
All embeddings below apply componentwise.
\end{remark}

\subsection{The proof}

\begin{proof}[Proof of \Cref{thm:sobolev_to_switches_TV}]

The proof is split in several steps.

\subsubsection*{Step 1.}

For $s>d/2+1$, there exists $u\in L^2([0,1];H^s(\mathbb{R}^d;\mathbb{R}^d))$ such that
$\phi_u^1=\phi$ and
\[
\int_0^1 \|u(t)\|_{H^s}^2 \diff t \le \mathsf{A}_s(\phi)+1,
\]
where we add $1$ only to avoid discussing attainment of the infimum. Sobolev embedding gives $u(t)\in L^\infty \cap C^{1}(\R^d)$ for a.e.\ $t$,
hence $u(t,\cdot)$ is globally Lipschitz and the flow $X(t,x)$ of
$\dot x=u(t,x)$ exists globally and is a $C^1$ diffeomorphism for each $t$.
Moreover, there is a constant $C_{d,s}>0$ such that
\[
\|u(t)\|_{L^\infty}+\|\nabla u(t)\|_{L^\infty}\le C_{d,s}\|u(t)\|_{H^s}.
\]
In particular $t\mapsto \|u(t)\|_{L^\infty}$ is in $L^1(0,1)$, so the trajectories
starting in $\Omega$ remain in a fixed ball:
define
\[
R_*\coloneqq 1+\sup_{x\in\Omega}|x| + \int_0^1 \|u(t)\|_{L^\infty}\diff t,
\qquad
\widetilde\Omega\coloneqq B(0,R_*).
\]
Then $X(t,\Omega)\subset\widetilde\Omega$ for all $t\in[0,1]$.

We will build the approximation so that also $Y(t,\Omega)\subset\widetilde\Omega$
for all $t$, hence all evaluations of $u$ and $\nabla_x\cdot u$ along both flows occur
inside $\widetilde\Omega$.

\subsubsection*{Step 2.}

We claim that, for $s> d/2+2$ and bounded $\Omega$, there is $C_{d,s,\Omega}$ such that
for every $f\in H^s(\mathbb{R}^d)$,
\begin{equation} \label{eq:sobolev_to_F2}
\|f\|_{\mathscr{S}_2(\Omega)}
\le C_{d,s,\Omega}\|f\|_{H^s(\mathbb{R}^d)}.
\end{equation}
Indeed, fix $\alpha\coloneqq s-2>d/2$. For a (suitable) extension $\tilde f$ of $f$ with
$\mathscr{F} \tilde f\in L^2$ we estimate by Cauchy--Schwarz
\begin{align*}
\int_{\mathbb{R}^d} (1+\langle \xi\rangle)^2 |\mathscr{F}\tilde f(\xi)|\diff \xi
\le
&\left(\int_{\mathbb{R}^d} (1+\langle \xi\rangle)^{-2\alpha}\diff \xi\right)^{\frac12}\\
&\left(\int_{\mathbb{R}^d} (1+\langle\xi\rangle)^{2(2+\alpha)} |\mathscr{F} \tilde f(\xi)|^2\diff\xi\right)^{\frac12},
\end{align*}
and the first integral is finite since $\alpha> d/2$, while the second term is controlled by
$\|f\|_{H^{2+\alpha}}=\|f\|_{H^s}$. Taking the infimum over admissible extensions yields
\eqref{eq:sobolev_to_F2}.

Next, by the embedding inequalities between $\mathscr{S}_s$ and $\mathscr{B}_s$
(see \cite[Theorem~1.1]{wu2023embedding}),
\begin{equation}\label{eq:F2_to_B1}
\|f\|_{\mathscr{B}_1(\Omega)} \le C_{d,\Omega}\|f\|_{\mathscr{S}_2(\Omega)}.
\end{equation}
Applying \eqref{eq:sobolev_to_F2}--\eqref{eq:F2_to_B1} componentwise to $u(t,\cdot)$ gives
\[
\|u(t,\cdot)\|_{\mathscr{B}_1(\Omega)}
\le C_{d,s,\Omega}\|u(t,\cdot)\|_{H^s(\mathbb{R}^d)}\qquad\text{for a.e.\ }t,
\]
and therefore 
\[
\mathsf{E}_B(u)^2
\coloneqq
\int_0^1 \|u(t,\cdot)\|_{\mathscr{B}_1(\Omega)}^2\diff t
\]
satisfies
\[
\mathsf{E}_B(u)^2
\le C_{d,s,\Omega}\int_0^1 \|u(t)\|_{H^s}^2 \diff  t
\le C_{d,s,\Omega}(\mathsf{A}_s(\phi)+1).
\]
Set $R\coloneqq 1+\sup_{x\in\widetilde\Omega}|x|$. For $\theta=(w,a,b)\in\R^{2d+1}$ define
\begin{equation}\label{eq: cost.Maurey}
c(\theta)\coloneqq |w|(R|a|+|b|).
\end{equation}
Define
\[
g_\theta(x)\coloneqq
\begin{cases}
\dfrac{w(a\cdot x+b)_+}{c(\theta)} & c(\theta)>0,\\[0.05em]
0 & c(\theta)=0,
\end{cases}
\quad
h_\theta(x)\coloneqq
\begin{cases}
\dfrac{(w\cdot a)\mathbf 1_{\{a\cdot x+b>0\}}}{c(\theta)} & c(\theta)>0,\\[0.05em]
0 & c(\theta)=0.
\end{cases}
\]
Then for all $\theta$:
\begin{equation}\label{eq:bounds_atoms}
 \|g_\theta\|_{L^\infty(\widetilde\Omega)}\le 1,
\qquad
\|g_\theta\|_{C^{0,1}(\widetilde\Omega)}\le 1,
\qquad
\|h_\theta\|_{L^\infty(\widetilde\Omega)}\le 1.
\end{equation}
(The Lipschitz bound for $g_\theta$ uses that $x\mapsto(a\cdot x+b)_+$ is
$|a|$-Lipschitz and $|a\cdot x+b|\le R|a|+|b|$ on $\widetilde\Omega$.)

By definition of the Barron norm and a measurable selection argument,
for a.e.\ $t$ there exists a probability measure $\chi_t$ on $\R^{2d+1}$ such that
\begin{equation}\label{eq:rep_u_full_dom}
u(t,x)=\int_{\R^{2d+1}} w(a\cdot x+b)_+\diff \chi_t(\theta)
=\int_{\R^{2d+1}} c(\theta)g_\theta(x)\diff \chi_t(\theta)
,
\end{equation}
for all $ x\in\widetilde\Omega$ and
\[
r(t)\coloneqq \int_{\R^{2d+1}} c(\theta)\diff \chi_t(\theta)
\le 2\|u(t,\cdot)\|_{\mathscr{B}_1(\widetilde\Omega)}.
\]
Hence $r\in L^2(0,1)$ and
\begin{equation}\label{eq: r.bound.action}
\|r\|_{L^2(0,1)}^2 \lesssim \int_0^1 \|u(t)\|_{H^s}^2\diff t
\lesssim \mathsf{A}_s(\phi)+1.
\end{equation}
In particular, one has 
\begin{equation}\label{eq:u-sup-lip-by-r}
\|u(t,\cdot)\|_{L^\infty(\widetilde\Omega)}\le r(t),
\qquad
\|u(t,\cdot)\|_{C^{0,1}(\widetilde\Omega)}\le \frac{r(t)}{R}.
\end{equation}
Moreover, differentiating under the integral in \eqref{eq:rep_u_full_dom}
is justified by dominated convergence (since $|(w\cdot a)\mathbf 1_{\{a\cdot x+b>0\}}|
\le |w||a|\le c(\theta)$ and $\int c\diff \chi_t=r(t)<\infty$),
and we obtain for a.e.\ $x\in\widetilde\Omega$:
\begin{align}\label{eq:rep_div_full_dom}
\nabla_x\cdot u(t,x)
&=
\int_{\R^{2d+1}} (w\cdot a)\,\mathbf 1_{\{a\cdot x+b>0\}}\diff \chi_t(\theta)
\nonumber\\
&=
\int_{\R^{2d+1}} c(\theta)\,h_\theta(x)\diff \chi_t(\theta).
\end{align}
Finally, since $s>d/2+2$, we have $\nabla_x\cdot u(t)\in L^\infty\cap C^{0,1}(\R^d)$ for a.e.\ $t$ and
there exists $C_{d,s}>0$ such that
\begin{equation}\label{eq:div_Lip_def}
\|\nabla_x\cdot u(t)\|_{C^{0,1}(\widetilde\Omega)}\le C_{d,s}\|u(t)\|_{H^s}.
\end{equation}
Define
\[
\ell(t)\coloneqq \|\nabla_x\cdot u(t)\|_{C^{0,1}(\widetilde\Omega)}\in L^2(0,1).
\]

\subsubsection*{Step 3.}
Fix $N\in\N$, set $\Delta\coloneqq 1/N$ and $I_k=[(k-1)\Delta,k\Delta)$.
Define the finite measures
\[
M_k(A)\coloneqq \int_{I_k}\int_A c(\theta)\diff \chi_t(\theta)\diff t,
\qquad
r_k\coloneqq M_k(\R^{2d+1})=\int_{I_k} r(t)\diff t.
\]
If $r_k>0$ set $\nu_k\coloneqq M_k/r_k$; otherwise fix an arbitrary $\nu_k$.
Sample $\theta_k\sim\nu_k$ independently for $k=1,\dots,N$ and define the
piecewise-constant vector field
\[
u_N(t,x)\coloneqq N\,r_k\,g_{\theta_k}(x)\qquad(t\in I_k).
\]
Since $g_{\theta_k}(x)=w_k(a_k\cdot x+b_k)_+/c(\theta_k)$, on $I_k$ we can rewrite
\[
u_N(t,x)=w'_k\,(a_k\cdot x+b_k)_+,
\qquad
w'_k\coloneqq \frac{N r_k}{c(\theta_k)}\,w_k.
\]
Thus $u_N$ is implemented by a single neuron with frozen parameters on each $I_k$,
hence the control has exactly $N$ switches.

Let $X(t,x)$ be the flow of $u$ and $Y(t,x)$ the flow of $u_N$, with
$X(0,x)=Y(0,x)=x$. Set $t_k\coloneqq k\Delta$ and define $X_k\coloneqq X(t_k,\cdot)$, $Y_k\coloneqq Y(t_k,\cdot)$.

\subsubsection*{Step 3.1.}
Define
\[
R_k^X(x)\coloneqq\int_{I_k}\left(u(t,X(t,x))-u(t,X_{k-1}(x))\right)\diff t,
\]
\[
R_k^Y(x)\coloneqq \int_{I_k}\left(u_N(t,Y(t,x))-u_N(t,Y_{k-1}(x))\right)\diff t.
\]
Then
\[
X_k=X_{k-1}+\int_{I_k}u(t,X_{k-1})\diff t+R_k^X,
\qquad
Y_k=Y_{k-1}+\int_{I_k}u_N(t,Y_{k-1})\diff t+R_k^Y.
\]
Since $u_N=N r_k g_{\theta_k}$ on $I_k$ and $|I_k|=\Delta$, we have
$\int_{I_k}u_N(t,\cdot)\diff t=r_k g_{\theta_k}(\cdot)$, so
\[
Y_k=Y_{k-1}+r_k g_{\theta_k}(Y_{k-1})+R_k^Y.
\]
Set $e_k\coloneqq X_k-Y_k$ and define the fluctuation
\[
\eta_k(x)\coloneqq \int_{I_k}u(t,Y_{k-1}(x))\diff t-r_k g_{\theta_k}(Y_{k-1}(x)).
\]
Subtracting and adding $\int_{I_k}u(t,Y_{k-1})\diff t$ yields
\begin{equation}\label{eq:e_rec}
e_k
=
e_{k-1}
+\underbrace{\int_{I_k}\left(u(t,X_{k-1})-u(t,Y_{k-1})\right)\diff t}_{=:A_k}
+\eta_k
+\underbrace{(R_k^X-R_k^Y)}_{=:R_k}.
\end{equation}
For \(k\ge 1\), let \(\mathcal F_{k-1}\) denote the collection of events that are
completely determined by the first \(k-1\) samples \((\theta_1,\dots,\theta_{k-1})\), namely
\[
\mathcal F_{k-1}
\coloneqq
\left\{\,(\theta_1,\dots,\theta_{k-1})^{-1}(B)\ :\ B\in\mathscr B\left((\R^{2d+1})^{k-1}\right)\,\right\}.
\]
(Equivalently, \(\mathcal F_{k-1}\) is the smallest \(\sigma\)-algebra for which each
\(\theta_1,\dots,\theta_{k-1}\) is measurable.) Using \eqref{eq:u-sup-lip-by-r} we have
\[
\|A_k\|_{L^2(\Omega)}
\le \int_{I_k}\|u(t,\cdot)\|_{C^{0,1}(\widetilde\Omega)}\diff t\ \|e_{k-1}\|_{L^2(\Omega)}
\le \frac{r_k}{R}\|e_{k-1}\|_{L^2(\Omega)}.
\]
Also, for $t\in I_k$ and $x\in\Omega$,
\[
|X(t,x)-X_{k-1}(x)|\le \int_{I_k} \|u(s,\cdot)\|_{L^\infty(\widetilde\Omega)}\diff s \le r_k,
\]
hence
\[
|R_k^X(x)| \le \int_{I_k}\|u(t,\cdot)\|_{C^{0,1}(\widetilde\Omega)}|X(t,x)-X_{k-1}(x)|\diff t
\le \int_{I_k}\frac{r(t)}{R}\diff t\ r_k
\le \frac{r_k^2}{R}.
\]
Similarly, using \eqref{eq:bounds_atoms}, $|g_{\theta_k}|\le 1$ on $\widetilde\Omega$ and $Y(t,x)\in\widetilde\Omega$, we have
$|u_N(t,Y(t,x))|\le N r_k$ on $I_k$, hence $|Y(t,x)-Y_{k-1}(x)|\le r_k$ and
\[
|R_k^Y(x)|
\le \int_{I_k}\|u_N(t,\cdot)\|_{C^{0,1}(\widetilde\Omega)}|Y(t,x)-Y_{k-1}(x)|\diff t
\le \int_{I_k}\frac{N r_k}{R}\diff t r_k
= \frac{r_k^2}{R}.
\]
Therefore,
\begin{equation}\label{eq:step3-Rk-bound-rigorous}
\|R_k\|_{L^2(\Omega)}\le \frac{2|\Omega|^{1/2}}{R}r_k^2.
\end{equation}
Conditionally on $\mathcal F_{k-1}$, $Y_{k-1}$ is deterministic and $\theta_k\sim\nu_k$ is independent.
By construction,
\[
\mathbb{E}[r_k g_{\theta_k}(z)\mid\mathcal F_{k-1}]
=r_k\int g_\theta(z)\diff\nu_k(\theta)=\int g_\theta(z)\diff M_k(\theta)=\int_{I_k}u(t,z)\diff t
\]
for every $z\in\widetilde\Omega$.
Applying this with $z=Y_{k-1}(x)\in\widetilde\Omega$ yields $\mathbb{E}[\eta_k(x)\mid\mathcal F_{k-1}]=0$.
Moreover $|g_{\theta_k}|\le 1$ on $\widetilde\Omega$ implies $|r_k g_{\theta_k}(Y_{k-1}(x))|\le r_k$, hence
\begin{equation}\label{eq:step3-eta-var-rigorous}
\mathbb{E}\left[\|\eta_k\|_{L^2(\Omega)}^2\mid\mathcal F_{k-1}\right]\le |\Omega| r_k^2.
\end{equation}
Taking $L^2$-norm squared in \eqref{eq:e_rec} and conditioning on $\mathcal F_{k-1}$, we expand
in $L^2(\Omega)$:
\begin{align*}
\mathbb{E}\Big[\|e_k\|_{L^2}^2\mid&\mathcal F_{k-1}\Big]
=\|e_{k-1}+A_k+R_k\|_{L^2}^2
\\
&+2\left\langle e_{k-1}+A_k+R_k, \mathbb{E}[\eta_k\mid\mathcal F_{k-1}]\right\rangle +\mathbb{E}\left[\|\eta_k\|_{L^2}^2\mid\mathcal F_{k-1}\right].
\end{align*}
Since $\mathbb{E}[\eta_k\mid\mathcal F_{k-1}]=0$, the cross term vanishes and \eqref{eq:step3-eta-var-rigorous} gives
\begin{equation}\label{eq:step3-cond-energy-rigorous}
\mathbb{E}\left[\|e_k\|_{L^2}^2\mid\mathcal F_{k-1}\right]
\le \|e_{k-1}+A_k+R_k\|_{L^2}^2 + |\Omega| r_k^2.
\end{equation}
We bound $\|e_{k-1}+A_k+R_k\|_{L^2}^2$ as follows.
First, using the bound on $A_k$,
\[
\|e_{k-1}+A_k\|_{L^2}\le \left(1+\frac{r_k}{R}\right)\|e_{k-1}\|_{L^2}.
\]
Second, for any $\varepsilon>0$ we use  Young's inequality
\[
\|u+v\|_{L^2}^2\le (1+\varepsilon)\|u\|_{L^2}^2+\left(1+\frac1\varepsilon\right)\|v\|_{L^2}^2.
\]
Apply this with $u=e_{k-1}+A_k$, $v=R_k$, and $\varepsilon=r_k/R$.
Using \eqref{eq:step3-Rk-bound-rigorous} yields
\[
\|e_{k-1}+A_k+R_k\|_{L^2}^2
\le \left(1+\frac{r_k}{R}\right)\left(1+\frac{r_k}{R}\right)^2\|e_{k-1}\|_{L^2}^2
+ C_\Omega\left(\frac{r_k^3}{R^2}+\frac{r_k^4}{R^3}\right),
\]
hence
\[
\|e_{k-1}+A_k+R_k\|_{L^2}^2
\le \left(1+\frac{r_k}{R}\right)^3\|e_{k-1}\|_{L^2}^2
+ C_\Omega\left(\frac{r_k^3}{R^2}+\frac{r_k^4}{R^3}\right).
\]
Since $(1+x)^3\le e^{3x}$, we conclude from \eqref{eq:step3-cond-energy-rigorous} (taking expectations) that
\begin{equation}\label{eq:step3-energy-rec-rigorous}
\mathbb{E}\|e_k\|_{L^2(\Omega)}^2
\le e^{C r_k/R} \mathbb{E}\|e_{k-1}\|_{L^2(\Omega)}^2
+ C_\Omega\left(r_k^2+\frac{r_k^3}{R}+\frac{r_k^4}{R^2}\right).
\end{equation}
Iterating \eqref{eq:step3-energy-rec-rigorous} yields
\[
\mathbb{E}\|e_N\|_{L^2(\Omega)}^2
\le
C_\Omega \exp\left(C\sum_{k=1}^N \frac{r_k}{R}\right)\sum_{k=1}^N \left(r_k^2+\frac{r_k^3}{R}+\frac{r_k^4}{R^2}\right).
\]
Since $\sum_{k=1}^N r_k=\|r\|_{L^1(0,1)}$ and $R\ge 1$, we may bound
$$e^{\frac{C}{R}\sum r_k}\le e^{C\|r\|_{L^1}}$$ and absorb the $R$-factors.
Moreover, by Cauchy--Schwarz,
\begin{equation}\label{eq: int.cw}
    |r_k|=\left|\int_{I_k}r(t)\diff t\right|\leq \left(\int_{I_k}r(t)^2\diff t\right)^{\frac12}|I_k|^{\frac{1}{2}}=\frac{\left(\int_{I_k}r(t)^2\diff t\right)^2}{N^{\frac12}}
\end{equation}
hence
\[
r_k^2=\frac1N\int_{I_k} r(t)^2\diff t,
\]
therefore
$$\sum_{k=1}^N r_k^2\le \frac{\|r\|_{L^2(0,1)}^2}{N}$$
and using \eqref{eq: int.cw} for the higher-order terms we can bound
$$\sum_{k=1}^N|r_k|^{3}\leq \sum_{k=1}^N \left(\int_{I_k} r(t)^2\diff t\right)^{\frac32}N^{-\frac32}\leq N^{-\frac32}\left(\sum_{k=1}^N\int_{I_k}r(t)^2\diff t\right)^{\frac32}\leq \frac{\|r\|_{L^2(0,1)}^3}{N^{\frac32}}$$
and
$$\sum_{k=1}^N|r_k|^{4}\leq \sum_{k=1}^N \left(\int_{I_k} r(t)^2\diff t\right)^{2}N^{-2}\leq N^{-2}\left(\sum_{k=1}^N\int_{I_k}r(t)^2\diff t\right)^{2}\leq \frac{\|r\|_{L^2(0,1)}^4}{N^2}.$$
Since the third and fourth order powers of $r_k$ are of $o(N^{-1})$, for high enough $N$, using Cauchy-Schwarz we can bound we can bound
\begin{equation}\label{eq:Ek_flow_rate}
    \mathbb{E}\|e_N\|_{L^2(\Omega)}
\le C_\Omega \exp(C\|r\|_{L^1(0,1)})\frac{\|r\|_{L^2(0,1)}}{\sqrt N}.
\end{equation}
Consequently there exists a deterministic realization $(\theta_1,\dots,\theta_N)$ such that
\begin{equation*}
    \|Y(1,\cdot)-X(1,\cdot)\|_{L^2(\Omega)}
\le C_\Omega \exp(C\|r\|_{L^1})\frac{\|r\|_{L^2}}{\sqrt N}.
\end{equation*}
Finally, since \eqref{eq: r.bound.action}, we end up with
\begin{equation*}
    \|Y(1,\cdot)-X(1,\cdot)\|_{L^2(\Omega)}
\le C_\Omega \left(C\sqrt{\mathsf{A}(\phi)+1}\right)\frac{\sqrt{\mathsf{A}(\phi)+1}}{\sqrt N}.
\end{equation*}

\subsubsection*{Step 3.2.}
Define
\[
q_X(t,x)\coloneqq \log\det\nabla_x X(t,x),
\qquad
q_Y(t,x)\coloneqq \log\det\nabla_x Y(t,x).
\]
Set
\[
A_X(t,x)\coloneqq \nabla_x X(t,x)\in\mathbb{R}^{d\times d}.
\]
Differentiate the ODE $\partial_t X(t,x)=u(t,X(t,x))$ with respect to $x$.
For each $j\in\{1,\dots,d\}$, the vector $\partial_{x_j}X(t,x)$ satisfies
\[
\begin{cases}
    \partial_t\left(\partial_{x_j}X(t,x)\right)
=\nabla u\left(t,X(t,x)\right)\,\partial_{x_j}X(t,x)\\
\partial_{x_j}X(0,x)=e_j
\end{cases}
\]
where $\nabla u(t,\cdot)$ denotes the Jacobian matrix of $u(t,\cdot)$.
Collecting these $d$ equations column-wise yields the matrix ODE
\begin{equation}\label{eq:variational-X}
\begin{cases}
\partial_t A_X(t,x)
=\nabla u\left(t,X(t,x)\right)\,A_X(t,x)\\
A_X(0,x)=I_d.
\end{cases}
\end{equation}
Analogously, for
\(
A_Y(t,x)\coloneqq \nabla_x Y(t,x),
\)
we have
\begin{equation}\label{eq:variational-Y}
\begin{cases}
\partial_t A_Y(t,x)
=\nabla u_N\left(t,Y(t,x)\right)\,A_Y(t,x)\\
A_Y(0,x)=I_d.
\end{cases}
\end{equation}
As $A_X(t,x)$ is invertible, Jacobi's formula gives
\[
\partial_t \det A_X(t,x)
=\det A_X(t,x)\,\mathrm{tr}\left(A_X(t,x)^{-1}\,\partial_t A_X(t,x)\right).
\]
Therefore,
\begin{align*}
\partial_t q_X(t,x)
&=\partial_t \log\det A_X(t,x)
=\mathrm{tr}\left(A_X(t,x)^{-1}\,\partial_t A_X(t,x)\right)\\
&=\mathrm{tr}\left(A_X(t,x)^{-1}\,\nabla u\left(t,X(t,x)\right)\,A_X(t,x)\right)
=\mathrm{tr}\left(\nabla u\left(t,X(t,x)\right)\right)\\
&=\nabla\cdot u\left(t,X(t,x)\right),
\end{align*}
where we used \eqref{eq:variational-X} and invariance of the trace under similarity.
The initial condition is
\[
q_X(0,x)=\log\det\nabla_x X(0,x)=\log\det I_d=0.
\]
In particular, for all $t$,
\[
q_X(t,x)=\int_0^t \nabla\cdot u\left(s,X(s,x)\right)\diff s.
\]
Similarly, using \eqref{eq:variational-Y},
\[
\begin{cases}
\partial_t q_Y(t,x)=\nabla\cdot u_N\left(t,Y(t,x)\right)\\
q_Y(0,x)=0
\end{cases}
\]
and
\[\quad
q_Y(t,x)=\int_0^t \nabla\cdot u_N\left(s,Y(s,x)\right)\diff s.
\]
On each $I_k$, the field $u_N(t,x)=w'_k(a_k\cdot x+b_k)_+$ is affine on each side
of the hyperplane $\{a_k\cdot x+b_k=0\}$ and globally Lipschitz.
Moreover, along any trajectory $t\mapsto Y(t,x)$ on $I_k$ the sign of
$a_k\cdot Y(t,x)+b_k$ is constant:
indeed $z(t)\coloneqq a_k\cdot Y(t,x)+b_k$ satisfies $z'(t)=(a_k\cdot w'_k)\,z(t)_+$,
so if $z(t_0)\le0$ then $z'(t)=0$ and $z(t)\le0$ for all $t\ge t_0$, while if
$z(t_0)>0$ then $z(t)=z(t_0)e^{(a_k\cdot w'_k)(t-t_0)}>0$.
Therefore 
$$\nabla_x\cdot u_N(t,Y(t,x))=(w'_k\cdot a_k)\mathbf 1_{\{a_k\cdot Y_{k-1}(x)+b_k>0\}}\text{ is constant in $t\in I_k$ }$$
for a.e.\ $x$, and using $w'_k=(Nr_k/c(\theta_k))w_k$ we get
\[
\nabla_x\cdot u_N(t,Y(t,x))=N r_k\,h_{\theta_k}(Y_{k-1}(x))
\qquad (t\in I_k),
\]
hence
\begin{align}\label{eq:qY_increment_exact}
q_Y(t_k,x)&=q_Y(t_{k-1},x)+\int_{I_k}\nabla_x\cdot u_N(t,Y(t,x))\diff t\nonumber\\
&=q_Y(t_{k-1},x)+r_kh_{\theta_k}(Y_{k-1}(x))
\end{align}
for a.e.\ $x$\footnote{Observe that \eqref{eq:qY_increment_exact} does not have any remainder term due to the fact that the divergence is constant along the trajectory. This is specific for the ReLU activation or variants.}.

For $q_X$ we write
\[
q_X(t_k,x)=q_X(t_{k-1},x)+\int_{I_k}\nabla_x\cdot u(t,X_{k-1}(x))\diff t + S_k^X(x),
\]
where the remainder
\[
S_k^X(x)\coloneqq \int_{I_k}\left(\nabla_x\cdot u(t,X(t,x))-\nabla_x\cdot u(t,X_{k-1}(x))\right)\diff t.
\]
Using $\|\nabla_x\cdot u(t)\|_{C^{0,1}(\widetilde\Omega)}=\ell(t)$ and
$|X(t,x)-X_{k-1}(x)|\le r_k$ gives
\begin{equation}\label{eq:SX_bound}
|S_k^X(x)|\le r_k\int_{I_k}\ell(t)\diff t=:r_k\,\ell_k,
\qquad
\|S_k^X\|_{L^2(\Omega)}\le |\Omega|^{1/2}r_k\ell_k.
\end{equation}
Define the log-Jacobian error $$\delta_k\coloneqq q_X(t_k,\cdot)-q_Y(t_k,\cdot)$$
and the fluctuation
\[
\zeta_k(x)\coloneqq \int_{I_k}\nabla_x\cdot u(t,Y_{k-1}(x))\diff t - r_k\,h_{\theta_k}(Y_{k-1}(x)).
\]
Subtracting the updates for $q_X$ and $q_Y$ and adding/subtracting
$\int_{I_k}\nabla_x\cdot u(t,Y_{k-1})\diff t$ yields
\begin{equation}\label{eq:delta_rec}
\delta_k
=
\delta_{k-1}
+\underbrace{\int_{I_k}\left(\nabla_x\cdot u(t,X_{k-1})-\nabla_x\cdot u(t,Y_{k-1})\right)\diff t}_{=:B_k}
+\zeta_k
+S_k^X.
\end{equation}
Since $\nabla_x\cdot u(t,\cdot)$ is $\ell(t)$-Lipschitz on $\widetilde\Omega$,
\[
\|B_k\|_{L^2(\Omega)}\le \ell_k\,\|e_{k-1}\|_{L^2(\Omega)}.
\]
As before, conditionally on $\mathcal F_{k-1}$ we have
$\mathbb E[\zeta_k(\cdot)\mid\mathcal F_{k-1}]=0$ because by \eqref{eq:rep_div_full_dom},
for every $z\in\widetilde\Omega$,
\begin{align*}
    \mathbb E[r_k h_{\theta_k}(z)\mid\mathcal F_{k-1}]
&=r_k\int h_\theta(z)\diff \nu_k(\theta)\\
&=\int h_\theta(z)\diff M_k(\theta)
=\int_{I_k}\nabla_x\cdot u(t,z)\diff t.
\end{align*}
Moreover $|h_{\theta_k}|\le1$ implies
$\mathbb E[\|\zeta_k\|_{L^2(\Omega)}^2\mid\mathcal F_{k-1}]\le |\Omega|\,r_k^2$.

A conditional energy estimate in $L^2(\Omega)$ applied to \eqref{eq:delta_rec},
using \eqref{eq:SX_bound} and the bound on $B_k$, yields a recursion of the form
\[
\mathbb E\|\delta_k\|_{L^2(\Omega)}^2
\le
e^{C\ell_k}\,\mathbb E\|\delta_{k-1}\|_{L^2(\Omega)}^2
+C\,\ell_k\,\mathbb E\|e_{k-1}\|_{L^2(\Omega)}^2
+C_\Omega\,(r_k^2+r_k^2\ell_k^2).
\]
Iterating, and using \eqref{eq:Ek_flow_rate} to control $\mathbb E\|e_{k-1}\|^2$,
together with the bounds
$\sum_k r_k^2\lesssim \|r\|_{L^2}^2/N$ and $\sum_k \ell_k^2\lesssim \|\ell\|_{L^2}^2/N$,
gives
\begin{equation*}
\mathbb E\|\delta_N\|_{L^2(\Omega)}
\le
C_\Omega\,\exp\left(C(\|r\|_{L^1(0,1)}+\|\ell\|_{L^1})\right)\,
\frac{\|r\|_{L^2(0,1)}+\|\ell\|_{L^2}}{\sqrt N}.
\end{equation*}
By \eqref{eq:div_Lip_def},
$\|\ell\|_{L^2}\lesssim \|u\|_{L^2H^s}\lesssim \sqrt{\mathsf{A}_s(\phi)+1}$ and similarly
$\|r\|_{L^2}\lesssim \sqrt{\mathsf{A}_s(\phi)+1}$, while $\|r\|_{L^1},\|\ell\|_{L^1}$
are controlled by the same quantity by Cauchy--Schwarz.

Therefore, there exists at least one deterministic realization
$(\theta_1,\dots,\theta_N)$ such that simultaneously
\begin{equation}\label{eq:det_realization_both}
\|e_N\|_{L^2(\Omega)}+\|\delta_N\|_{L^2(\Omega)}
\le
C\,\exp\left(C\sqrt{\mathsf{A}_s(\phi)}\right)\,\frac{\sqrt{\mathsf{A}_s(\phi)}}{\sqrt N}.
\end{equation}
Fix such a realization and let $\phi_\theta^1\coloneqq Y(1,\cdot)$ be the corresponding flow map.

\subsubsection*{Step 4.}
Set $\omega\coloneqq \supp\rho_0\subset\Omega$ and let $\psi\coloneqq \phi_\theta^1$.
Define $\eta\coloneqq \phi^{-1}\circ\psi$. First, since $\phi^{-1}$ is Lipschitz on $\psi(\omega)\cup \phi(\omega)$ and
$\psi$ is close to $\phi$ on $\omega$, we have
\begin{align*}
\|\eta-\Id\|_{L^1(\omega)}
&=
\|\phi^{-1}\circ\psi-\phi^{-1}\circ\phi\|_{L^1(\omega)}\\
&\le \|\phi^{-1}\|_{C^{0,1}}\,\|\psi-\phi\|_{L^1(\omega)}
\le C\,\|e_N\|_{L^2(\Omega)}.
\end{align*}
Next, observe the chain rule identity
\[
\log\det\nabla\eta(x)
=
\log\det\nabla\psi(x) - \log\det\nabla\phi(\eta(x))
=
q_Y(1,x)-q_X(1,\eta(x)).
\]
Thus
\begin{align*}
    |\log\det\nabla\eta(x)|
&\le |q_Y(1,x)-q_X(1,x)|
+|q_X(1,x)-q_X(1,\eta(x))|\\
&\le |\delta_N(x)|+\|q_X(1,\cdot)\|_{C^{0,1}}\,|\eta(x)-x|.
\end{align*}
Integrating over $\omega$ gives
\begin{align*}
   \|\log\det\nabla\eta\|_{L^1(\omega)}
&\le
\|\delta_N\|_{L^1(\omega)} + \|q_X(1,\cdot)\|_{C^{0,1}}\,\|\eta-\Id\|_{L^1(\omega)}\\
&\le C\left(\|\delta_N\|_{L^2(\Omega)}+\|e_N\|_{L^2(\Omega)}\right). 
\end{align*}
Moreover, $q_X(1,\cdot)$ is bounded on $\widetilde\Omega$ by
$\int_0^1\|\nabla_x\cdot u(t)\|_{L^\infty}\diff t<\infty$, and $q_Y(1,\cdot)$ is bounded by
$\sum_k r_k=\|r\|_{L^1}$, so $\|\log\det\nabla\eta\|_{L^\infty(\omega)}$ is bounded by
a constant depending only on $\phi$ and the chosen $u$ (hence on $\mathsf{A}_s(\phi)$).

Therefore we can apply \Cref{lem:TV_stability_diffeo} with $\rho=\rho_0$ and obtain
\begin{align*}
    |\phi_\#\mu_0-\psi_\#\mu_0|_{\mathsf{TV}}
&\le
C_{\rho_0,\phi}\left(\|\eta-\Id\|_{L^1(\omega)}+\|\log\det\nabla\eta\|_{L^1(\omega)}\right)\\
&\le
C\left(\|e_N\|_{L^2(\Omega)}+\|\delta_N\|_{L^2(\Omega)}\right).
\end{align*}
Combining this with \eqref{eq:det_realization_both} yields
\[
|\phi_\#\mu_0-\psi_\#\mu_0|_{\mathsf{TV}}
\le
C\,\exp\left(C\sqrt{\mathsf{A}_s(\phi)}\right)\,\frac{\sqrt{\mathsf{A}_s(\phi)}}{\sqrt N}.\qedhere
\]
\end{proof}

\subsection{Discussion} \label{sec: discussion}

The Maurey empirical method \cite{pisier1981remarques} is an infinite-dimensional avatar of the Carathéodory theorem and says that if $h$ belongs to the closed convex hull of a bounded set
$S$ in a Hilbert space $H$, then $h$ can be approximated by an average of
$m$ points in $S$ with error $\lesssim m^{-1/2}$. In approximation theory for Barron functions,
this is exactly the Monte--Carlo proof of the width--$m$ rate
$\|f-f_m\|\lesssim m^{-1/2}$, where $f_m$ is a two-layer (width $m$) perceptron obtained by
sampling $m$ neurons from a representing Barron measure. The construction in Step~3 uses the same  mechanism, but the $m$ samples
are placed along time rather than along width: we keep width $=1$, and we sample $N$ neurons across $N$ time-intervals, producing a piecewise-constant
control with $N$ switches and an $N^{-1/2}$ error---hence $N\sim \varepsilon^{-2}$.

A surprising feature of the argument leading to the  $\mathsf{TV}$--control in the spirit of
\Cref{thm:sobolev_to_switches} is that we do not impose an explicit polar--type structure on the target map, as we do in
\Cref{thm: gen.thm} nor we take explicit care on controlling the compressible part of the map in its construction.  
The structure is already encoded in \eqref{eq: cost.Maurey} and therefore in the Maurey method itself. 
Crucially, the ReLU makes the log--Jacobian increment on each time interval $I_k$ explicit. This is not the case for arbitrary activation functions. 
A naive Maurey-in-time scheme may still approximate the map, but it can fail to provide any uniform control on the log--Jacobian unless one redesigns the cost \eqref{eq: cost.Maurey} and keeps track of additional remainder terms. 

We provide some detail on this discussion. Let us go back to \eqref{eq:qY_increment_exact} and consider a within--interval remainder:
\begin{align*}
q_Y(t_k,x)-q_Y(t_{k-1},x)&=\int_{I_k}\nabla_x\cdot u_N(t,Y(t,x))\diff t=r_kh_{\theta_k}(Y_{k-1}(x)).
\end{align*}
As mentioned in the proof, this equality is only true for the ReLU. For a general Lipschitz activation,
\[
q_Y(t_k,x)-q_Y(t_{k-1},x)
= r_kh_{\theta_k}\left(Y(t_{k-1},x)\right)+\varepsilon_k(x),
\]
where we obtain an additional remainder
\[
\varepsilon_k(x)\coloneqq \int_{I_k}\left(h_{\theta_k}(Y(t,x))-h_{\theta_k}(Y(t_{k-1},x))\right)\diff t.
\]
Controlling $\varepsilon_k$ requires a bound on the spatial Lipschitz constant of $h_{\theta}$ (or of $\nabla\cdot v_\theta$),
because typically $|Y(t,x)-Y(t_{k-1},x)|\lesssim r_k$ on $I_k$.
For a smooth activation $\sigma$ the chain rule gives
\[
\|\nabla_x(\nabla\cdot v_\theta)\|_{L^\infty(\widetilde\Omega)}
\lesssim |w||a|^2\|\sigma''\|_{L^\infty}.
\]
Thus, even though $\nabla\cdot v_\theta\in L^\infty$ for each fixed $\theta$,   quantitative control of the extra remainder
typically forces integrability/moment control of $|a|^2$ under $\chi_t$, i.e. we would need
\begin{equation*}
    \int_{\mathbb{R}^{2d+
    1}} |w|(|a|^2+|b|)\diff \chi_t(w,a,b)<+\infty
\end{equation*}
to control the increments, which leads to more moments controlled than what is required in Maurey's method for approximating the flow map \eqref{eq: cost.Maurey}-\eqref{eq:rep_u_full_dom}.
This is the precise sense in which ReLU is special: it removes $\varepsilon_k$ altogether, so one never needs to pay
for these higher--order ``$|a|^2$'' effects in the log--Jacobian recursion.

\section{Proof of \Cref{prop:KR.in.D}}\label{sec: proof.prop.kr}

\begin{proof} 
Write $x_{1:k}=(x_1,\dots,x_k)$.
For $i\in\{0,1\}$ let $\rho_i^{1:k}$ denote the $k$-dimensional marginal density on
$[0,1]^d$ and $\rho_i^{k\mid 1:k-1}$ the corresponding conditional density.
Since $\rho_i\in C^{s}([0,1]^d)$ and is strictly positive, all the marginals and conditionals
are well-defined, strictly positive, and belong to $C^{s}$.
Define the conditional distribution function
\[
F_{i,k}(x_k\mid x_{1:k-1})
\coloneqq \int_{0}^{x_k}\rho_i^{k\mid 1:k-1}(t\mid x_{1:k-1})\diff t.
\]
For each fixed $x_{1:k-1}$, $F_{i,k}(\,\cdot\,\mid x_{1:k-1})$ is $C^{s+1}$ and strictly increasing
from $0$ to $1$, hence a $C^{s+1}$ diffeomorphism of $(0,1)$ onto $(0,1)$. The KR map is defined recursively by
\begin{align*}
(\phi_{\mathrm{KR}})_1(x_1)&=F_{1,1}^{-1}(F_{0,1}(x_1)),\\
(\phi_{\mathrm{KR}})_k(x_{1:k})&=F_{1,k}(\cdot\mid (\phi_{\mathrm{KR}})_{1:k-1}(x_{1:k-1}))^{-1}\left(F_{0,k}(x_k\mid x_{1:k-1})\right).
\end{align*}
By induction in $k$ and the implicit function theorem applied to the equation
$$F_{1,k}((\phi_{\mathrm{KR}})_k(x_{1:k})\mid (\phi_{\mathrm{KR}})_{1:k-1}(x_{1:k-1}))=F_{0,k}(x_k\mid x_{1:k-1}),$$
one obtains $(\phi_{\mathrm{KR}})_k\in C^{s}$ for each $k$, hence $\phi_{\mathrm{KR}}\in C^{s}([0,1]^d;\mathbb R^d)$.
Moreover, differentiating in $x_k$ yields the standard diagonal formula
\[
\partial_{x_k}(\phi_{\mathrm{KR}})_k(x_{1:k})
=\frac{\rho_0^{k\mid 1:k-1}(x_k\mid x_{1:k-1})}{
\rho_1^{k\mid 1:k-1}((\phi_{\mathrm{KR}})_k(x_{1:k})\mid (\phi_{\mathrm{KR}})_{1:k-1}(x_{1:k-1}))}>0,
\]
so $x_k\mapsto (\phi_{\mathrm{KR}})_k(x_{1:k-1},x_k)$ is strictly increasing for each $x_{1:k-1}$.
Triangular monotonicity implies that $\phi_{\mathrm{KR}}$ is a bijection $(0,1)^d\to(0,1)^d$ with $C^{s}$ inverse,
i.e. a $C^{s}$ diffeomorphism of $(0,1)^d$.

Define a path of triangular maps $\phi_t:(0,1)^d\to(0,1)^d$ by componentwise convex interpolation
\[
\phi_{t,k}(x_{1:k})=(1-t)x_k+t(\phi_{\mathrm{KR}})_k(x_{1:k}),\qquad k=1,\dots,d.
\]
Each $\phi_t$ maps $(0,1)^d$ into itself and is a triangular diffeomorphism of $(0,1)^d$ because
\[
\partial_{x_k}(\phi_{\mathrm{KR}})_{t,k}=(1-t)+t\,\partial_{x_k}(\phi_{\mathrm{KR}})_k>0.
\]
Set
\[
u(t,\cdot)\coloneqq \partial_t \phi_t\circ \phi_t^{-1}=(\phi_{\mathrm{KR}}-\mathsf{id})\circ \phi_t^{-1}.
\]
We have $\phi_{\mathrm{KR}}-\mathsf{id}\in H^s((0,1)^d)$.
For $s>d/2+1$, composition with $C^{s}$ diffeomorphisms preserves $H^s$ regularity,
hence $u\in L^\infty([0,1];H^s((0,1)^d;\mathbb R^d))\subset L^2([0,1];H^s((0,1)^d;\mathbb R^d))$. 

By the Calderón--Stein extension theorem (see  \cite{jones1981quasiconformal} for example), there exists a bounded linear extension operator
$E:H^s((0,1)^d)\to H^s(\mathbb R^d)$.
Let $\widetilde u(t,\cdot)\coloneqq E[u(t,\cdot)]\in H^s(\mathbb R^d)$.
Then $\widetilde u\in L^2([0,1];H^s(\mathbb R^d;\mathbb R^d))$. As a consequence, we have that $\mathsf{A}(\phi_{\text{KR}})<+\infty$.

Let $\widetilde\phi^t$ denote the $\mathcal D^s(\mathbb R^d)$-valued flow generated by $\widetilde u$.
Existence of such a flow for $u\in L^1_t H^s_x$ with $s>d/2+1$ is standard.
Moreover, for $x\in(0,1)^d$ the curve $t\mapsto \phi_t(x)$ solves $\dot X=\widetilde u(t,X)$ because
$\phi_t(x)\in(0,1)^d$ and $\widetilde u=u$ on $(0,1)^d$; by uniqueness of solutions, $\widetilde\phi^t(x)=\phi_t(x)$.
In particular, $\widetilde\phi^1|_{(0,1)^d}=\phi_{\mathrm{KR}}$, so $\phi_{\mathrm{KR}}\in\mathcal D_0^s((0,1)^d)$.

The same argument applies with $\phi_{\mathrm{KR}}$ replaced by $\phi$:
triangular strict monotonicity gives that $\phi$ is a $C^{s}$ diffeomorphism of $(0,1)^d$,
the convex interpolation $\phi_t=(1-t)\mathsf{id}+t\phi$ stays within triangular diffeomorphisms of $(0,1)^d$,
and extending the corresponding velocity to $\mathbb R^d$ yields a global $\mathcal D^s$-flow
whose time-$1$ map restricts to $\Phi$ on $(0,1)^d$.
Thus $\phi\in \mathcal D_0^s((0,1)^d)$.\qedhere
\end{proof}

\bibliographystyle{alpha}
\bibliography{refs}
	
	\bigskip

\begin{minipage}[t]{.5\textwidth}
{\footnotesize{\bf Borjan Geshkovski}\par
  Laboratoire Jacques-Louis Lions\par
  Inria \& Sorbonne Université\par
  4 Place Jussieu\par
  75005 Paris, France\par
 \par
  e-mail: \href{mailto:borjan@mit.edu}{\textcolor{blue}{\scriptsize borjan.geshkovski@inria.fr}}
  }
\end{minipage}% This must go next to `\end{minipage}`
\begin{minipage}[t]{.5\textwidth}
  {\footnotesize{\bf Domènec Ruiz-Balet}\par
  Departament de Matemàtiques i Informàtica\par
  Universitat de Barcelona\par
  Gran Via de les Corts Catalanes 585\par
  08007 Barcelona, Spain\par
 \par
  e-mail: \href{mailto:blank}{\textcolor{blue}{\scriptsize domenec.ruizibalet@ub.edu}}
  }
\end{minipage}

\end{document}